\theoremstyle{plain}
  \newtheorem{theorem}{\bf Theorem}
  \newtheorem{proposition}[theorem]{\bf Proposition}
  \newtheorem{lemma}[theorem]{\bf Lemma}
\theoremstyle{remark}
  \newtheorem{remark}{\bf Remark}
\newcommand{\RR}{\mathbb{R}} %Real numbers%
\newcommand{\NN}{\mathbb{N}} %Natural numbers%
\newcommand{\mb}{\mathbf} %sphere%
\newcommand{\SP}{\mathbb{S}} %sphere%
\newcommand{\norm}[1]{\lVert#1\rVert} %norma%
\newcommand{\supp}{\mathrm{supp \hspace{.1 mm}}} %support%
\newcommand{\jp}[1]{<#1>} %parentesis japonés%
\newcommand{\pv}{{\it P.V.}} %principal value%
\newcommand{\ds}[1]{d\sigma_{#1\eta}} %medida EWALD%
\begin{document}

%%%%%%%%%%%%%%%%%%%%%%%%%%%%%%%%%%%%%%%%%%%%%%%%%%%%%%%%%%%%%%%%%%%%%%%%%%%
 
\title[Recovery of  singularities  from Backscattering data]{Recovery of the singularities of a potential from Backscattering data in general dimension}
\author{Cristóbal J. Meroño }
\thanks{Cristóbal J.  Meroño, Departamento de Matemática e Informática aplicadas a la Ingeniería Civil y Naval, Universidad Politécnica de Madrid. E-mail: cj.merono@upm.es}
\date{\today}
%%%%%%%%%%%%%%%%%%%%%%%%%%%%%%%%%%%%%%%%%%%%%%%%%%%%%%%%%%%%%%%%%%%%%%%%%%%

\begin{abstract}
We prove that in dimension $n\ge 2$ the main singularities of a complex
potential $q$ having a certain a priori regularity are contained in the
Born approximation $q_{B}$ constructed from backscattering data. This
is archived using a new explicit formula for the multiple dispersion
operators in the Fourier transform side. We also show that ${q-q_{B}}$
can be up to one derivative more regular than $q$ in the Sobolev scale.
On the other hand, we construct counterexamples showing that in general
it is not possible to have more than one derivative gain, sometimes even
strictly less, depending on the a priori regularity of $q$.
\end{abstract}

%%%%%%%%%%%%%%%%%%%%%%%%%%%%%%%%%%%%%%%%%%%%%%%%%%%%%%%%%%%%%%%%%%%%%%%%%%%

\maketitle

%%%%%%%%%%%%%%%%%%%%%%%%%%%%%%%%%%%%%%%%%%%%%%%%%%%%%%%%%%%%%%%%%%%%%%%%%%%

\section{Introduction and main theorems}

The central problem in inverse scattering for the Schrödinger equation is to recover a potential $q(x)$, $x \in \RR^n$, from the scattering data, the so called scattering amplitude $u_\infty$. The scattering amplitude measures the far field response of the Hamiltonian $H:=-\Delta + q$ to incident plane waves. In backscattering, as the name suggest, only  the far field response appearing in the opposite direction of the incoming wave is considered, or in other words, only the waves scattered in the opposite direction of the incident wave (the echoes).
The usual reconstruction procedure is to construct the Born approximation $q_B$ of the potential, also an $\RR^n$ function as $q$, from the backscattering data contained in $u_\infty$.  This is the linear approximation to the inverse problem and it is widely used in applications. 

From a mathematical point of view an important question that is not completely answered is to establish how much information does the Born approximation contain about the actual potential $q$. This problem can be approached in different ways. One is to look for uniqueness results, that is, if  $q_B$ is enough to determine $q$ (this problem is still open, see next section for references). 
Motivated by the use of the Born approximation in applications, another approach is to ask how much and what kind of information about $q$ can be obtained just by  looking at $q_B$, that is, in a very immediate way. 
In this sense, in \cite{PSo} it was proposed  that the Born approximation must contain the leading singularities of $q$. Since then, this approach has received great amount of attention in different scattering problems. In the case of backscattering we mention, among others, \cite{GU,esw} for recovery of conormal singularities, \cite{OPS, Re} for recovery of singularities in $2$ dimensions, \cite{RV,RRe} in dimensions $2$ and $3$ and  \cite{BM09,BM09quad} in odd dimension  $n\ge3$.

The main objective of this work is to quantify  as exactly as possible how much more regular than $q$  can $q-q_B$ be in general, depending on the  dimension $n$, and the a priori regularity of the potential $q$ measured in the Sobolev scale. The potential can be complex valued. We provide positive and negative results, which answer this question almost completely, except for potentials in a certain range of the Sobolev scale where there is still a gap between them (see figure \ref{fig.dim2y3}).
To measure the regularity, we introduce  the fractional derivative operator $\jp{D}^{\alpha}$, $\alpha\in \RR $  given by the Fourier symbol $\jp{\xi}^\alpha$ with $\jp{x} = (1+|x|^2)^{1/2}$, and the  weighted Sobolev space $W_\delta^{\alpha,p}(\RR^n)$, $\delta \in \RR$,
$$W_\delta^{\alpha,p}(\RR^n) := \{ f\in \mathcal{S}'(\RR^n) : \norm{\jp{\cdot}^\delta \jp{D}^{\alpha} f}_{L^p(\RR^n)} <\infty\}. $$
We usually use the notation $L_\delta^p(\RR^n) := W^{0,p}_\delta(\RR^n)$ and $W^{\alpha,p}(\RR^n) := W^{\alpha,p}_0(\RR^n)$, also we say that $f\in W^{\alpha,p}_{loc}(\RR^n)$ if $\phi f\in W^{\alpha,p}(\RR^n)$ for every $\phi \in C^\infty_c(\RR^n)$. 

As we shall see in the next section, the Born approximations $q_B$ is related to the potential through the Born series expansion,
\begin{equation*}
 {q}_B  \sim  {q} + \sum_{j=2}^{\infty}{Q_{j}(q)}, 
 \end{equation*}
where $Q_{j}(q)$ are certain multilinear operators  describing the (multiple) dispersion of waves (we use the $\sim$ symbol to avoid claiming anything about convergence yet).  We will call the $Q_{2}$ operator the double dispersion operator of backscattering.  A key guiding principle is that in general  $Q_j(q)$ is expected to be smoother as $j$ grows. We can introduce now the main  theorems of this work. 
\begin{theorem} \label{teo:main1}
Let $n\ge 2$ and $\beta \ge 0$. Assume that $q-q_B \in W_{loc}^{\alpha,2}(\RR^n)$ for every $q\in W^{\beta,2}(\RR^n)$ compactly supported, radial, and real. Then $\alpha$ necessarily satisfies,
\begin{equation} \label{eq:mainrange1}
\alpha \le  \begin{cases}
 \,\, 2\beta - (n-4)/2, \quad  if \quad m \le \beta < (n-2)/2,\\
 \, \, \beta + 1, \hspace{18mm} if \quad  (n-2)/2 \le \beta<\infty ,
         \end{cases} 
\end{equation}
\begin{equation} \label{eq:m}
\text{where} \hspace{8mm} m = (n-4)/2 + 2/(n+1).
\end{equation}
\end{theorem}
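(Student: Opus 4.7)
The plan is to prove the bound \eqref{eq:mainrange1} by producing, for every $\alpha$ strictly larger than the right-hand side, a compactly supported, radial, real-valued $q\in W^{\beta,2}(\RR^n)$ whose associated $q-q_B$ fails to lie in $W^{\alpha,2}_{loc}(\RR^n)$. The starting point is the Born series
\[ q - q_B \sim -\sum_{j\ge 2} Q_j(q), \]
combined with the guiding principle stated in the introduction that $Q_j(q)$ becomes smoother as $j$ grows. So I would first establish that, in the relevant range of $\beta$, the tail $\sum_{j\ge 3} Q_j(q)$ is controlled in $W^{\alpha,2}_{loc}$ for every $\alpha$ up to the stated threshold. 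After this reduction the task becomes: find $q$ for which $Q_2(q)\notin W^{\alpha,2}_{loc}$ whenever $\alpha$ exceeds the bound in \eqref{eq:mainrange1}.

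For the counterexample I would rely on the new explicit Fourier-side expression for the double dispersion operator $Q_2$ that the abstract advertises. When $q$ is radial, the resulting formula reduces to a one-dimensional integral transform of the radial profile $F(\rho)=\hat q(\rho e)$. The natural candidates are potentials with power-type Fourier tails, for example $q(r)=\chi_{[a,b]}(r)$ on an annulus, or a radial cutoff of $|x|^{-s}$, tuned so that $\hat q(\xi)\sim |\xi|^{-\gamma}$ with $\gamma$ chosen so that $q$ sits exactly in $W^{\beta,2}$ and no better. Plugging such an $F$ into the Fourier-side formula and carrying out an asymptotic expansion for large $|\xi|$ should produce a leading-order behaviour $\widehat{Q_2(q)}(\xi)\sim c\,|\xi|^{-\mu}$ with $\mu$ precisely equal to $n/2$ plus the right-hand side of \eqref{eq:mainrange1}. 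The two regimes should arise from two different mechanisms: in the range $m\le \beta<(n-2)/2$ the dominant contribution should come from a pointwise product of two copies of $\hat q$ evaluated on the sphere $\SP^{n-1}$, producing a quadratic-in-$\beta$ decay; for $\beta\ge (n-2)/2$ the product becomes integrable enough that the kernel of $Q_2$ provides only a fixed one-derivative smoothing gain independent of $\beta$.

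The main obstacle will be the sharp asymptotic analysis of the oscillatory spherical integral defining $\widehat{Q_2(q)}(\xi)$ on radial inputs, in particular locating the stationary points, computing the leading symbol, and verifying it does not vanish for the chosen $F$ (which is where radiality and reality of $q$ become delicate, since one must avoid accidental cancellations). The threshold $m = (n-4)/2 + 2/(n+1)$ very likely corresponds to the value of $\beta$ at which the Fourier-side integral defining the relevant term in $Q_2$ barely converges absolutely, so that the asymptotic computation is justified; below this threshold the first regime of \eqref{eq:mainrange1} presumably requires a genuinely different construction, explaining why the theorem's lower bound on $\beta$ is precisely $m$. A secondary but nontrivial issue is to bound the tail $\sum_{j\ge 3}Q_j(q)$ in $W^{\alpha,2}_{loc}$ with the required gain; a natural device is a rescaling argument so that the counterexample $q$ may be taken with arbitrarily small $W^{\beta,2}$ norm, keeping the Born series within its radius of convergence while preserving the singular Fourier tail that drives the obstruction in $Q_2$.
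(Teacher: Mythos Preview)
Your overall architecture is right --- reduce to showing that the hypothesis forces $Q_2(q)\in W^{\alpha,2}_{loc}$ for all admissible $q$, then contradict this with an explicit radial counterexample --- but two of your proposed mechanisms do not work as stated.

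First, the reduction. You propose to control the tail $\sum_{j\ge 3}Q_j(q)$ directly in $W^{\alpha,2}_{loc}$ up to the threshold in \eqref{eq:mainrange1}. The available estimates do not give this: for instance with $n=4$, $\beta=1/2$ (well inside the first regime, since $m=2/5$), Theorem~\ref{teo.Qj} for $j=3$ only yields $\widetilde Q_3(q)\in W^{\alpha,2}$ for $\alpha<1/2$, whereas the threshold is $1$. Rescaling $q\mapsto\lambda q$ makes the full series converge for small $\lambda$, but does not improve the regularity class of any individual $Q_j(q)$. The paper's device is different: apply the hypothesis to $\lambda q$ for several values $\lambda_2,\dots,\lambda_{l-1}\in(0,1)$, use Proposition~\ref{prop:convergence} to place the tail $\sum_{j\ge l}\lambda^jQ_j(q)$ in $W^{\alpha,2}$ for $l$ large, and then invert the Vandermonde system $(\lambda_i^j)$ to conclude that each coefficient $\widetilde Q_j(q)$, $2\le j\le l-1$, lies in $W^{\alpha,2}_{loc}$ separately. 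Your rescaling idea is the first move in this argument, but the Vandermonde step is what actually isolates $Q_2$.

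Second, the counterexample. You plan a stationary-phase analysis of an oscillatory spherical integral and worry about accidental cancellation. The paper avoids all of this by choosing $g_\beta$ so that $\widehat{g_\beta}\ge 0$ (a cutoff times a Bessel kernel, with the cutoff of the form $\phi*\phi$). Then, since $\widehat{g_\beta}$ is real, the imaginary part of $\widehat{Q_2(g_\beta)}$ is exactly $\pi S_{r=1}(g_\beta)$, a spherical integral with \emph{non-negative} integrand; lower bounds of the form $\jp{\eta}^{-2\beta-2}$ and $\jp{\eta}^{-\beta-n/2-1}$ follow by restricting the integration to an equatorial band and to a small cap near the origin, respectively. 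No asymptotic expansion is needed, and the two regimes in \eqref{eq:mainrange1} come precisely from these two elementary restrictions.

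Finally, your reading of the threshold $m$ is off: it is not a convergence threshold for the $Q_2$ integral, but the lower bound on $\beta$ required by Proposition~\ref{prop:convergence} (the resolvent/product iteration) for the Born series tail to converge in the needed Sobolev space.
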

\begin{theorem}[Recovery of singularities] \label{teo:main2}
Let $n\ge 2$ and $\beta\ge 0$. Assume that $q \in W^{\beta,2}(\RR^n)$  is  compactly supported. Then $q-q_B \in W^{\alpha,2}(\RR^n)$,  modulo a $C^\infty$ function, if the following condition also holds
\begin{equation} \label{eq:mainrange}
\alpha < \begin{cases}
 \,\, 2\beta - (n-3)/2, \quad if \quad (n-3)/2 <\beta < (n-1)/2,\\
 \, \, \beta + 1, \hspace{18mm} if \quad (n-1)/2 \le \beta<\infty .
         \end{cases} 
\end{equation} 
\end{theorem}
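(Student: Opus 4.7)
The plan is to work directly from the Born series expansion
\begin{equation*}
q - q_B \;\sim\; -\sum_{j\ge 2} Q_j(q)
\end{equation*}
and estimate the Sobolev regularity of each multilinear term separately. Since the conclusion is only modulo a $C^\infty$ function, I may freely drop any Schwartz remainder and restrict attention to high frequencies throughout. The bulk of the work is then the double dispersion operator $Q_2(q)$, which governs the worst regularity; the guiding principle that further dispersion smooths more should let the higher-order terms $Q_j(q)$, $j\ge 3$, be placed in a strictly smaller (smoother) Sobolev space.

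For the double dispersion operator I would exploit the explicit Fourier-side formula for $Q_2$ announced in the abstract. Schematically this should write
\begin{equation*}
\widehat{Q_2(q)}(\xi) \;=\; \int \hat q(\eta)\,\hat q(\xi-\eta)\, K(\xi,\eta)\, d\mu(\eta),
\end{equation*}
with the measure $d\mu$ and kernel $K$ dictated by the backscattering geometry and the free resolvent. Plancherel then converts the claim $Q_2(q)\in W^{\alpha,2}_{loc}$ into a weighted bilinear $L^2$ estimate of spherical-average type. I would decompose $\hat q$ dyadically in frequency and split into an off-diagonal regime, where one frequency is much larger than the other, and a diagonal regime $|\eta|\sim|\xi-\eta|$. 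In the off-diagonal case, Cauchy-Schwarz in the small variable combined with $\hat q \in L^1$ (valid once $\beta > (n-1)/2$ by Sobolev embedding) produces the ceiling $\alpha<\beta+1$, namely one derivative of gain. In the diagonal case one must use the curvature of the backscattering manifold via a bilinear restriction-type estimate for the sphere; balancing the curvature gain against the $(n-3)/2$ loss inherent in such a restriction is what produces the lower bound $\alpha < 2\beta - (n-3)/2$ in the low-regularity window $(n-3)/2 < \beta < (n-1)/2$. The two regimes meet consistently at $\beta = (n-1)/2$ with common value $\alpha = (n+1)/2$.

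For the higher terms $Q_j(q)$ with $j\ge 3$, each extra factor in the multilinear integral introduces another convolution against the outgoing resolvent kernel of $-\Delta$, contributing extra spatial decay and at least a half-derivative of smoothing per step. Iterating the same Fourier-side bilinear analysis used for $Q_2$ should place $\sum_{j\ge 3} Q_j(q)$ in a Sobolev space strictly larger than the one dictated by \eqref{eq:mainrange}, so that only $Q_2$ is critical. Convergence of the tail series modulo a $C^\infty$ function should then follow from standard outgoing resolvent estimates for $-\Delta+q$ applied to compactly supported $q\in W^{\beta,2}(\RR^n)$.

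The main obstacle I anticipate is the sharp diagonal bilinear estimate for $Q_2$ in the low-regularity window. This is genuinely a harmonic analysis problem on the sphere: the best achievable $\alpha$ is controlled by the strength of the available bilinear restriction or spherical-average estimate, and the fact that \eqref{eq:mainrange} is an open condition reflects precisely where such estimates outperform the trivial Cauchy-Schwarz bound. Handling the transition at $\beta=(n-1)/2$ without losses and controlling the logarithmic losses from the dyadic summation look to be the most delicate technical points; moreover the gap with Theorem~\ref{teo:main1} in the range $\beta<m$ signals exactly where the current bilinear spherical estimates do not yet give a sharp answer.
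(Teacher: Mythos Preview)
Your overall architecture---reduce to the Born series, show each $\widetilde Q_j(q)$ lies in $W^{\alpha,2}$ with $Q_2$ critical, then sum the tail via resolvent estimates---matches the paper exactly (this is the short argument in Section~\ref{sec:2}, which reduces Theorem~\ref{teo:main2} to Theorem~\ref{teo.Qj} and Proposition~\ref{prop:convergence}). The gap is in your plan for $Q_2$. The explicit Fourier formula is not a single bilinear integral of the type you sketch: Proposition~\ref{prop:Q2struct} gives $\widehat{Q_2(q)}(\eta) = (i\pi\, d + P)\, S_r(q)(\eta)$, where $S_r$ is an integral over the Ewald sphere $\Gamma_r(\eta)$ and $P$ is a \emph{principal value} integral in the radial parameter $r\in(0,\infty)$. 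Your schematic captures only the $\delta$-piece $S_1(q)$. Handling the $P$-piece is a separate, nontrivial step; the paper does it by a general reduction (Proposition~\ref{prop:genPV}) that bounds $\|(i\pi d+P)F_r\|_{L^2_{\alpha'}}$ from uniform control of $\|F_r\|_{L^2_\alpha}$ together with $\|\partial_r F_r\|_{L^2_\tau}$ near $r=1$, at the cost of an arbitrarily small loss $\alpha'<\alpha$. It is this loss, not dyadic summation, that makes \eqref{eq:mainrange} an open condition.

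For the spherical piece the paper's route also differs from yours. It uses no dyadic decomposition and no bilinear restriction: instead it applies a weighted Cauchy--Schwarz using the elementary bound of Lemma~\ref{lemma.integrals} and then the $W^{1,2}$ trace theorem on spheres with radius-independent constant (Lemma~\ref{lemma.K.Q2}). The break at $\beta=(n-1)/2$ comes from the condition $(n-1)/2-\lambda\le\beta$ arising there, not from $\hat q\in L^1$; note $\hat q\in L^1$ actually requires $\beta>n/2$, so your off-diagonal heuristic is off by half a derivative. For $j\ge 3$ the paper iterates the same Ewald-sphere/P.V.\ structure (Proposition~\ref{prop:Qjstruct}, Proposition~\ref{prop:Srmulti}) and applies Proposition~\ref{prop:genPV} inductively in each radial variable, gaining a full derivative per extra factor in the range $\beta\ge(n-1)/2$, not a half.
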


See Figure \ref{fig.dim2y3} for a graphic representation of these results for $n=2$ and $n=4$. 

  {Theorem   \ref{teo:main1}} is the first result giving upper bounds for the
maximum possible regularity that can be obtained from the Born
approximation in backscattering. As we shall see, condition
  {(\ref{eq:mainrange1})} is a consequence of upper bounds for the
regularity of the $Q_{2}$ operator given by   {Theorem   \ref{teo:Q2count}}
below. The main reason we need $\beta \ge m$ and compact support is that
the convergence of the (high frequency) Born series in a Sobolev space
$W^{\alpha ,2}(\mathbb{R}^{n})$ is known only under these assumptions
(see   {Proposition   \ref{prop:convergence}} below). A remarkable consequence
of condition   {(\ref{eq:mainrange1})} is that for $\beta <(n-2)/2$ and
$n>2$ it is not possible to reach the expected gain of one derivative
over the regularity of $q$ (see   {Fig.   \ref{fig.dim2y3}} for the cases
$n=2,4$). In fact, we reach the minimum value of $2/(n+1)$ in
  {(\ref{eq:mainrange1})} for the upper bound of the derivative gain
$\alpha -\beta $ when $\beta =m$, which approaches $0$ as $n$ grows.

  {Theorem   \ref{teo:main2}} is a consequence of new estimates of the
$Q_{j}$ operators for $n\ge 2$ and $j\ge 2$ given in   {Theorem   \ref{teo.Qj}} below. As far as we know, these are the first results of
recovery of singularities for every dimension $n$ in backscattering. We
remark that   {Theorem   \ref{teo:main1}} implies that a one derivative gain
is the best possible result and so, the $1^{-}$ derivative gain in
  {(\ref{eq:mainrange})} is optimal except for the limiting case
$\alpha = \beta +1$.  In
\cite{RRe} it was shown that $q-q_{B}$ is in $W^{\alpha ,2}(
\mathbb{R}^{n})$ (modulo a $C^{\infty }$ function) with $n=2,3$ and
$\alpha <\beta +1/2$. Therefore, in dimension 2 we improve the previous
results for all $\beta \ge 0$ (see   {Fig.   \ref{fig.dim2y3}}), but in
dimension $3$ the result in \cite{RRe} is still the best result
for  low a priori regularity $0\le \beta < 1/2$. Also, a similar result to   {Theorem   \ref{teo:main2}} has been obtained in \cite[Corollary 4.8]{BM09} in odd dimension $n\ge 3$ using a certain modified  Born approximation.

Indeed,   {Theorems   \ref{teo:main1} and \ref{teo:main2}} leave a gap of up to $1/2$ derivative when $\max (m,0) \le \beta <(n-1)/2$ between the
positive and negative results. A similar situation is found in the fixed
angle and full data scattering problems, where analogous results to
  {Theorems   \ref{teo:main1} and \ref{teo:main2}} have been proved in
\cite{fix} (see \cite{BFRV10} for the positive results in the case
of full data scattering). In backscattering, this gap has been partially
closed in dimension $3$ by the mentioned result in \cite{RRe} and
in dimension $2$ in \cite{BFRV13}, where a uniform $1^{-}$
derivative gain has been obtained using a weaker regularity scale than
the Sobolev scale $W^{\alpha ,2}$. We will make more observations about
this problem in the final remarks.

 \begin{figure} 
 \centering
\begin{tikzpicture}[scale=1.5]
 \draw[<->] (3,0) node[below]{$\beta$} -- (0,0) --
(0,2.3) node[left]{$\alpha-\beta$};
  \draw [gray, very thin] (0.5,1) -- (0.5,0);
   \draw[dashed, semithick,red,   domain=0:0.5] plot (\x, {1});
    \draw  [semithick,blue](0,0.5) -- (0.5,1);
 \draw  [semithick,blue] (0.5,1) -- (3,1);
  \draw [dash dot ] (0,0.5) -- (3,.5);
  \node at (1,2) {$n=2$};
 \node [left] at (0,1) { \tiny $1$ };
 \node [left] at (0,0.5) { \tiny $\frac{1}{2}$ };
 \node [below] at (0.5,0) { \tiny $\frac{1}{2}$ };
\end{tikzpicture}
\begin{tikzpicture}[scale=1.5]
1. \draw[<->] (3,0) node[below]{$\beta$} -- (0,0) --
(0,2.3) node[left]{$\alpha-\beta$};
 \draw [gray, very thin] (1,1) -- (1,0);
  \draw [gray,very thin] (1.5,1) -- (1.5,0);
   \draw [gray,very thin] (0.4,0) -- (0.4,0.4);
 \draw[dashed,red,  semithick, domain=0:1.5] plot (\x, {min(1,\x)});
   \draw  [semithick,blue] (.5,0) -- (1.5,1);
 \draw  [semithick,blue] (1.5,1) -- (3,1);
  \node at (1,2) {$n=4$};
 \node [left] at (0,1) { \tiny $1$ };
 \node [ below] at (0.57,0) { \tiny $\frac{1}{2}$ };
  \node [below] at (1,0) { \tiny $1$ };
   \node [below] at (1.5,0) { \tiny $\frac{3}{2}$ };
      \node [below] at (0.33,0) { \tiny $m$ };
\end{tikzpicture}
   \caption{The (red) dashed line represents the limitation on the regularity gain given in Theorem  \ref{teo:main1} for $q-q_B$, and the solid (blue)  line represents the positive results given in Theorem \ref{teo:main2}. When $n=2$,  the dot dashed line represents the previously known positive results of \cite{RRe}.}
   
   \label{fig.dim2y3}
\end{figure}
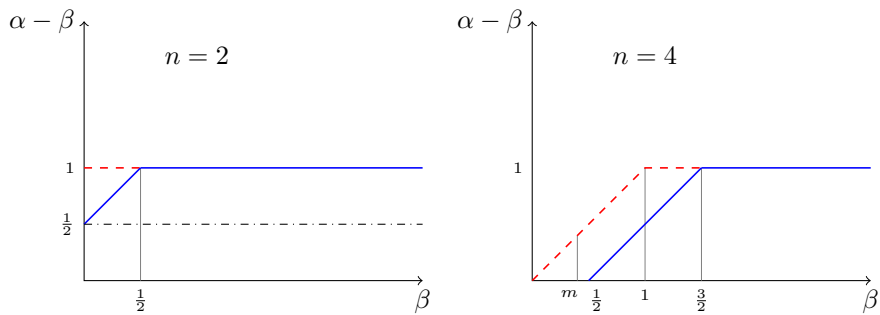

We introduce now the Sobolev estimates of the $Q_j$ operators. Consider a constant $C_0 \ge 1$ and  let $0 \le \chi(\xi) \le 1$, $\xi\in \mathbb{R}^n$, be a smooth cut-off function such that
 \begin{equation} \label{eq:cutoff1}
 {\chi}(\xi)= 1 \;\; \text{if} \;\; |\xi|>2C_0 \;\; \text{and} \;\; \chi(\xi)=0 \;\; \text{if} \;\; |\xi|<C_0.
 \end{equation}
We define the operator $\widetilde{Q}_{j}$ by the relation
%
%e4 #&#
\begin{equation}
\label{eq.cutoff}
\widehat{\widetilde{Q}_{j}(q)}(\xi ) := \chi (\xi )
\widehat{{Q}_{j}(q)}(\xi ),
\end{equation}
so that $Q_j(q)$ differs from $\widetilde Q_j(q)$ in a smooth function. $Q_j(q)$ will be introduced in the following section, see   (\ref{eq:Qjraw}).
 \begin{theorem}  \label{teo.Qj}
Let $n\ge 2$ and $j \ge 2$.  Assume that $0 \le \beta \le \infty$ and that the following condition also holds
 \begin{equation*}
 \alpha < \begin{cases}
 \,\, \beta + (j-1)(\beta - (n-3)/2), \quad if \quad  (n-3)/2 <\beta < (n-1)/2,\\
 \, \, \beta + (j-1), \hspace{27mm} if \quad (n-1)/2 \le \beta<\infty .
         \end{cases} 
\end{equation*}
Then for $q\in W_2^{\beta,2}(\RR^n)$ and $j=2$ we have the estimate 
\begin{equation} \label{eq:estimateQ2}
\norm{\widetilde {Q}_2(q)}_{{W}^{\alpha,2}}\le C  \norm{q}_{W^{\beta,2}_2}^2.
\end{equation}
Otherwise if $j\ge 3$ and $q\in W_4^{\beta,2}(\RR^n)$  we have that
\begin{equation} \label{eq:estimateQj}
\norm{\widetilde {Q}_j(q)}_{{W}^{\alpha,2}}\le C \norm{q}_{W_4^{\beta,2}}^j.
\end{equation}
\end{theorem}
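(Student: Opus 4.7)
The plan is to start from the new explicit representation of the multiple dispersion operator on the Fourier side announced in the abstract (and introduced below). Such a formula should write $\widehat{Q_j(q)}(\xi)$ as an integral over $j-1$ intermediate frequency variables of a product $\hat q(\xi-\eta_1)\hat q(\eta_1-\eta_2)\cdots\hat q(\eta_{j-1})$ against a kernel $K_j$ encoding the composition of free resolvents at the backscattering energy $|\xi|^2/4$. On the support of $\chi$ one has $|\xi|>C_0$, so the high-energy behaviour of $K_j$ is explicit: it decays polynomially in $|\xi|$ and concentrates on affine spheres in each $\eta_i$. Via Plancherel, \eqref{eq:estimateQ2}--\eqref{eq:estimateQj} reduce to a weighted $L^2$ bound for this multilinear form with weight $\jp{\xi}^\alpha$.

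For $j=2$ the heart of the matter is a bilinear inequality, and I would split the analysis into the two regimes appearing in the hypothesis on $\alpha$. In the high-regularity regime $\beta\ge(n-1)/2$, $H^\beta$ is essentially an algebra, so after distributing Sobolev weights $\jp{\xi-\eta}^\beta\jp{\eta}^\beta$ and applying Cauchy--Schwarz in $\eta$, the remaining integral of $|K_2|^2\jp{\xi-\eta}^{-2\beta}\jp{\eta}^{-2\beta}$ in $\eta$ is $O(\jp{\xi}^{2\alpha-2\beta-2})$ uniformly in $\xi$, which gives the one-derivative gain $\alpha<\beta+1$. In the intermediate regime $(n-3)/2<\beta<(n-1)/2$ the algebra bound is not available and one must instead exploit the sphere localisation of $K_2$ through a weighted restriction/extension argument (equivalently, a weighted Agmon--Kato--Kuroda bound for the free resolvent), which produces an extra gain of $\beta-(n-3)/2$ derivatives and hence $\alpha<2\beta-(n-3)/2$. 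The weight $\jp{x}^2$ on $q$ is precisely what makes $\hat q$ regular enough for its pointwise restriction to affine spheres in $\RR^n$ to be controlled.

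For $j\ge3$ the estimate \eqref{eq:estimateQj} should follow by induction on $j$. The explicit formula exhibits $\widetilde Q_{j+1}(q)$ as one additional dispersion integral applied to the product of $\widetilde Q_j(q)$ with another copy of $q$, so each step contributes either one full derivative of smoothing (when $\beta\ge(n-1)/2$) or $\beta-(n-3)/2$ (in the intermediate regime), reproducing both slopes $j-1$ and $(j-1)(\beta-(n-3)/2)$ in the statement. The stronger weight $\jp{x}^4$, compared to $\jp{x}^2$ for $j=2$, is what is needed to keep the iterated $\eta$-integrals absolutely convergent after absorbing the polynomial weights accumulated along the chain of resolvent kernels.

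The main difficulty is the bilinear bound for $j=2$ in the intermediate regime $(n-3)/2<\beta<(n-1)/2$, where pointwise product estimates fail and one must combine Cauchy--Schwarz in the $\eta$-variable with the sharp sphere-localisation of the dispersion kernel to squeeze out the $\beta-(n-3)/2$ extra derivatives. Once this bilinear estimate is in hand, the inductive step for $j\ge3$ should be essentially structural, reducing to careful bookkeeping of the spatial weights on $q$.
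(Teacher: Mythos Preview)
Your proposal captures the spirit of the spherical-integral estimates but misses the central analytic obstacle: the resolvent kernel $(-|\eta|^2+k^2+i0)^{-1}$ is not a function that ``decays polynomially and concentrates on spheres'' --- it splits as $i\pi\delta(|\eta|-k)$ plus a genuine principal value $\mathrm{P.V.}\,(k^2-|\eta|^2)^{-1}$. The paper's explicit formula (Proposition~\ref{prop:Qjstruct}) makes this decomposition manifest, writing $\widehat{Q_j(q)}(\eta)=\prod_{i=1}^{j-1}(i\pi d_i+P_i)\,S_{j,\mathbf r}(q)(\eta)$. Your Cauchy--Schwarz and trace/restriction argument is indeed how the paper bounds the spherical operator $S_{j,\mathbf r}$ (Lemmas~\ref{lemma.K.Q2} and \ref{lemma:Kj}), and your two-regime split matches exactly. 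But nothing in your outline controls the $P_i$ terms, and this is where the paper introduces its main new device, Proposition~\ref{prop:genPV}: to bound $(i\pi d+P)F_r$ in $L^2_{\alpha'}$ one needs not only $\|F_r\|_{L^2_\alpha}$ but also $\|\partial_r F_r\|_{L^2_\tau}$ near $r=1$. The $r$-derivative falls on $\widehat q$ and costs one extra power of $\langle x\rangle$; this, not absolute convergence of iterated integrals, is the reason for the weight $\langle x\rangle^2$ when $j=2$, and for $\langle x\rangle^4$ when $j\ge 3$ (two derivatives from the iterated trace theorem, two more from the $\partial_{r_i}$ needed to handle each $P_i$).

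Your inductive scheme on $j$ is also not structurally correct as stated: $\widetilde Q_{j+1}(q)$ is \emph{not} one extra dispersion applied to $q\cdot\widetilde Q_j(q)$, because all Ewald spheres $\Gamma_{r_i}(\eta)$ share the same center $\eta/2$ and radius $r_i|\eta|/2$ determined by the outermost variable $\eta$, not by an intermediate one. The paper instead fixes $j$ and inducts on the number $k$ of $P_i$ operators already applied (the $T_{j,k}$ hierarchy in the proof of Theorem~\ref{teo.Qj}), feeding Proposition~\ref{prop:genPV} at each step with the mixed-derivative bounds of Proposition~\ref{prop:Srmulti}. Without a mechanism to handle the principal values, your argument would only bound the purely spherical piece $\prod_i(i\pi d_i)S_{j,\mathbf r}$, which is the easy part.
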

To prove this result, we first obtain  an explicit formula  expressing the Fourier transform of ${\widetilde{Q}_{j}(q)}$ as certain  principal value distributions acting on the radial parameters of an integral operator over the Ewald spheres (the spherical operator), see    {Proposition    \ref{prop:Qjstruct}} below. In the proof of {Theorem    \ref{teo.Qj}} we use trace estimates to control the spherical
integrals, and a new method to reduce the estimate of the principal
value distributions to certain estimates of the spherical operators. One advantage of these techniques is that with the same effort we can prove estimates
for general dimension $n\ge 2$. In odd dimension, using very different
techniques, similar estimates for certain operators related to the $Q_{j}$  operators have been obtained in \cite[Theorem 1.1]{BM09quad} and
\cite[Theorem 1.2]{BM09}, for compactly
supported potentials. We mention that the estimate of the $
\widetilde{Q}_{3}$ operator for $n=3$ given in \cite{RRe} is still
the best estimate in the range $0 \le \beta <1/4$.

Finally, we give an upper bound for the regularity of the double dispersion operator, which constrains the amount  of  regularity  of $q$ that one  can expect to recover from the  Born approximation, as stated in     {Theorem    \ref{teo:main1}}.
\begin{theorem}  \label{teo:Q2count}
Let $0< \beta < \infty$ and  assume that $ Q_2(q) \in {W}_{loc}^{\alpha,2}(\RR^n)$ for every potential  ${q\in W^{\beta,2}(\RR^n)}$ radial, real  and compactly supported. Then $\alpha$ necessarily satisfies
\begin{equation} \label{eq.remarkable} 
\alpha \le  \begin{cases}
 \,\, 2\beta - (n-4)/2, \quad if \hspace{21mm} 0 \le \beta < (n-2)/2,\\
 \, \, \beta + 1, \hspace{18mm} if \hspace{6mm} (n-2)/2 \le \beta<\infty ,
 \end{cases} 
\end{equation}
\end{theorem}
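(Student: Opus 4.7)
\medskip

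\noindent\textbf{Proof plan.} I would argue by contradiction, upgrading the qualitative hypothesis to a quantitative estimate via closed graph. The space $X$ of real, radial, compactly supported elements of $W^{\beta,2}(\RR^n)$ is a closed subspace; if $Q_2$ sends $X$ into $W^{\alpha,2}_{loc}(\RR^n)$, then closed graph applied to the associated continuous bilinear form, together with a fixed cutoff $\varphi\in C^\infty_c(\RR^n)$, yields a constant $C$ with
\[
\norm{\varphi\, Q_2(q)}_{W^{\alpha,2}} \le C\,\norm{q}_{W^{\beta,2}}^2 \qquad \text{for all } q\in X.
\]
The task is to falsify this estimate whenever $\alpha$ exceeds the threshold of (\ref{eq.remarkable}), by constructing two families of test potentials $\{q_N\}\subset X$, one per regime.

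The starting point is the explicit formula from Proposition \ref{prop:Qjstruct}, which represents $\widehat{\widetilde Q_2(q)}(\xi)$ as a principal value in a one-dimensional radial parameter of a spherical operator integrating $\hat q\otimes \hat q$ over the Ewald sphere attached to $\xi$. For radial $q$, the radial symmetry of $\hat q$ collapses the double spherical integral, through a change of variables adapted to the Ewald sphere, into an essentially one-dimensional integral weighted by an explicit geometric Jacobian. This Jacobian carries a specific power of $|\xi|$, reflecting the degenerating $(n-2)$-dimensional measure of the intersection of the Ewald sphere with a radial frequency shell in the high-frequency limit; this is the geometric mechanism producing the exponent $(n-4)/2$ in (\ref{eq.remarkable}).

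For the first regime $0<\beta<(n-2)/2$, take $q_N\in X$ whose Fourier transform is a smooth radial bump on the shell $\{|\eta|\sim N\}$, amplitude-normalised so that $\norm{q_N}_{W^{\beta,2}}\sim 1$ (real-valuedness, radiality and compact support are enforced by symmetrisation and smooth truncation in $x$, chosen so as not to perturb the high-frequency analysis). Inserting $q_N$ into the reduced formula and testing at $|\xi|\sim N$, the leading contribution to $\widehat{Q_2(q_N)}(\xi)$ comes from the tangential intersection of the Ewald sphere with the support shell; a stationary-phase-type expansion of the one-dimensional principal value integral delivers a lower bound $|\widehat{Q_2(q_N)}(\xi)|\gtrsim N^{-2\beta+(n-4)/2}$ on a $\xi$-set of measure $\sim N^{n-1}$. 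Putting this into the $W^{\alpha,2}$ norm yields growth of order $N^{\alpha-2\beta+(n-4)/2}$, unbounded unless $\alpha\le 2\beta-(n-4)/2$. For the complementary regime $\beta\ge (n-2)/2$, where the geometric degeneracy is no longer binding, I would use a different radial test family—say, potentials whose Fourier profile carries a prescribed radial singularity at a fixed scale rather than being frequency-localised—for which direct inspection of the formula of Proposition \ref{prop:Qjstruct} exhibits the generic one-derivative ceiling of quadratic-in-$q$ operators of this form, giving $\alpha\le \beta+1$.

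The main technical obstacle is the third step: verifying that the principal value does not produce an accidental cancellation of the intended leading term for the real, radial $q_N$, whose Fourier transforms have extensive symmetry. One must argue that the geometric Jacobian contributes coherently and track its power of $|\xi|$ carefully, since this is precisely what balances against the Sobolev weight $\jp{\xi}^\alpha$ and fixes the exponent $2\beta-(n-4)/2$. Once the non-vanishing main term is extracted, the matching of powers of $N$ against the volume $N^{n-1}$ of the output shell is mechanical and produces the bounds in (\ref{eq.remarkable}).
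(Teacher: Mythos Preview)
Your strategy differs from the paper's in two substantive ways, and the first of these hides a gap that the paper's approach avoids altogether.

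First, the closed graph detour is unnecessary. The hypothesis is ``$Q_2(q)\in W^{\alpha,2}_{loc}$ for \emph{every} admissible $q$'', so to defeat an $\alpha$ above the threshold it suffices to exhibit a \emph{single} admissible $q$ with $Q_2(q)\notin W^{\alpha,2}_{loc}$. The paper does exactly this: it fixes one explicit radial, real, compactly supported $g_\beta$ (essentially a Bessel kernel times a smooth cutoff, arranged so that $\widehat{g_\beta}\ge 0$ and $\widehat{g_\beta}(\xi)\sim\jp{\xi}^{-n/2-\beta}$ for large $\xi$), and shows directly that $Q_2(g_\beta)\notin W^{\alpha,2}_{loc}$ whenever $\alpha$ exceeds the bound. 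No uniform estimate, no family $\{q_N\}$, no closed graph.

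Second, and more importantly, the paper bypasses completely the cancellation problem you flag as ``the main technical obstacle''. The key observation is this: if $\widehat q$ is real, then by \eqref{eq:Q2} and \eqref{eq:Sr} one has
\[
\widehat{Q_2(q)}(\eta)=P(S_r(q))(\eta)+i\pi\,S_{1}(q)(\eta),
\]
with both $P(S_r(q))$ and $S_1(q)$ real. Hence the imaginary part of $\widehat{Q_2(q)}$ is \emph{exactly} the spherical integral $\pi S_1(q)$, with no principal value at all. If in addition $\widehat q\ge 0$, the integrand of $S_1(q)$ is nonnegative, and a lower bound for $S_1(g_\beta)(\eta)$ follows from crude geometric bounds on two subsets of the Ewald sphere (an equatorial band, and a small cap near the origin), giving $S_1(g_\beta)(\eta)\gtrsim\max\big(\jp{\eta}^{-2\beta-2},\jp{\eta}^{-\beta-n/2-1}\big)$. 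Since the imaginary part alone already fails to lie in $W^{\alpha,2}_{loc}$ for $\alpha$ above the threshold, the real (principal value) part is irrelevant.

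Your plan, by contrast, attempts to extract a lower bound on $|\widehat{Q_2(q_N)}|$ through a ``stationary-phase-type expansion of the one-dimensional principal value integral''. You do not supply that expansion, and for frequency-localised real radial $q_N$ the integrand of $S_r$ is highly oscillatory in $r$ near $r=1$; ruling out cancellation there is genuinely delicate. There is also tension in your construction: compact support in $x$ together with sharp frequency localisation on a shell $|\eta|\sim N$ are incompatible, and the truncation you invoke spreads the Fourier support in a way that must be tracked quantitatively. None of this is insurmountable, but as written the argument is incomplete precisely at its hardest point, whereas the paper's choice of a single potential with \emph{nonnegative} Fourier transform turns the lower bound into a two-line computation.
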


The paper is structured as follows. In section    \ref{sec:2} we introduce
with more detail the backscattering problem, and we show how to deduce
    {Theorems    \ref{teo:main1} and \ref{teo:main2}} respectively from     {Theorems    \ref{teo:Q2count} and \ref{teo.Qj}}. Section    \ref{sec.PV} is dedicated to introducing the main result used for the estimate of the principal
value operators, and in section    \ref{sec.Q2L2} we estimate the spherical
part $\widetilde{Q}_{2}(q)$. In section    \ref{sec:Qj} we study the
general $\widetilde{Q}_{j}$ operators and we finish the proof of     {Theorem    \ref{teo.Qj}}. In section    \ref{sec:implicitQj} we give the estimates
necessary to show the convergence of the Born series in Sobolev spaces,
and section    \ref{sec:5} is devoted to proving     {Theorem    \ref{teo:Q2count}}.

%%%%%%%%%%%%%%%%%%%%%%%%%%%%%%%%%%%%%%%%%%%%%%%%%%%%%%%%%%%%%%%%%%%%%%%%
%%%%%%%%%%%%%%%%%%%%%%%%%%%%%%%%%%%%%%%%%%%%%%%%%%%%%%%%%%%%%%%%%%%%%%%%
%%%%%%%%%%%%%%%%%%%%%%%%%%%%%%%%%%%%%%%%%%%%%%%%%%%%%%%%%%%%%%%%%%%%%%%%

\section{Convergence of the Born series in  Sobolev spaces} \label{sec:2}

Let us introduce the backscattering inverse problem more rigorously (see, for example, \cite[chapter V]{eskin} for an introduction to Schrödinger scattering theory). We follow a similar exposition to that of \cite{fix}.

Consider  a scattering solution $u_s(k,\theta,x)$, $k\in(0,\infty)$, $\theta\in \SP^{n-1}$,  of the stationary Schrödinger equation satisfying
\begin{equation} \label{eq.int.1}
\begin{cases}
(-\Delta + q -k^2)u=0 \\
u(x) = e^{i k \theta \cdot x} + u_s(k,\theta,x) \\
\lim_{|x| \to \infty} (\frac{\partial u_s}{\partial r} - iku_s)(x) = o(|x|^{-(n-1)/2}),
\end{cases}
\end{equation}
where the last line is the outgoing Sommerfeld radiation condition (necessary for uniqueness).  If $q$ is compactly supported, a solution $u_s$ of (\ref{eq.int.1}) has the following asymptotic behavior when $|x| \to \infty$
$$u_s(k,\theta,x) =  C |x|^{-(n-1)/2}k^{(n-3)/2} e^{ik|x|} u_\infty(k,\theta,x/|x|) + o(|x|^{-(n-1)/2}) ,$$
 for a certain function $u_\infty(k,\theta,\theta')$, $k\in (0,\infty)$, $\theta,\theta' \in \SP^{n-1}$. As mentioned in the introduction, $u_\infty$ is the so called scattering amplitude or far field pattern, and is given by the expression
\begin{equation} \label{eq.int.2}
u_\infty (k,\theta,\theta') = \int_{\RR^n} e^{-ik\theta'\cdot y} q(y) u(y) \, dy,
\end{equation}
where is important to notice that $u$ depends also on $k$ and $\theta$ (for a proof of this fact when $q\in C^\infty_c(\RR^n)$ see for example \cite{notasR}). 

Applying the outgoing resolvent of the Laplacian $R_k$ in the first line of (\ref{eq.int.1}), where 
\begin{equation} \label{eq:resolvent1}
\widehat{R_k(f)}(\xi)  = (-|\xi|^2+k^2 + i0)^{-1}\widehat{f}(\xi), 
\end{equation}
we obtain the Lippmann-Schwinger integral equation
\begin{equation} \label{eq.int.3}
u_s=R_k(qe^{ik\theta \cdot (\cdot)}) + R_k(qu_s(k,\theta,\cdot)).
\end{equation}
The existence and uniqueness of scattering solutions of (\ref{eq.int.1}) follows from a priori estimates for the resolvent operator $R_k$ and the previous integral equation (\ref{eq.int.3}). In the case of real potentials, this can be shown with  the help of Fredholm theory  for $k >0$, see for example \cite{notasR}. Otherwise, since the norm of the operator $T(f)= R_k(qf)$  decays to zero as $k \to \infty$ in appropriate function spaces, we can also use a Neumann series expansion in (\ref{eq.int.3}) which will be convergent for  $k >k_0$ (in general $k_0 \ge 0$  will depend on some a priori bound of $q$). For our purposes it is enough to consider $q \in L^r(\RR^n)$, $r>n/2$ and compactly supported. Notice that by the Sobolev embedding this is satisfied if $q\in W^{\beta,2}(\RR^n)$ with $\beta>(n-4)/2$.  See \cite[p. 511]{BFRV10} for more details and references.

We can introduce now the inverse backscattering problem. If we insert (\ref{eq.int.3}) in (\ref{eq.int.2}),  we can expand  the Lippmann-Schwinger equation in a Neumann series, as we mentioned before.  Then we obtain the Born series expansion relating  the scattering amplitude and the Fourier transform of the potential:
\begin{align}
\nonumber u_\infty (k,\theta,\theta') = \widehat{q}(\xi) + \sum^l_{j=2}\int_{\RR^n} e^{-ik \theta'\cdot y} (qR_k))^{j-1}(q(\cdot)e^{ik\theta \cdot (\cdot)} )(y) \,dy\\
 \label{eq.int.4} + \int_{\RR^n} e^{-ik \theta'\cdot y} (qR_k)^{l-1}(q(\cdot)u_s(k,\theta, \cdot) )(y) \,dy,
\end{align}
where $\xi = k(\theta'-\theta)$ and the last is the error term. Since we are considering complex potentials,  $u_\infty (k,\theta,\theta')$ is not defined for $k \le k_0$ as we have seen. Therefore we also have to ask  $k>k_0$ in (\ref{eq.int.4}). 

The problem of determining $q$ from the knowledge of the scattering amplitude is formally overdetermined in the sense that  the data $u_\infty(k,\theta,\theta')$ is described by $2n-1$ variables, while the unknown potential $q(x)$ has only $n$. We avoid the overdetermination by reducing to the backscattering data, assuming only  knowledge of $u_\infty(k,\theta,-\theta)$, for all $k > k_0$ and $\theta\in \SP^{n-1}$.  For backscattering data the problem is formally well determined, and the Born approximation $q_{B}$ is defined by the identity,
\begin{equation}  \label{eq:bornF}
 \widehat{q_{B}}(\xi):= u_\infty(k,\theta,-\theta), \hspace{4mm} \text{where} \hspace{3mm} \xi= -2k\theta.
 \end{equation}
Since $u_\infty(k,\theta,-\theta)$ is not defined for $k\le k_0$, from now on we consider  that $q_B(x)$ is defined modulo a $C^\infty$ function. 

By (\ref{eq:bornF}), the condition  $k>k_0$ is equivalent to asking $|\xi|>2k_0$. Therefore, using the cut-off introduced before (\ref{eq.cutoff}) with $C_0>2k_0$, and assuming convergence of the series, we can write  (\ref{eq.int.4})   as follows
\begin{equation} \label{eq.int.6}
 \chi(\xi)\widehat{q}_B(\xi) = \chi(\xi)\widehat{q}(\xi) + \sum_{j=2}^{\infty}\widehat{ \widetilde Q_{j}(q)}(\xi) ,
 \end{equation}
 where $\widetilde Q$ was defined in  (\ref{eq.cutoff}) and
\begin{equation} \label{eq:Qjraw} 
 \widehat{Q_{j}(q)}(\xi) =\int_{\RR^n} e^{ik \theta\cdot y} (qR_k)^{j-1}(q(\cdot)e^{ik\theta \cdot (\cdot)} )(y) \,dy,
 \end{equation}
again with  $\xi= -2k\theta$. 

We examine now the question of the convergence in Sobolev spaces of the series (\ref{eq.int.6}), an essential step in the proof of Theorems \ref{teo:main1} and \ref{teo:main2}.
\begin{proposition} \label{prop:convergence}
Let  $n\ge 2$, $j\ge 2$ and $\max(0,m) \le \beta<\infty$, where $m$ was defined in $(\ref{eq:m})$. If $q\in W^{\beta,2}(\RR^n)$  is compactly supported in $B_\rho$, the ball of radius $\rho$, then $\widetilde{Q}_j(q) \in W^{\alpha,2}(\RR^n)$ 
 if $\alpha<\alpha_j$, with
\begin{equation} \label{eq:alfaj}
\alpha_j = \beta + (j-1) - \frac{n}{2} - \frac{(n-1)}{2} (j-2) \max{\left( 0,\frac{1}{2} - \frac{\beta}{n} \right )}  .
\end{equation}
Moreover, for every $\alpha<\alpha_l$, $l\ge 2$ the series $\sum_{j=l}^{\infty} \widetilde Q_j(q),$
converges absolutely in $W^{\alpha,2}(\RR^n)$ provided we take ${C_0=C\norm{q}_{W^{\beta,2}}^{1/\varepsilon}}$ in $(\ref{eq:cutoff1})$ and $(\ref{eq.cutoff})$  for a large constant $C=C(n,\alpha,\beta,\rho)$ and a certain $\varepsilon =\varepsilon(n,\beta)>0$.
\end{proposition}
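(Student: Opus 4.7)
The plan is to prove a single-term bound of the form
\begin{equation*}
\|\widetilde Q_j(q)\|_{W^{\alpha,2}}\le K^j\,C_0^{-\varepsilon(j-1)}\,\|q\|_{W^{\beta,2}}^j
\end{equation*}
for every $\alpha<\alpha_j$, with $\varepsilon=\varepsilon(n,\beta)>0$ and $K=K(n,\alpha,\beta,\rho)$, and then sum absolutely by choosing $C_0$ large relative to $\|q\|_{W^{\beta,2}}$. Unfolding (\ref{eq:Qjraw}) with $\xi=-2k\theta$ and the iterates $v_0=qe^{ik\theta\cdot}$, $v_m=R_k(qv_{m-1})$ gives $\widehat{Q_j(q)}(\xi)=\langle qv_{j-1},e^{-ik\theta\cdot}\rangle$, so $\widehat{\widetilde Q_j(q)}(-2k\theta)$ is essentially the restriction of $\widehat{qv_{j-1}^{k,\theta}}$ to a sphere of radius $k$. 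Writing the Sobolev norm in polar coordinates reduces matters to controlling the $L^2(\SP^{n-1}_\theta)$ norm of this spherical trace, times appropriate powers of $k$, and integrating over $k>C_0/2$.

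The inductive step is a mapping property of $qR_k$ on weighted Sobolev spaces. In the high regularity regime $\beta\ge n/2$, $W^{\beta,2}$ restricted to $B_\rho$ is a multiplication algebra, and the Agmon--H\"ormander estimate $\|R_k f\|_{L^2_{-\delta}}\lesssim k^{-1}\|f\|_{L^2_\delta}$ (in its Sobolev refinement) contributes one derivative and a negative power of $k$ per iteration, which accounts for the linear part $\beta+(j-1)-n/2$ of (\ref{eq:alfaj}). For $\max(0,m)\le\beta<n/2$ the space $W^{\beta,2}$ is no longer an algebra, and one must combine the endpoint Kenig--Ruiz--Sogge uniform Sobolev estimate $\|R_k f\|_{L^{2(n+1)/(n-1)}}\lesssim k^{-2/(n+1)}\|f\|_{L^{2(n+1)/(n+3)}}$ with Sobolev embeddings of $W^{\beta,2}$ into the corresponding $L^r$ spaces. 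Optimizing the Lebesgue and Sobolev indices at each step produces the per-iteration loss $(n-1)\max(0,1/2-\beta/n)/2$ in (\ref{eq:alfaj}), and the threshold $\beta\ge m=(n-4)/2+2/(n+1)$ is exactly the point where the Sobolev embedding $W^{\beta,2}\hookrightarrow L^{2(n+1)/(n+3)}$ just becomes available, so that the Kenig--Ruiz--Sogge endpoint is reachable. Finally, an Agmon-type trace theorem on the Ewald sphere of radius $k$ converts the weighted spatial bound on $qv_{j-1}$ into an $L^2(\SP^{n-1}_\theta)$ bound on $\widehat{Q_j(q)}(-2k\theta)$, completing the estimate at fixed $k$.

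Tracking the negative powers of $k$ produced by each resolvent gives an extra factor $k^{-(j-1)\varepsilon}$ beyond what is needed for the single-term estimate, so the cutoff $k>C_0/2$ delivers the advertised $C_0^{-\varepsilon(j-1)}$. Choosing $C_0=C'\|q\|_{W^{\beta,2}}^{1/\varepsilon}$ with $C'$ large enough makes $K\,C_0^{-\varepsilon}\|q\|_{W^{\beta,2}}\le 1/2$, which turns $\sum_{j\ge l}\|\widetilde Q_j(q)\|_{W^{\alpha,2}}$ into a geometric series and yields absolute convergence in $W^{\alpha,2}$ for any $\alpha<\alpha_l$, since $\alpha_j$ is non-decreasing in $j$ in this range. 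The main technical obstacle is the bookkeeping of Sobolev index, spatial weight, and Lebesgue exponent along the iteration in the regime $m\le\beta<n/2$: any step at which the uniform Kenig--Ruiz--Sogge estimate fails to apply would break the chain of negative powers of $k$ and destroy the convergence argument. The compact support of $q$ is essential throughout, since it allows free passage between weighted $L^2_\delta$ and unweighted Sobolev norms via cutoffs without loss of regularity.
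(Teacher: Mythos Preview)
Your outline is broadly correct and follows the same strategy as the paper: rewrite $\widehat{Q_j(q)}(-2k\theta)$ via the conjugated resolvent $R_\theta$, iterate a combination of resolvent mapping bounds (the paper's Lemma~\ref{lemma.Res}, which packages both the Agmon--H\"ormander and the $L^{2(n+1)/(n+3)}\to L^{2(n+1)/(n-1)}$ endpoints you cite) with Sobolev product estimates (the paper uses Zolesio's theorem, Lemma~\ref{lemma.zol}, rather than separate algebra/embedding arguments), track the accumulated negative power of $k$, and integrate over $k>C_0$.

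The one genuine technical difference is how the $\beta$ derivatives on $\widehat{Q_j(q)}$ are extracted. You propose an Agmon-type spherical trace theorem at the last step; the paper instead uses the identity $(-\Delta)^{\beta/2}e^{i2k\theta\cdot x}=(2k)^\beta e^{i2k\theta\cdot x}$ at the \emph{first} step to pull a factor $k^{-\beta}$ out, converts the remaining integral to an $L^1$ norm, and then splits via the pointwise product inequality $\|(-\Delta)^{\beta/2}(fg)\|_{L^1}\le C\|f\|_{W^{\beta,2}}\|g\|_{W^{\beta,2}}$ (Lemma~\ref{lemma.LapFracL1}). This fractional-Laplacian trick is precisely the improvement the paper claims over \cite{RV,RRe}: it lets the entire iteration run in $W^{\beta,\cdot}_\delta$ spaces with a fixed Sobolev index $\beta$, so the bookkeeping reduces to choosing the Lebesgue exponents $t_\ell,r_\ell$ alone, and one reads off $\alpha_j$ directly from $\gamma_j=-(j-1)+\frac{n-1}{2}\sum_\ell(1/t_\ell-1/r_{\ell+1})$. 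Your trace-theorem route should also reach~\eqref{eq:alfaj}, but you would have to track both Sobolev and Lebesgue indices simultaneously through the chain, which is exactly the ``bookkeeping obstacle'' you flag; the paper's device sidesteps it.
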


 This proposition  improves the original result of \cite[Proposition 4.3]{RV} given for the range  $m  \le \beta \le n/2$ and later extended in \cite{RRe} for $\beta\ge n/2$.  We have used certain properties of the fractional Laplacian $(-\Delta)^{s}$ to improve the value of  $\alpha_j$ in dimension $n$ (see section \ref{sec:implicitQj}).  It also improves the regularity gain given in \cite{RRe} for the $\widetilde Q_4$ operator with $n=3$. This would allow to obtain the results of recovery of singularities in that paper  without the very technical proof to estimate $\widetilde Q_4(q)$.
  
Using Proposition \ref{prop:convergence}, we can reduce the proof of Theorem \ref{teo:main2}  to proving  Theorem  \ref{teo.Qj}.

\begin{proof}[Proof of Theorem $\ref{teo:main2}$]
Taking the inverse Fourier transform of (\ref{eq.int.6}),  we can write {\it modulo a $C^\infty$ function}
\begin{equation}  \label{eq:bornseries}
q(x) -q_B(x) = - \sum_{j=2}^\infty \widetilde{Q}_j(x).
\end{equation}
Consider $\alpha$ and $\beta \ge 0$ satisfying condition (\ref{eq:mainrange}). Observe that by Theorem  \ref{teo.Qj}, we  have  $\widetilde Q_j(q) \in W^{\alpha,2}(\RR^n)$, so we just need to study the convergence of the series.  But, since the value of $\alpha_j$ in (\ref{eq:alfaj}) grows linearly with $j$,  we can always find an integer $l$ such that $\alpha_l > \alpha$. As a consequence, if $q$ has compact support, by Proposition \ref{prop:convergence}, the series $\sum_{j=l}^\infty \widetilde{Q}_j(q)(x)$ converges in $W^{\alpha,2}(\RR^n)$.  
\end{proof}

Similarly, the proof of Theorem \ref{teo:main1}  can be reduced to  Theorem  \ref{teo:Q2count} and Proposition \ref{prop:convergence}.

\begin{proof}[Proof of Theorem $\ref{teo:main1}$]

Take $\alpha \ge 0$ and assume that we have that $q-q_{B} \in W^{
\alpha ,2}_{loc}(\mathbb{R}^{n})$ for every compactly supported, real
and radial potential $q\in W^{\beta ,2}(\mathbb{R}^{n})$. We are going
to prove that then necessarily $Q_{j}(q) \in W^{\alpha ,2}_{loc}(
\mathbb{R}^{n})$ for all $2 \le j < \infty$.

We denote by ${ q}_{B}(\lambda )$ the Born approximation of the
potential $q(\lambda ) = \lambda q$, where $\lambda \in (0,1)$. By the
multilinearity of the $\widetilde{Q}_{j}$ operators, the Born series
{(\ref{eq:bornseries})} for $q(\lambda )$ becomes
\begin{equation}
\label{eq:algebra}
{\lambda q}- { q}_{B}(\lambda ) = - \sum _{j=2}^{l-1} \lambda ^{j}{\widetilde{Q}
_{j}(q)} -\sum _{j=l}^{\infty } \lambda ^{j}{\widetilde{Q}_{j}(q)},
\end{equation}
modulo a $C^{\infty }$ function (which depends on $\lambda $).

By Proposition \ref{prop:convergence} we have that if $m\le \beta <
\infty $, we can choose $l$ in (\ref{eq:algebra}) such that
$\alpha < \alpha _{l}$. Then $\widetilde{Q}_j(q) \in W^{\alpha,2}(\mathbb{R}^n)$ for $l \le j <\infty$, and the series $\sum _{j=l}^{\infty } \lambda ^{j}
{\widetilde{Q}_{j}(q)} $ will converge absolutely in $W^{\alpha ,2}(
\mathbb{R}^{n})$. 
Since by hypothesis we have that ${\lambda q}- { q}_{B}(\lambda )$ is
also a $W^{\alpha ,2}(\mathbb{R}^{n})$ function, by
(\ref{eq:algebra}) we have
\begin{equation*}
\sum _{j=2}^{l-1} \lambda ^{j} \widetilde{Q}_{j}(q)  \in W_{loc}^{\alpha,2}(\mathbb{R}^n).
\end{equation*}
But, by  choosing  $\lambda_i \in (0,1)$ for every $2 \le  i \le l-1$ such that $\det(\lambda_i^j) \neq 0$ (this is always possible, since it is a Vandermonde determinant), we obtain that, for all $2\le j \le l-1$,  $\widetilde{Q}_j(q) \in W^{\alpha,2}_{loc}(\mathbb{R}^n)$. 
To finish, notice that, by condition (\ref{eq.remarkable}) of
Theorem \ref{teo:Q2count}, we know that this implies that $\alpha $ must
be in the range given in (\ref{eq:mainrange1}).
\end{proof}

As we have mentioned in the introduction, the question of uniqueness of
the inverse scattering problem for backscattering data is still open.
In \cite{RU} it has been proved for $n=3$ that two potentials
differing in a finite number of spherical harmonics with radial
coefficients must be identical if they have the same backscattering
data. The question of uniqueness for small potentials was studied in
\cite{prosser4}. Generic uniqueness and uniqueness for small potentials
has been obtained in \cite{er3,st92} for dimensions 2 and 3
and in \cite{lagerg} for $n=3$. Similar results have been obtained
in odd dimension $n\ge 3$ in \cite{Uh01,BM09}, and in even dimension  in \cite{Wa}. See \cite{RU} for references about the uniqueness problem and \cite{er3} for more results concerning the regularity of the backscattering map.

The recovery of singularities has been studied in other inverse
scattering problems. The case of full data has been studied in
\cite{PSo,PSe,PSS} (real potentials) and \cite{BFRV10,fix}
(complex potentials) and the case of fixed angle data in
\cite{ser} in dimension $2$, and \cite{R} in dimension
$n\ge 2$. The regularity gain has been improved recently in
\cite{fix}. Analogous problems have been formulated  to
study the recovery of singularities of live loads in Navier elasticity, see
\cite{BFPRM1,BFPRM2}.

  Before going to the next section, we want to highlight the following property of Sobolev norms that we will use frequently in this work.
\begin{remark}
\label{remark:Sob}
We have that $W^{\beta ,2}_{\delta }\subset W^{\beta ',2}_{\delta '}$
if $\beta \ge \beta '$ and $\delta \ge \delta '$. This follows from the
equivalence
\begin{equation*}
\lVert <\cdot >^{\delta }<D>^{\beta } f\rVert _{L^{2}(\mathbb{R}^{n})}
\sim \lVert <D>^{\beta } <\cdot >^{\delta }f\rVert _{L^{2}(\mathbb{R}
^{n})},
\end{equation*}
and Plancherel theorem, see for example \cite[Definition 30.2.2]{HormanderIV}.
\end{remark}

%%%%%%%%%%%%%%%%%%%%%%%%%%%%%%%%%%%%%%%%%%%%%%%%%%%%%%%%%%%%%%%%%%%%%%%%%%
%%%%%%%%%%%%%%%%%%%%%%%%%%%%%%%%%%%%%%%%%%%%%%%%%%%%%%%%%%%%%%%%%%%%%%%%%%
%%%%%%%%%%%%%%%%%%%%%%%%%%%%%%%%%%%%%%%%%%%%%%%%%%%%%%%%%%%%%%%%%%%%%%%%%%

\section{From the spherical integral to the P.V. integral} \label{sec.PV}

As we have explained in the introduction, the $Q_{j}$ operators that
appear in the Born series expansion of $q$ can be expressed in terms of certain spherical integrals, and principal value distributions acting on them. The usual strategy is to estimate the spherical part and then try to extend this estimate to the other terms. This is generally a very long and technical process that must be repeated case by case if the dimension or the value
of $j$ is changed (see \cite{RV,Re,RRe,BFRV13}). In this section we give a general method to reduce the estimate of the principal value distributions to the estimate of the spherical integrals

First, we define the following distributions. Let $f\in C^\infty_c((0,\infty))$, we put
\begin{equation}  \label{eq:dandP}
d(f) = \int_0^\infty \delta(1-r)f(r) \, dr, \hspace{3mm} \text{and} \hspace{3mm}P(f) = \pv \int_0^\infty \frac{1}{1-r} f(r) \, dr,
\end{equation}
where $\delta$ denotes  the Dirac delta distribution, as usual.
\begin{proposition} \label{prop:Q2struct}
 Let $r\in (0,\infty)$ and consider the modified Ewald spheres defined by the equation
\begin{equation} \label{eq:ewald}
\Gamma_r(\eta):=\{\xi\in\RR^n: |\xi-\eta/2| = r|\eta|/2\},
\end{equation}
(see Figure $\ref{fig.ewald1}$ in section $\ref{sec:5}$ below). Then we have that
\begin{equation} \label{eq:Q2} 
\widehat{Q_2(q)}(\eta) = (i \pi d + P) S_{r}(q)(\eta),
\end{equation}
where, if we denote by  $\sigma_{r\eta}$ the Lebesgue measure of $\Gamma_r(\eta)$,
\begin{equation} \label{eq:Sr}
{S_r(q)}(\eta) := \frac{2}{|\eta|(1+r)}  \int_{\Gamma_{r}(\eta)}\widehat{q}(\xi) \widehat{q}(\eta-\xi) \, \ds{r}(\xi).
\end{equation}
\end{proposition}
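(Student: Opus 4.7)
The plan is to unwind the Fourier-side definition of $Q_{2}(q)$ in (\ref{eq:Qjraw}), reorganise the resulting integral around the foliation $\{\Gamma_{r}(\eta)\}_{r>0}$ of modified Ewald spheres, and then apply the Plemelj--Sokhotski formula to extract the $(i\pi d+P)$ splitting.

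First I would insert the backscattering relation $\eta=-2k\theta$, so that $k=|\eta|/2$ and $k\theta=-\eta/2$, which turns the phases $e^{ik\theta\cdot y}$ in (\ref{eq:Qjraw}) into $e^{-i\eta\cdot y/2}$. Using the shift identity $\widehat{q(\cdot)e^{-i\eta\cdot(\cdot)/2}}(\zeta)=\widehat{q}(\zeta+\eta/2)$, Parseval, and the Fourier-multiplier formula (\ref{eq:resolvent1}) for $R_{k}$, the double spatial integral in (\ref{eq:Qjraw}) with $j=2$ collapses to
\begin{equation*}
\widehat{Q_{2}(q)}(\eta)=c_{n}\int_{\RR^{n}}\frac{\widehat{q}(\zeta+\eta/2)\,\widehat{q}(\eta/2-\zeta)}{-|\zeta|^{2}+k^{2}+i0}\,d\zeta,
\end{equation*}
for a dimensional constant $c_{n}$. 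The translation $\xi=\zeta+\eta/2$ replaces the numerator by $\widehat{q}(\xi)\widehat{q}(\eta-\xi)$ and moves the pole of the denominator onto the sphere $|\xi-\eta/2|=k$, which is exactly $\Gamma_{1}(\eta)$ from (\ref{eq:ewald}).

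Next I would pass to polar coordinates adapted to this geometry, writing $\xi=\eta/2+tv$ with $v\in\SP^{n-1}$, $t>0$, and rescaling $t=r|\eta|/2$ so that the level sets of the radial variable are precisely the modified Ewald spheres $\Gamma_{r}(\eta)$. The Jacobian $d\xi=(r|\eta|/2)^{n-1}(|\eta|/2)\,dr\,d\sigma(v)$ combines with the spherical part of the integrand into the surface integral $\int_{\Gamma_{r}(\eta)}\widehat{q}(\xi)\widehat{q}(\eta-\xi)\,\ds{r}(\xi)$, while the denominator factors as
\begin{equation*}
-|\xi-\eta/2|^{2}+k^{2}=\tfrac{|\eta|^{2}}{4}(1-r)(1+r).
\end{equation*}
Since $1+r>0$ on $(0,\infty)$, the splitting $\frac{1}{(1-r)(1+r)+i0}=\frac{1}{1+r}\cdot\frac{1}{(1-r)+i0}$ is harmless, and after cancellation the $|\eta|$-powers and the factor $1/(1+r)$ reassemble exactly into the prefactor $\frac{2}{|\eta|(1+r)}$ appearing in the definition of $S_{r}(q)(\eta)$. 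The identity thus reduces to
\begin{equation*}
\widehat{Q_{2}(q)}(\eta)=\int_{0}^{\infty}\frac{S_{r}(q)(\eta)}{(1-r)+i0}\,dr,
\end{equation*}
and applying the Plemelj--Sokhotski formula $(x+i0)^{-1}=\pv\,x^{-1}\mp i\pi\delta(x)$ to the radial factor splits this into the principal value piece $P\,S_{r}(q)(\eta)$ and the delta piece $\pm i\pi\,S_{r=1}(q)(\eta)=\pm i\pi\,d\,S_{r}(q)(\eta)$; matching constants and sign conventions then gives (\ref{eq:Q2}).

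The main obstacle is rigorous rather than algebraic: the multiplier $(-|\zeta|^{2}+k^{2}+i0)^{-1}$ is not integrable across the Ewald sphere, so Parseval and the subsequent Fubini exchanges need justification. I would first carry out the computation for $q\in\Sc(\RR^{n})$ with an $i\epsilon$-regularisation of $R_{k}$ where everything converges absolutely, and then pass to the limit $\epsilon\downarrow 0$ on both sides of the identity. Density in the Sobolev classes of interest, where by trace estimates $\widehat{q}$ is already well defined on the relevant spheres, extends the formula to the $q$ considered in Theorems \ref{teo.Qj} and \ref{teo:Q2count}. A minor additional check is that $S_{r}(q)(\eta)$ is smooth in $r$ on $(0,\infty)$ for $q$ of compact support (so $\widehat{q}$ is real-analytic), which ensures that $P$ and $d$ are meaningfully defined on $r\mapsto S_{r}(q)(\eta)$.
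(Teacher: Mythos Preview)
Your proposal is correct and follows essentially the same route as the paper, which (deferring to the $j=2$ case of Proposition~\ref{prop:Qjstruct}) invokes the Plemelj--Sokhotski decomposition~(\ref{eq:resAlberto}) of $R_k$ and then passes to spherical coordinates with the substitutions $t=rk$ and $rk\omega=-\xi-k\theta$ to land on the Ewald-sphere foliation. The only organisational difference is that you carry the $+i0$ regularisation through and apply Plemelj--Sokhotski at the end in the radial variable $r$, whereas the paper applies it upfront at the level of the resolvent; the changes of variables and the identification of $S_r(q)$ are otherwise identical.
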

We omit the proof since it is just the case $j=2$ of Proposition \ref{prop:Qjstruct} below.

Motivated by the previous proposition, we introduce the following result to control the the principal value term. It will also simplify a
great amount of work when studying the $Q_{j}$ operators with
$j>2$.

\begin{proposition} \label{prop:genPV}
Let $1 \le p <\infty$ and $\alpha \in  \mathbb{R}$. Assume   that there is a $0<\delta<1$,  $\tau \in  \mathbb{R}$, $\gamma>0$ and $M>0$ such that the  one parameter family of $L^1_{loc}( \mathbb{R}^{n})$ functions $\{F_r\}_{r\in (0,\infty)}$  satisfies
\begin{enumerate}
\item For $a.e.$ $\eta \in \mathbb{R}^{n}$ fixed, $\partial_r F_r(\eta)$ is a continuous function for all $r\in (1-\delta,1+\delta)$, and in the same interval satisfies  the estimate
\begin{equation} \label{eq:pv:Fr1}
\lVert \partial_r F_r \rVert_{L_\tau^p} \le M .
\end{equation}
\item For every $r\in (0,\infty)$,
\begin{equation} \label{eq:pv:Fr2}
\lVert F_r\rVert_{L_\alpha^p} \le (1+r)^{-\gamma}M .
\end{equation}
\end{enumerate}
Then we have that
\begin{equation} \label{eq:pv:Fr3}
\lVert(i\pi d + P) F_r \rVert_{ L^p_{\alpha'}}\le  C_2 M,
\end{equation}
for every $\alpha'<\alpha$ and $C_2=C_2(\delta,\alpha,\alpha',\tau,p,\gamma)$.
\end{proposition}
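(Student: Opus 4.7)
The plan is to split the operator $(i\pi d + P)$ in the variable $r$ near $r=1$ via a smooth symmetric cutoff, handling the part away from $r=1$ directly and the part near $r=1$ via a derivative cancellation followed by interpolation. Fix $\phi\in C^\infty_c(\mathbb{R})$ with $\phi(1-s)=\phi(1+s)$, $\phi\equiv 1$ on $(1-\delta/2,1+\delta/2)$, and support in $(1-\delta,1+\delta)$, where $\delta$ is the one from hypothesis (1). Decompose
\[
(i\pi d + P)F_r(\eta) = i\pi F_1(\eta) + \int (1-\phi(r))\frac{F_r(\eta)}{1-r}\,dr + \pv\int \phi(r)\frac{F_r(\eta)}{1-r}\,dr.
\]
Condition (\ref{eq:pv:Fr2}) at $r=1$ bounds the Dirac term in $L^p_\alpha$ by $2^{-\gamma}M$, and Minkowski's integral inequality together with (\ref{eq:pv:Fr2}) bounds the ``far'' integral in $L^p_\alpha$ by $M\int(1-\phi(r))(1+r)^{-\gamma}|1-r|^{-1}\,dr$, which is finite because the integrand is regular at $r=1$ and decays like $r^{-\gamma-1}$ at infinity (using $\gamma>0$).

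For the principal value piece, the symmetry of $\phi$ about $r=1$ gives $\pv\int \phi(r)(1-r)^{-1}\,dr=0$, so I can subtract the value at $r=1$ at no cost:
\[
\pv\int \phi(r)\frac{F_r(\eta)}{1-r}\,dr = \int \phi(r)\frac{F_r(\eta)-F_1(\eta)}{1-r}\,dr,
\]
which is now an ordinary integral since hypothesis (1) makes $r\mapsto F_r(\eta)$ a $C^1$ function on $(1-\delta,1+\delta)$. The fundamental theorem of calculus gives $(F_r-F_1)/(1-r) = -\int_0^1\partial_s F_{1+t(r-1)}\,dt$, so two applications of Minkowski's inequality together with (\ref{eq:pv:Fr1}) yield an $L^p_\tau$ bound of $2\delta M$ on the near piece.

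It remains to reconcile the $L^p_\tau$ bound on the near piece with the $L^p_\alpha$ bounds on the other two pieces to obtain $L^p_{\alpha'}$ for arbitrary $\alpha' < \alpha$. If $\tau \ge \alpha$ the embedding $L^p_\tau \subset L^p_\alpha$ closes the argument, so assume $\tau < \alpha$. For $g_r := (F_r - F_1)/(1-r)$ I have both $\|g_r\|_{L^p_\tau} \le M$ (from above, uniform for $r \in \mathrm{supp}\,\phi$) and $\|g_r\|_{L^p_\alpha} \le 2M/|1-r|$ from the triangle inequality and (\ref{eq:pv:Fr2}). The pointwise Hölder interpolation $\|g_r\|_{L^p_{\tilde\alpha}} \le \|g_r\|_{L^p_\tau}^\theta \|g_r\|_{L^p_\alpha}^{1-\theta}$ with $\tilde\alpha = \theta\tau + (1-\theta)\alpha$ and $\theta \in (0,1)$ then gives $\|g_r\|_{L^p_{\tilde\alpha}} \le CM|1-r|^{-(1-\theta)}$, which is integrable in $r$ over $\mathrm{supp}\,\phi$. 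A final Minkowski in $r$ promotes this to an $L^p_{\tilde\alpha}$ bound on the near piece; choosing $\theta$ small enough that $\tilde\alpha > \alpha'$ and using $L^p_{\tilde\alpha}\subset L^p_{\alpha'}$ completes the proof. The main obstacle I foresee is precisely this final interpolation: as $\alpha' \uparrow \alpha$ one is forced to take $\theta \downarrow 0$, and the integral $\int|1-r|^{-(1-\theta)}\,dr \sim \theta^{-1}$ blows up, which structurally explains both the strict inequality $\alpha'<\alpha$ and the quantitative dependence of $C_2$ on $\alpha - \alpha'$.
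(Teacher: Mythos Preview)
Your proof is correct and takes a genuinely different route from the paper's. The paper introduces an $\eta$-dependent threshold $\delta_\eta = \delta\langle\eta\rangle^{-s}$ and splits the principal value into three pieces $P_A$ (over $|1-r|\le\delta_\eta$), $P_B$ (over $\delta_\eta<|1-r|<\delta$), and $P_C$ (over $|1-r|\ge\delta$). The near piece $P_A$ is handled with the fundamental theorem of calculus together with the inequality $\langle\eta\rangle^{s/2}\le \delta^{1/2}|1-r|^{-1/2}$ to trade weight for powers of $|1-r|$, and the intermediate piece $P_B$ is controlled by a dyadic decomposition in $|1-r|$; the $\varepsilon$-loss that forces $\alpha'<\alpha$ appears when summing the dyadic levels.

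Your argument avoids both the $\eta$-dependent cutoff and the dyadic step. You use a single fixed smooth cutoff, obtain two bounds on $g_r=(F_r-F_1)/(1-r)$ --- one in $L^p_\tau$ uniform in $r$, one in $L^p_\alpha$ with the explicit blow-up $|1-r|^{-1}$ --- and then interpolate via H\"older on the weight to get $\|g_r\|_{L^p_{\tilde\alpha}}\lesssim M|1-r|^{-(1-\theta)}$, which is integrable. This is cleaner and more conceptual; the paper's approach is more hands-on and perhaps makes the mechanism (trading frequency weight for $r$-localization) more explicit. Both approaches produce the same qualitative dependence of $C_2$ on $\alpha-\alpha'$ (your $\theta^{-1}$ corresponds to the paper's divergent dyadic sum $\sum 2^{-j\varepsilon/s}$), so neither yields a sharper constant.
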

Notice that the value of $\tau $ in  {(\ref{eq:pv:Fr1})} does not have any
influence on the value of $\alpha'$ in  {(\ref{eq:pv:Fr3})}.
\begin{proof} By the definition of $d$ in  {(\ref{eq:dandP})}, we clearly have that  $\lVert d \left( F_r \right) \rVert_{ L^p_{\alpha}}\le 2^{-\gamma}M$ follows directly  putting $r=1$ in  {(\ref{eq:pv:Fr2})}. Therefore it remains to estimate  the term 
\begin{equation*}  
{P(F_r)}(\eta) = \mathit{P.V.} \int_0^\infty \frac{1}{1-r} {F_r}(\eta) \,dr,
\end{equation*}
in $L^p_\alpha$. For some $s \ge 0$ that will be chosen later, set
\begin{equation} \label{eq:genPV.sing.1} 
\delta_\eta := {\delta}{ <\eta>^{-s}}.
\end{equation}
 Since for any $a>0$, $P.V. \int_{|1-r|<a} \frac{dr}{1-r} = 0$, we have that
\begin{align}
\nonumber P(F_r)(\eta) =& \\ 
\nonumber =& \int_{|1-r| \le   \delta_\eta} \frac {F_r - F_1}{1-r}(\eta) \,dr +\int_{\delta_\eta <|1-r|<\delta} \frac{F_r(\eta)}{1-r}dr + \int_{\delta \le |1-r|} \frac{F_r(\eta)}{1-r}\,dr \\
  \label{eq:noPV}  :=& \, P_{A}(\eta) + P_{B}(\eta) + P_{C}(\eta),
\end{align}
 where  the $\mathit{P.V.}$ is not necessary any more, since we can cancel the singularity in the denominator thanks to the fact that $F_r(\eta)$ is $C^1$ in $(1-\delta,1+\delta)$ by the first condition. Applying Minkowski's integral inequality and estimate (\ref{eq:pv:Fr2}), we obtain that
\begin{equation}\label{eq:PV1:B}
\|P_{C} \|_{L^p_\alpha} \le \int_{ \delta<|1-r|} \frac{\|F_r\|_{L^p_\alpha} }{|1-r|} \, dr \le C(\delta,\gamma) M.
\end{equation} 
By the fundamental theorem of calculus we have
\[ \frac{F_r(\eta)-F_1(\eta)}{1-r} = - \int^{1}_{0} \partial_u F_{u} (\eta) \big |_{u = u(t) } \, dt ,\]
where $u(t) = (r-1)t +1 $.  Then the inequality  
\[ 
<\eta>^{s/2} \, \le \delta^{1/2}|1-r|^{-1/2}, 
\] 
that holds in the region $|1-r|<\delta_\eta$, yields
\begin{align*}
&\lVert P_A \rVert_{ L^p_{\alpha}} = \left( \int_{\mathbb{R}^n} <\eta>^{p\alpha}
\left |\int_{|1-r| <  \delta_\eta}  \int^{1}_{0} \partial_u F_{u}(\eta)\big |_{u=u(t)} \, dt \,dr\right|^p d\eta \right)^{1/p} \\
 &\le\delta^{1/2} \left( \int_{\mathbb{R}^n} <\eta>^{p(\alpha -s/2)}
\left (\int_{|1-r| <  \delta}  |1-r|^{-1/2} \int^{1}_{0} \left | \partial_u F_{u}(\eta)\big |_{u=u(t)} \right | \, dt \,dr\right)^p d\eta \right)^{1/p} \\
 &\le \delta^{1/2} \int_{|1-r| <  \delta}  \int^{1}_{0} |1-r|^{-1/2} \lVert \partial_u F_{u}\big |_{u=u(t)}\rVert_{L^p_{\alpha-s/2}} \, dt \,dr ,
\end{align*}
where to get the last line we have used  Minkowski's inequality.
 
 We have two cases. If in (\ref{eq:pv:Fr2}) and (\ref{eq:pv:Fr1}) we have $\alpha \le \tau$ we can choose $s = 0$, otherwise, if $\alpha>\tau$, we choose $s$ such that $\alpha-s/2 = \tau$.
In both cases by (\ref{eq:pv:Fr1}) we obtain 
\begin{align} 
\nonumber \lVert  P_A \rVert_{ L^p_\alpha} &\le  \delta^{1/2} \int_{|1-r| <  \delta} \int^{1}_{0}  |1-r|^{-1/2} \lVert  \partial_u F_{u}\big|_{u=u(t)} \rVert_{L^p_{\tau}} \, dt \,dr  \\
 \label{eq:PV2:B} &\le 4\delta M.
\end{align}

To finish we need estimate $P_B$  which is non-zero when $s>0$, that is when $\alpha>\tau$.
We set $N(\eta) = - \log_2(\delta <\eta>^{-s})$, and consider the next dyadic decomposition,
\begin{align*}
P_{B}(\eta) :&=  \int_{B } \frac{F_r(\eta)}{1-r}  \,dr \\
&= \sum_{0 \le j<N(\eta)}  \int_{\{2^{-(j+1)}<|1-r|<2^{-j}\}} \chi_{\{|1-r|<\delta_\eta \}}(r)  \frac{F_r(\eta)}{1-r}  \,dr .
\end{align*}
 If $j=0,1,...,N(\eta)$, for $\eta$ fixed, the definition of $N(\eta)$ implies that ${2^j \le  <\eta>^{s}}/\delta$, therefore
 \begin{equation}  \label{eq:diadic1:B}
|P_{B}(\eta)|\le \sum_{j=0}^\infty 2^{j+1} \chi_{(\delta 2^{j},\infty)}(<\eta>^{s})\int_{|1-r|<2^{-j}} |{F_r}(\eta)| \,dr .
\end{equation}
 But observe that in the last line we have an expression of the kind
\[{P^{\lambda}}(\eta) := \chi_{(\delta{\lambda^{-1}},\infty)} (<\eta>^{s}) \int_{|1-r|\le \lambda} |{F_r}(\eta)| \, dr,\]
 with $0<\lambda\le 1$. Computing its $L^p_{\alpha-\varepsilon}$ norm when $\varepsilon>0$ and applying Minkowski's integral inequality we obtain
\begin{equation}  \label{eq:diadic2:B}
\lVert  P^{\lambda}\rVert_{ L^p_{\alpha-\varepsilon}}  \le {\lambda}^{\varepsilon/s}  \int_{\{|1-r|\le {\lambda} \}} \lVert  F_r \rVert_{ L^p_\alpha} \,dr \le {\lambda}^{1+\varepsilon/s}  M,
\end{equation}
where we have used  estimate (\ref{eq:pv:Fr2}), and that in the region where the characteristic function does not vanish we have that $<\eta>^{-\varepsilon} \le \delta^{-\varepsilon/s} \lambda^{\varepsilon/s}$. Hence, taking the $L^p_{\alpha'}$ norm of (\ref{eq:diadic1:B}) and applying estimate (\ref{eq:diadic2:B}) with $\varepsilon= \alpha-\alpha'$ yields
\begin{align} 
\nonumber \lVert  P_B \rVert_{ L^p_{\alpha'}} \le 2 \sum^{\infty}_{j=0}  2^{j} \lVert   {P^{2^{-j}}} \rVert_{ L^p_{\alpha'}} 
&\le    2\delta^{-\varepsilon/s} M \sum^{\infty}_{j=0}  2^{-j \varepsilon/s}\\
\label{eq:PV3:B}&\le   C(\delta,\alpha,\alpha',\tau,p) M.
\end{align}
 Observe that this is the first time we need the strict inequality $\alpha'<\alpha$ in the statement of the theorem. Therefore since $P(F_r) = P_A+P_B+P_C$ we conclude the proof putting together estimates (\ref{eq:PV1:B}), (\ref{eq:PV2:B}) and (\ref{eq:PV3:B}). 
\end{proof}
In our case usually the family of functions $F_r$ is given by a multilinear  operator over the potentials, as it can be seen in  \eqref{eq:Q2} where $F_r= S_r(q)$.

%%%%%%%%%%%%%%%%%%%%%%%%%%%%%%%%%%%%%%%%%%%%%%%%%%%%%%%%%%%%%%%%%%%%
%%%%%%%%%%%%%%%%%%%%%%%%%%%%%%%%%%%%%%%%%%%%%%%%%%%%%%%%%%%%%%%%%%%%
%%%%%%%%%%%%%%%%%%%%%%%%%%%%%%%%%%%%%%%%%%%%%%%%%%%%%%%%%%%%%%%%%%%%

\section{Sobolev  estimates for the double dispersion operator} \label{sec.Q2L2}

In this section, we study in detail the spherical operator $S_{r}$ of
the double dispersion operator $\widetilde{Q}_{2}$ in order to prove
{Theorem \ref{teo.Qj}} for $j=2$. This section will serve to illustrate
the approach used to obtain in Section \ref{sec:Qj} the main estimates
of the spherical operators related to the $\widetilde{Q}_{j}$ operators.

For notational convenience we define the operator
\begin{equation*}
{\widetilde{S}_{r}(q)}(\eta ) :=\chi (\eta ) {S_{r}(q)}(\eta ).
\end{equation*}
Then, multiplying both sides of equation {(\ref{eq:Q2})} by the smooth
cut-off $\chi (\eta )$ we get
\begin{equation}
\label{eq:Q2tilde}
\widehat{\widetilde{Q}_{2}(q)}(\eta ) = (i \pi d + P) \widetilde{S}
_{r}(q)(\eta ).
\end{equation}
Hence, the main idea to estimate the $\widetilde{Q}_{2}$ operator is to
apply {Proposition \ref{prop:genPV}} to the particular case $F_{r}= \widetilde{S}_{r}(q)$. We begin with the necessary estimates for
$ \widetilde{S}_{r}(q)$.
  \begin{lemma} \label{lemma:SrQ2}
Let $n\ge 2$ and $q\in W_1^{\beta,2}(\RR^n)$ with $\beta\ge 0$. Then the estimate
\begin{equation*}
\norm{\widetilde S_r(q)}_{ L^2_\alpha}\le C{(1+r)^{-\gamma} } \norm{q}_{W_1^{\beta,2}}^2, 
\end{equation*}
holds  when
\begin{equation} \label{eq:range:S2}
\begin{cases}
 \alpha \le \beta +(\beta- (n-3)/2), \quad if \quad (n-3)/2 <\beta < (n-1)/2,\\
 \alpha<\beta +1, \hspace{26mm} if \quad (n-1)/2 \le \beta<\infty ,
         \end{cases} 
\end{equation}
for some real number $\gamma >0$ (possibly depending on $\beta$ and $\alpha$).
\end{lemma}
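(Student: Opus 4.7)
The plan is to parametrize the Ewald sphere explicitly and then reduce the estimate to a trace/restriction argument on $\mathbb{S}^{n-1}$, exploiting the equivalence (via Plancherel) that $q \in W^{\beta,2}_1(\mathbb{R}^n)$ iff $\langle \cdot \rangle^{\beta}\widehat{q} \in H^{1}(\mathbb{R}^n)$. Concretely, parametrize $\Gamma_r(\eta)$ by $\xi = (\eta + r|\eta|\omega)/2$ for $\omega \in \mathbb{S}^{n-1}$, so that $d\sigma_{r\eta}(\xi) = (r|\eta|/2)^{n-1}\,d\omega$. This rewrites
\begin{equation*}
\widetilde{S}_r(q)(\eta) = \frac{c_n\,r^{n-1}|\eta|^{n-2}\chi(\eta)}{1+r}\int_{\mathbb{S}^{n-1}} \widehat{q}\!\left(\tfrac{\eta + r|\eta|\omega}{2}\right) \widehat{q}\!\left(\tfrac{\eta - r|\eta|\omega}{2}\right)d\omega,
\end{equation*}
which puts the two factors of $\widehat{q}$ symmetrically around $\eta/2$.

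The next step is to apply Cauchy--Schwarz in $\omega$ and pair $\widetilde{S}_r(q)$ against a test function $g\in L^2_{-\alpha}$. Swapping the order of integration via Fubini and, for each fixed $\omega$, changing variables to parametrize $\eta$ in terms of one of the two spherical points, the estimate reduces to controlling a weighted integral involving $|\langle \cdot \rangle^{\beta}\widehat{q}|^2$ restricted to the Ewald sphere. For this, I would use a scaled version of the trace theorem: for $g\in H^{1}(\mathbb{R}^n)$ and any sphere of radius $R = r|\eta|/2$,
\begin{equation*}
\int_{|x - c| = R} |g(x)|^2\,d\sigma(x) \lesssim \max(R,R^{-1})\,\|g\|_{H^1}^2.
\end{equation*}
Applied to $g = \langle \cdot \rangle^{\beta}\widehat{q}$ (noting $\|g\|_{H^1}\sim \|q\|_{W^{\beta,2}_1}$), this yields the desired control in the high-regularity regime $\beta \ge (n-1)/2$, giving any $\alpha < \beta + 1$. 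Tracking the scaling one sees the gain is exactly the one promised by the lemma, and the $R$ growth is absorbed by the prefactors $r^{n-1}|\eta|^{n-2}/(1+r)$ together with a decay of $\widehat{q}$ on high-frequency spheres.

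In the low-regularity regime $(n-3)/2 < \beta < (n-1)/2$, the direct $H^1$-trace is too demanding, because $\langle \cdot \rangle^{\beta}\widehat{q}$ only has effectively $\beta - (n-1)/2 + 1 < 1/2$ derivatives at the relevant scales. Here I would interpolate between the endpoint $\beta = (n-3)/2$, where a Stein--Tomas type bilinear restriction estimate for $\mathbb{S}^{n-1}$ provides the sharp bound, and the case $\beta = (n-1)/2$ handled above. This interpolation gives exactly $\alpha \le \beta + (\beta - (n-3)/2)$, matching the stated range. The factor $(1+r)^{-\gamma}$ arises by combining the explicit $1/(1+r)$ in $S_r$ with the decay of $\widehat{q}$ at large frequencies (reached on spheres of radius $\sim r|\eta|$); for $\varepsilon$-room in the inequality $\alpha < \beta + 1$, one can afford a small power of $r$ from this source to produce $\gamma > 0$.

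The main obstacle will be the low-regularity range, which requires a genuinely bilinear restriction argument adapted to the symmetric product $\widehat{q}(\eta/2 + \cdot)\widehat{q}(\eta/2 - \cdot)$ on the sphere, rather than a straightforward trace embedding. A secondary technical point is making the interpolation argument compatible with the weighted norm $L^2_\alpha$ and, simultaneously, extracting the decay $(1+r)^{-\gamma}$ uniformly in the region $r > 0$; this requires handling separately the near-unity range (where the prefactor $1/(1+r)$ suffices) and the large-$r$ range (where one must exploit decay of $\widehat{q}$ on the far-away Ewald sphere).
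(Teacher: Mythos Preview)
Your high-regularity sketch is in the right spirit (trace theorem applied to $\langle\cdot\rangle^{\beta}\widehat{q}\in H^{1}$), but your low-regularity plan has a real gap. You defer the range $(n-3)/2<\beta<(n-1)/2$ to an unspecified ``Stein--Tomas type bilinear restriction estimate'' at the endpoint $\beta=(n-3)/2$, followed by interpolation. Neither step is established: the lemma does not even hold at $\beta=(n-3)/2$ (the range is strict), so there is no endpoint estimate to interpolate from; and bilinear interpolation in the weighted spaces $L^{2}_{\alpha}$ with the extra $(1+r)^{-\gamma}$ decay is not automatic. You also use a trace inequality with constant $\max(R,R^{-1})$, which introduces an $r|\eta|$ loss for large spheres that you then try to recover from ``decay of $\widehat{q}$''; this recovery is not carried out and is where the argument would actually have to happen.

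The paper avoids all of this with a single, unified argument that treats both regimes at once. One applies Cauchy--Schwarz on the sphere with a \emph{weight} $|\eta-\xi|^{(n-1-2\lambda)/2}$, for a free parameter $0<\lambda<\min(1,(n-1)/2)$. The resulting ``bad'' integral $\int_{\Gamma_r(\eta)}|\eta-\xi|^{-(n-1-2\lambda)}d\sigma$ is bounded by $C(r|\eta|)^{2\lambda}$ via an elementary lemma, and the ``good'' integral is controlled by a trace estimate whose constant is \emph{independent of the radius} (this sharper trace bound, $\int_{\mathbb{S}_\rho}|f|^{2}d\sigma\le \|f\|_{L^{2}}^{2}+\|\nabla f\|_{L^{2}}^{2}$, is what makes the argument clean). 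After splitting according to whether $|\xi|\gtrsim|\eta|$ or $|\eta-\xi|\gtrsim|\eta|$, this yields
\[
\|\widetilde{K}_r(\widehat{f_1},\widehat{f_2})\|_{L^2_\alpha}\le C r^{\lambda}\bigl(\|f_1\|_{W^{\alpha-1+\lambda,2}_1}\|f_2\|_{W^{(n-1)/2-\lambda,2}_1}+\text{sym.}\bigr).
\]
Setting $\beta=\alpha-1+\lambda$ gives the high-regularity range directly; in the low-regularity range one simply imposes the additional constraint $(n-1)/2-\lambda\le\beta$, which forces $\beta>(n-3)/2$ and yields exactly $\alpha\le 2\beta-(n-3)/2$. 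No restriction theory or interpolation is needed, and the factor $r^{\lambda}$ with $\lambda<1$ combines with the explicit $2/(1+r)$ in $S_r$ to give $(1+r)^{-\gamma}$ with $\gamma=1-\lambda>0$ uniformly in $r$.
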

To simplify later computations we define the operator  
$$ {\widetilde{K}_{r}(g_1,g_2)(\eta)} = \chi(\eta){K}_{r}(g_1,g_2)(\eta),$$
where
\begin{equation}  \label{eq:K2}
{{K}_{r}(g_1,g_2)}(\eta) := \frac{1}{|\eta|}\int_{\Gamma_{r}(\eta)}|g_1(\xi)| |g_2(\eta-\xi)| \, \ds{r}(\xi).
\end{equation}
Then we have that
$$
\left| \widetilde{S}_r(q)(\eta) \right| \le  \frac{2}{1+r}{\widetilde{K}_{r}(\widehat{q},\widehat{q})(\eta)},
$$
and therefore the proof of Lemma \ref{lemma:SrQ2} is an immediate consequence of the following lemma taking $\gamma =1- \lambda$.
\begin{lemma} \label{lemma.K.Q2}
Let $n\ge 2$ and $f_1,f_2 \in W_1^{\beta,2}(\RR^n)$ with $\beta\ge 0$. Then the estimate
\begin{equation} \label{eq.thm.sph.1}
\norm{\widetilde K_{r}(\widehat{f_1},\widehat{f_2})}_{ L^2_\alpha}\le C r^{\lambda} \norm{f_1}_{W_1^{\beta,2}} \norm{f_2}_{W_1^{\beta,2}},
\end{equation}
 holds when  condition $(\ref{eq:range:S2})$ is also satisfied, for some real number  $0<\lambda<1$ (possibly depending on $\beta$ and $\alpha$).
\end{lemma}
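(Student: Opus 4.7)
The plan is to prove Lemma \ref{lemma.K.Q2} by combining a trade of a priori regularity for Fourier decay with bilinear trace-type estimates on the Ewald spheres $\Gamma_r(\eta)$.

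First, I would exchange the $W^{\beta,2}_1$ regularity of the $f_i$ for decay of their Fourier transforms. Setting $g_i := \jp{D}^{\beta}f_i$ and using Remark \ref{remark:Sob}, one has $\norm{g_i}_{L^2_1} \sim \norm{f_i}_{W_1^{\beta,2}}$ and $\widehat{f_i}(\xi) = \jp{\xi}^{-\beta}\widehat{g_i}(\xi)$. Substituting into the definition of $K_r$, the two weights $\jp{\xi}^{-\beta}$ and $\jp{\eta-\xi}^{-\beta}$ appear inside the spherical integral. Since $\Gamma_r(\eta)$ is a sphere centered at $\eta/2$, the antipodal map $\xi \mapsto \eta - \xi$ is an isometry of it; combined with the pointwise inequality $|\xi| + |\eta - \xi| \ge |\eta|$, this lets me split $\Gamma_r(\eta)$ into two hemispheres on which one of $|\xi|$, $|\eta-\xi|$ is at least $|\eta|/2$. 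Because the cutoff $\chi$ forces $|\eta| \ge C_0$, on each hemisphere one of the two weights is dominated by $C\jp{\eta}^{-\beta}$, which can be pulled outside the integral. A Cauchy--Schwarz on the sphere then separates the two factors and reduces the task to bounding $\int_{\Gamma_r(\eta)} |\widehat{g_i}|^2\,d\sigma$ and its weighted counterpart in terms of norms of the $g_i$.

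For these spherical $L^2$ bounds I would invoke two different trace estimates depending on $\beta$. In the lower range $(n-3)/2 < \beta < (n-1)/2$, the Agmon--Kato--Kuroda estimate
\begin{equation*}
\int_{S_R(c)} |\widehat{g}|^{2}\, d\sigma \le C R^{n-2} \norm{g}_{L^{2}_{s}}^{2}, \qquad s > 1/2,
\end{equation*}
applied with $s = 1$ and $R = r|\eta|/2$ (the translation by the sphere center $c = \eta/2$ being absorbed by modulating $g$ by a unimodular phase, which preserves $L^2_s$). In the upper range $\beta \ge (n-1)/2$, the Sobolev trace theorem $H^{\beta}(\RR^n) \hookrightarrow L^{2}(S_R)$ gives the sharper bound $\le C R^{\beta - 1/2}\norm{u}_{H^\beta}$, and this better $R$-dependence is exactly what delivers the full derivative gain $\alpha < \beta + 1$ of (\ref{eq:range:S2}). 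The weighted spherical integral $\int_{\Gamma_r(\eta)} \jp{\xi}^{-2\beta}|\widehat{g}|^2\,d\sigma$ is handled analogously, either by writing $\jp{\xi}^{-\beta}\widehat g = \widehat{\jp{D}^{-\beta}g}$ and applying the unweighted trace, or by a direct weighted version.

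The main obstacle is then to integrate the resulting pointwise bound in $L^2_\alpha(d\eta)$ with the sharp $\jp{\eta}$-power. A naive pointwise Cauchy--Schwarz in the sphere loses too much $|\eta|$ to recover the full range of $\alpha$ in (\ref{eq:range:S2}); instead, the double integral over $\eta$ and over $\Gamma_r(\eta)$ should be rewritten, by the change of variables $(\xi, \eta-\xi) \mapsto (u,v)$, as an integral over the codimension-$1$ constraint $\{|u-v| = r|u+v|\} \subset \RR^{2n}$ and then sliced by Fubini. This coupling between $\eta$-integration and sphere-integration is what produces the sharp exponent at the transition point $\beta = (n-1)/2$, where the Agmon--Kato--Kuroda and Sobolev trace estimates coincide. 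Finally, the factor $r^\lambda$ with $\lambda < 1$ in (\ref{eq.thm.sph.1}) is obtained by using a slightly subcritical weight in the trace estimate; this trade-off is essential because (\ref{eq.thm.sph.1}) feeds into Proposition \ref{prop:genPV}, whose hypothesis (\ref{eq:pv:Fr2}) requires $\gamma > 0$.
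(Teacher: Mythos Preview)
Your outline shares the symmetry split (into the regions $|\xi|\gtrsim|\eta|$ and $|\eta-\xi|\gtrsim|\eta|$) with the paper, but the core mechanism you propose is different from the paper's and, as written, has a genuine gap at the crucial step. The paper does \emph{not} vary the trace exponent with $\beta$: it uses only the $H^{1}$ trace of Lemma~\ref{lemma.trazas2} (constant independent of the radius) in both ranges of $\beta$. All of the $\beta$-dependence, and the factor $r^{\lambda}$, come instead from a \emph{weighted} Cauchy--Schwarz on the sphere with weight $|\eta-\xi|^{(n-1)-2\lambda}$, followed by the explicit bound of Lemma~\ref{lemma.integrals},
\[
\int_{\Gamma_{r}(\eta)}|\eta-\xi|^{-(n-1)+2\lambda}\,d\sigma_{r\eta}(\xi)\le C(r|\eta|)^{2\lambda}.
\]
The resulting weight $\jp{\eta}^{2\alpha-2+2\lambda}$ is then transferred to $\jp{\xi}$ using $|\xi|\gtrsim|\eta|$, and only after that is the $H^{1}$ trace applied and the order of integration in $(\xi,\eta)$ exchanged. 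Setting $\beta=\alpha-1+\lambda$ together with $(n-1)/2-\lambda\le\beta$ recovers exactly the range (\ref{eq:range:S2}); the constraint $\lambda<1$ is what forces the strict inequality $\alpha<\beta+1$.

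Your proposal has two concrete problems. First, the ``Fubini slicing over the codimension-one set $\{|u-v|=r|u+v|\}$'' is not a substitute for the weighted Cauchy--Schwarz: the $L^{2}_{\alpha}$ norm involves the \emph{square} of the spherical integral, so after opening the square one is on a codimension-two set in $\RR^{3n}$, and you give no mechanism to bound it. You correctly diagnose that unweighted Cauchy--Schwarz loses too many powers of $|\eta|$, but the remedy the paper uses (choosing the weight $|\eta-\xi|^{(n-1)-2\lambda}$ so that Lemma~\ref{lemma.integrals} exactly balances the loss) is absent from your sketch. Second, invoking the $H^{\beta}$ trace on $\widehat{g_{i}}$ in the upper range would require $\widehat{g_{i}}\in H^{\beta}$, i.e.\ $g_{i}\in L^{2}_{\beta}$, i.e.\ $f_{i}\in W^{\beta,2}_{\beta}$; the hypothesis only gives $W^{\beta,2}_{1}$. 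The paper sidesteps this precisely by keeping the trace at the $H^{1}$ level and letting the Cauchy--Schwarz weight carry the $\beta$-dependence.
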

In the proof we are going to use the following result.
\begin{lemma}[\cite{fix}, Lemma 3.3]\label{lemma.integrals}
Let $\SP_\rho\subset\RR^n$ be any sphere of radius $\rho$ and let $d\sigma_\rho$ be its Lebesgue measure.
Then for any $0< \lambda\le (n-1)/2$, we have that
\begin{equation*} 
\int_{\SP_\rho} \frac{1}{|x-y|^{(n-1)-2\lambda}} \, d\sigma_\rho(y) \le C_\lambda \rho^{2\lambda},
\end{equation*}
 for any $x\in\RR^n$, and for a constant $C_\lambda$ that only depends on  $\lambda$. 
\end{lemma}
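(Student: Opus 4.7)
The plan is to rescale to the unit sphere, then use rotational symmetry and polar coordinates to reduce to a one-dimensional integral that can be estimated directly.

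First I would substitute $y = \rho z$, so that $d\sigma_\rho(y) = \rho^{n-1}\,d\sigma_1(z)$ and $|x-y|^{-(n-1)+2\lambda} = \rho^{-(n-1)+2\lambda}|w-z|^{-(n-1)+2\lambda}$, where $w := x/\rho$. Collecting powers yields a factor $\rho^{2\lambda}$ out front, reducing the lemma to the uniform bound
\[
I(w) := \int_{\SP_1}|w-z|^{-(n-1)+2\lambda}\,d\sigma_1(z) \le C_\lambda \qquad \text{for all } w \in \RR^n.
\]

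Second, by rotational invariance $I(w)$ depends only on $|w|$. Placing $w = |w|e_1$ and parametrizing $z = (\cos\theta,\sin\theta\,\omega)$ with $\theta \in [0,\pi]$, $\omega \in \SP^{n-2}$, one computes $|w-z|^2 = (1-|w|)^2 + 2|w|(1-\cos\theta)$ and obtains
\[
I(w) = |\SP^{n-2}|\int_0^\pi \bigl((1-|w|)^2 + 2|w|(1-\cos\theta)\bigr)^{-(n-1)/2+\lambda}\sin^{n-2}\theta\,d\theta.
\]
The hypothesis $\lambda \le (n-1)/2$ makes the bracketed exponent non-positive, so for $|w|$ either large or small (say $|w|\ge 2$ or $|w|\le 1/2$) the bracket is bounded below by a positive constant, rendering $I(w)$ trivially bounded. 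The contribution from $\theta \in [\pi/2,\pi]$ is likewise harmless for any $|w|$, since there $1-\cos\theta \ge 1$.

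The main obstacle is the regime $|w|$ close to $1$ with $\theta$ close to $0$, where the integrand genuinely blows up at $|w|=1$, $\theta=0$. Setting $a := |1-|w||$ and using $1-\cos\theta \asymp \theta^2$, $\sin\theta \asymp \theta$ on $[0,\pi/2]$, this portion of $I(w)$ is controlled by a constant times
\[
\int_0^{\pi/2}(a^2+\theta^2)^{-(n-1)/2+\lambda}\theta^{n-2}\,d\theta,
\]
which I would split at $\theta=a$. On $[0,a]$ the integrand is $\lesssim a^{-(n-1)+2\lambda}\theta^{n-2}$, contributing $\lesssim a^{2\lambda}$; on $[a,\pi/2]$ it simplifies to $\lesssim \theta^{-1+2\lambda}$, integrable exactly because $\lambda > 0$. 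Since $a$ stays bounded in the relevant range of $|w|$, both pieces are $O(1)$ with constants depending only on $\lambda$, yielding the required uniform bound.
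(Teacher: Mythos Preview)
Your proof is correct: the rescaling to the unit sphere, reduction to a one-dimensional integral via rotational symmetry, and the dyadic split at $\theta=a$ all work as stated (one should perhaps remark that the sphere can be translated to the origin first, since $|x-y|$ is translation-invariant, but this is implicit). The paper does not actually prove this lemma---it only says it ``can be proved by direct computation'' and refers to the appendix of \cite{fix}---and your argument is precisely such a direct computation.
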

This can be proved by direct computation (for a detailed proof see \cite[Appendix]{fix}).
\begin{proof}[Proof of Lemma $\ref{lemma.K.Q2}$]
Since by \eqref{eq:cutoff1}, $\chi(\eta) = 0$ for $|\eta|\le 1$, we have that $|\eta|^{-1} \le  2 \jp{\eta}^{-1}$ in the region where $\chi(\eta)$ does not vanish. Then
\begin{equation*}
\norm{\widetilde{K}_r({\widehat{f_1}}, \widehat{f_2})}_{L^2_\alpha}^2 \le C \int_{\RR^n} \jp{\eta}^{2\alpha-2} \left( \int_{\Gamma_{r}(\eta)} |\widehat{f_1}(\xi)| |\widehat{f_2}(\eta-\xi)| \,\ds{r}(\xi) \right)^2  d\eta.
\end{equation*}
Now, $\eta = (\eta-\xi) + \xi$, so if we choose any $0<c<1/2$ at least one of the conditions $|\xi|>c|\eta|$ and $|\eta-\xi|>c|\eta|$ must hold.
But observe now that the change of variables $\xi' = \eta -\xi$ leaves invariant $\Gamma_r(\eta)$ and $\widetilde{K}_r({\widehat{f_1}}, \widehat{f_2})$, except for the fact that interchanges the roles of $\widehat{f_1}$ and $\widehat{f_2}$. Therefore is enough to study only the case of $|\xi|>c|\eta|$ since then the other follows applying the change of variables. We want to estimate
\begin{equation*} 
I :=\int_{\RR^n} \jp{\eta}^{2\alpha-2}  \left( \int_{\Gamma^+_{r}(\eta)} |\widehat{f_1}(\xi)| |\widehat{f_2}(\eta-\xi)| \,\ds{r}(\xi) \right)^2 d\eta,
\end{equation*}
$$ \text{where} \hspace{3mm} \Gamma_r^+(\eta) := \{\xi\in \Gamma_r(\eta):|\xi|>c|\eta|\}.$$
 We introduce a real parameter $0<\lambda \le (n-1)/2$. Then by Cauchy-Schwarz's inequality we have
\begin{align}
\nonumber &I \le C  \int_{\RR^n} \jp{\eta}^{2\alpha-2} \int_{\Gamma_{r}^+(\eta)} |\widehat{f_1}(\xi)|^2|\widehat{f_2}(\eta-\xi)|^2|\eta-\xi|^{n-1-2\lambda}\,\ds{r}(\xi) \times \dots\\
\nonumber &\hspace{60 mm} \dots \times \int_{\Gamma_{r}^+(\eta)}\frac{1}{|\eta-\xi|^{n-1-2\lambda}}\,\ds{r}(\xi)\, d\eta .
\end{align} 
Since $\Gamma_r(\eta)$ has radius $r|\eta|/2$, using Lemma \ref{lemma.integrals} to bound the second integral we obtain
\begin{align}
\nonumber &I \le C r^{2\lambda}\int_{\RR^n} {\jp{\eta}^{2\alpha-2}}|\eta|^{2\lambda}\times \dots \\ 
& \nonumber \hspace{2cm}\dots \times  \int_{\Gamma_{r}^+(\eta)} |\widehat{f_1}(\xi)|^2|\widehat{f_2}(\eta-\xi)|^2 \jp{\eta-\xi}^{n-1-2\lambda}\,\ds{r}(\xi)\, d\eta \\
\label{eq.Q2sph.1} &\le C r^{2\lambda}\int_{\RR^n} \int_{\Gamma_{r}(\eta)} |\widehat{f_1}(\xi)|^2{\jp{\xi}^{2\alpha-2+2\lambda}}|\widehat{f_2}(\eta-\xi)|^2 \jp{\eta-\xi}^{n-1-2\lambda}\,\ds{r}(\xi)\, d\eta,
\end{align}
using also that $\jp{\eta}^{2\alpha-2+2\lambda} \le C{\jp{\xi}^{2\alpha-2+2\lambda}}$, which follows from the fact that $|\eta| \le c |\xi|$, if we impose the extra condition ${\alpha-1+\lambda}\ge 0$.

We are going to use the trace theorem to bound  second integral in {(\ref{eq.Q2sph.1})}. The fundamental point is that for spheres, the constant of the trace theorem can be taken to be $1$,  independently of the radius of the sphere. See \cite[Proposition A.1]{fix},  for an elementary proof of this fact. Then
\begin{align}
\nonumber &I \le   \, Cr^{2\lambda} \int_{\RR^n} \int_{\RR^n} | \widehat{f_1}(\xi)|^2\jp{\xi}^{2\alpha-2+2\lambda}|\widehat{f_2}(\eta-\xi)|^2 \jp{\eta-\xi}^{(n-1)-2\lambda}  d\xi\,d\eta \\
\nonumber & \, + Cr^{2\lambda}\int_{\RR^n} \int_{\RR^n} \left |\nabla \left( \widehat{f_1}(\xi)\jp{\xi}^{\alpha-1+\lambda}\right) \right |^2  |\widehat{f_2}(\eta-\xi)|^2 \jp{\eta-\xi}^{(n-1)-2\lambda} d\xi \,d\eta\\
\nonumber &\,  + \, Cr^{2\lambda} \int_{\RR^n} \int_{\RR^n} | \widehat{f_1}(\xi)|^2\jp{\xi}^{2\alpha-2+2\lambda}\left |\nabla \left(\widehat{f_2}(\eta-\xi) \jp{\eta-\xi}^{(n-1)/2-\lambda}\right)\right|^2 d\xi\,d\eta.
\end{align} 
Therefore changing the order of integration and using that by  Plancherel theorem we have
\begin{equation*}
\int_{\RR^n} \left|\nabla (\widehat{f}(\xi)\jp{\xi}^{t} )\right|^2   \, d\xi \le C \norm{f}_{W^{t,2}_1}^2,
\end{equation*}
we obtain
$$I \le Cr^{2\lambda} \norm{f_1}_{W^{\alpha -1+\lambda,2}_1}^2\norm{f_2}_{W^{(n-1)/2-\lambda,2}_1}^2.$$
 As we have explained before, in the case $|\eta-\xi|>c|\eta|$ we obtain the same estimate  but interchanging the roles of $f_1$ and $f_2$. Putting both estimates together we get
\begin{align*}
&\norm{\widetilde{K}_r({\widehat{f_1},\widehat{f_2}})}_{L^2_\alpha} \\
&\le C r^{\lambda} \left( \norm{f_1}_{W^{\alpha -1+\lambda,2}_1}\norm{f_2}_{W^{(n-1)/2-\lambda,2}_1} + \norm{f_2}_{W^{\alpha -1+\lambda,2}_1}\norm{f_1}_{W^{(n-1)/2-\lambda,2}_1} \right).
\end{align*}
We also add the extra restriction $\lambda<1$, this is necessary to have  a negative value for $\gamma$ in Lemma \ref{lemma:SrQ2}. Now, fix  $\lambda$ such that
\begin{equation} \label{eq.par.Q2}
\beta =  \alpha -1+\lambda,
\end{equation}
 hence, the condition $\alpha -1+\lambda \ge 0$ used in the proof implies we must have $\beta \ge 0$. As a consequence of (\ref{eq.par.Q2}), equation (\ref{eq.thm.sph.1}) follows directly in the range $\beta \ge (n-1)/2$ (we are using remark \ref{remark:Sob}). But, by the conditions imposed in the proof we have to take into account the restrictions
\begin{equation}\label{restriccion_1}
\begin{cases}
0 < \lambda < 1 \\ 0 < \lambda \le \frac{n-1}{2}
\end{cases}   \Longleftrightarrow \begin{cases}
\beta < \alpha < \beta +1  \\ \beta +1 - \frac{n-1}{2} \le \alpha < \beta +1.
\end{cases}                
\end{equation}
We can discard the lower bounds for  $\alpha$ using that $\norm{f}_{L^2_\alpha} \le \norm{f}_{L^2_{\alpha'}}$  always holds if $\alpha\le \alpha'$. Therefore   only the restriction $ \alpha<\beta +1$ remains.

Otherwise, if $\beta$ is in the range $0 \le \beta < (n-1)/2 $, estimate (\ref{eq.thm.sph.1}) will follow if we add the extra condition  
\begin{equation} \label{rest2}
(n-1)/2-\lambda \le \beta.
\end{equation}
Then, since $\lambda<1$, we must have $\beta>(n-3)/2 $ (the other conditions on $\lambda$ don't add new restrictions). Also (\ref{eq.par.Q2}) and (\ref{rest2}) imply together  that $\alpha \le 2\beta-(n-3)/2$, which is a stronger condition than $\alpha<\beta+1$ since we have $\beta<(n-1)/2$. Hence, we have obtained the ranges of the parameters given in the statement.
\end{proof}
\begin{lemma} \label{lemma:Kpoint}
Let $q \in \mathcal S'(\mathbb{R}^n)$ such that $\widehat{q}$ is smooth. Then, for every $\eta \neq 0$ fixed, $S_r(q)(\eta)$ is smooth in the $r$ variable. Moreover, we have the following pointwise inequality
\begin{equation} \label{eq:Kpoint}
  \left | \partial_r  S_{r}(q)(\eta) \right | \le C  K_{r}(\widehat{q},\widehat{q})(\eta) + C |\eta|\sum_{i=1}^n K_{r}(\widehat{x_iq},\widehat{q}).
 \end{equation}
\end{lemma}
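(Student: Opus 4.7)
The plan is to parametrize the modified Ewald sphere by the unit sphere $\SP^{n-1}$ and then differentiate under the integral sign. Setting $\rho = r|\eta|/2$ and $\xi = \eta/2 + \rho\omega$ with $\omega \in \SP^{n-1}$, we have $d\sigma_{r\eta}(\xi) = \rho^{n-1}\, d\omega$, so that $S_r(q)(\eta)$ factors as
\begin{equation*}
S_r(q)(\eta) = A(r)\,B(r), \qquad A(r) := \frac{2\rho^{n-1}}{|\eta|(1+r)}, \qquad B(r) := \int_{\SP^{n-1}} \widehat{q}(\eta/2 + \rho\omega)\,\widehat{q}(\eta/2 - \rho\omega)\,d\omega.
\end{equation*}
Since $\widehat{q}$ is smooth, the integrand depends smoothly on $r$ through $\rho$, and compactness of $\SP^{n-1}$ permits differentiation under the integral sign; this gives the smoothness claim immediately.

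For the pointwise bound I would apply Leibniz's rule $\partial_r S_r = A'(r) B(r) + A(r) B'(r)$ and estimate each summand. A direct computation gives $A'(r)/A(r) = (n-1)/r - 1/(1+r)$, which is bounded on any compact subinterval of $(0,\infty)$. Combining this with the trivial bound $|B(r)| \le \frac{|\eta|}{\rho^{n-1}}\, K_r(\widehat q,\widehat q)(\eta)$ (obtained by taking absolute values in $B$ and undoing the parametrization) yields $|A'(r) B(r)| \le C\, K_r(\widehat q,\widehat q)(\eta)$, which is the first term on the right of (\ref{eq:Kpoint}).

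For the second piece, the identity $\partial_{\xi_j}\widehat{q}(\xi) = -i\,\widehat{x_j q}(\xi)$ together with the chain rule gives
\begin{equation*}
\partial_r \bigl[\widehat{q}(\eta/2 + \rho\omega)\widehat{q}(\eta/2 - \rho\omega)\bigr] = -\frac{i|\eta|}{2}\sum_{j=1}^n \omega_j\bigl[\widehat{x_j q}(\eta/2 + \rho\omega)\widehat{q}(\eta/2 - \rho\omega) - \widehat{q}(\eta/2 + \rho\omega)\widehat{x_j q}(\eta/2 - \rho\omega)\bigr].
\end{equation*}
Taking absolute values, the change of variable $\omega \mapsto -\omega$ on $\SP^{n-1}$ shows that the two resulting sphere integrals are equal, so $|B'(r)| \le \frac{|\eta|^2}{\rho^{n-1}}\sum_{j=1}^n K_r(\widehat{x_j q},\widehat q)(\eta)$; multiplying by $A(r)$ collapses the $\rho^{n-1}$ and $|\eta|$ factors to produce $|A(r) B'(r)| \le C|\eta|\sum_{j=1}^n K_r(\widehat{x_j q},\widehat q)(\eta)$, which is the second term of (\ref{eq:Kpoint}).

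The argument is essentially a careful application of the chain rule on a compact integration domain, so no deep analytical obstacle arises; the only mild subtlety is keeping track of the $r$-dependent prefactor $A(r)$ carefully enough that its ratios with $|\eta|/\rho^{n-1}$ absorb cleanly into absolute constants on the range of $r$ relevant for the subsequent application of Proposition \ref{prop:genPV}.
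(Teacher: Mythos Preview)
Your proof is correct and follows essentially the same route as the paper: parametrize $\Gamma_r(\eta)$ over $\SP^{n-1}$ via $\xi=\eta/2+\rho\omega$, differentiate under the integral sign, and convert back to $K_r$ operators. The only cosmetic differences are that the paper writes the derivative out in one expression rather than factoring as $A(r)B(r)$, identifies the two gradient terms as literally equal (not just after absolute values) via $\omega\mapsto-\omega$, and passes through $K_r(|\nabla\widehat q|,\widehat q)$ before splitting into the sum over $i$; your version with $\partial_j\widehat q=-i\,\widehat{x_jq}$ is equivalent, and your remark that $A'/A$ need only be bounded on the interval $(1-\delta,1+\delta)$ matches exactly the restriction the paper imposes.
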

In general, the constant $C$ in the estimate might depend on $\delta$, but this is harmless.
\begin{proof}
 We centre the Ewald sphere in (\ref{eq:Sr}) at the origin with the change $\xi = \eta/2 + r|\eta/2|\theta$, where $\theta \in \SP^{n-1}$, to obtain
\begin{equation} \label{eq:derivative1}
 S_{r}(q)(\eta) =\frac{r^{n-1}|\eta|^{n-2}}{2^{n-2}(1+r)} \int_{\SP^{n-1}}\widehat{q}\left(r\frac{|\eta|}{2}\theta + \frac{\eta}{2} \right) \widehat{q}\left(-r\frac{|\eta|}{2}\theta + \frac{\eta}{2}\right) \, d\sigma(\theta).
\end{equation} 
Now we can compute derivatives in the $r$ variable. Consider $\eta$ fixed, then
\begin{align*}
\nonumber & \partial_r S_{r}(q)(\eta) = \\
 \nonumber &=  \frac{((n-1)r^{n-2}(1+r)-r^{n-1})|\eta|^{n-2}}{2^{n-2}(1+r)^2} \int_{\SP^{n-1}}\widehat{q}\left(r\frac{|\eta|}{2}\theta + \frac{\eta}{2} \right) \widehat{q}\left(-r\frac{|\eta|}{2}\theta + \frac{\eta}{2}\right) \, d\sigma(\theta) \\
\nonumber &+  \frac{r^{n-1}|\eta|^{n-1}}{2^{n-1}(1+r)} \int_{\SP^{n-1}}\theta \cdot\nabla \widehat{q}\left(r\frac{|\eta|}{2}\theta + \frac{\eta}{2} \right) \widehat{q}\left(-r\frac{|\eta|}{2}\theta + \frac{\eta}{2}\right) \, d\sigma(\theta) \\
  &-  \frac{r^{n-1}|\eta|^{n-1}}{2^{n-1}(1+r)} \int_{\SP^{n-1}} \widehat{q}\left(r\frac{|\eta|}{2}\theta + \frac{\eta}{2} \right)\theta \cdot \nabla \widehat{q}\left(-r\frac{|\eta|}{2}\theta + \frac{\eta}{2}\right) \, d\sigma(\theta). 
\end{align*}
We have passed the derivative inside the integral since we are
integrating in finite measure and $\widehat{q}$ is smooth. This  implies
that $ S_{r}(q)(\eta )$ is smooth in the $r$ variable 
for every $\eta \neq 0$.  Also, a change of variables $\omega = -\theta $ shows that the
last terms are identical. Hence, if we undo the change to spherical
coordinates we get
\begin{align} 
 \nonumber \partial_r S_{r}(q)(\eta)=&   \frac{(n-2)r+(n-1)}{r(1+r)^2}\frac{2}{|\eta|} \int_{\Gamma_r(\eta)}\widehat{q}(\xi) \widehat{q}(\eta-\xi) \, \ds{r}(\xi)  \\
 \label{eq:derivative2} +& \frac{2}{(1+r)} \int_{\Gamma_r(\eta)} \frac{(\xi-\eta/2)}{|\xi-\eta/2|} \cdot\nabla \widehat{q}(\xi) \widehat{q}(\eta-\xi) \, \ds{r}(\xi).
\end{align}
Therefore by (\ref{eq:K2}), if we fix some $0<\delta<1$,  for $r\in (1-\delta,1+\delta)$ we obtain
\begin{equation*} 
  \left | \partial_r  S_{r}(q)(\eta) \right | \le C  K_{r}(\widehat{q},\widehat{q})(\eta) + C |\eta|  K_{r}(|\nabla\widehat{q}|,\widehat{q})(\eta).
 \end{equation*}
 The estimate follows then using that
\begin{equation*} 
 K_{r}(|\nabla\widehat{q}|,\widehat{q})\le  \sum_{i=1}^n K_{r}(\partial_i \widehat{q},\widehat{q})= C \sum_{i=1}^n K_{r}(\widehat{x_iq},\widehat{q}).
 \end{equation*}
\end{proof}
From Lemmas \ref{lemma:SrQ2} and \ref{lemma:Kpoint}  we  get the following proposition.
\begin{proposition}
\label{prop:derSr}
Let $n\ge 2$ and fix some $0<\delta <1$. Then for every $r\in (1-
\delta ,1+\delta )$ and $q \in \mathcal{S}(\mathbb{R}^{n})$ we have
that
\begin{equation*}
\lVert \partial _{r} \widetilde{S}_{r}(q)\rVert _{ L^{2}_{\alpha -1}}
\le C \lVert q\rVert _{W^{\beta ,2}_{2}}^{2},
\end{equation*}
holds when $\alpha $ and $\beta \ge 0$ satisfy condition
${(\ref{eq:range:S2})}$. 
\end{proposition}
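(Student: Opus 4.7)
The plan is to combine the pointwise identity of Lemma \ref{lemma:Kpoint} with the weighted Sobolev estimate for the spherical operator $\widetilde{K}_r$ from Lemma \ref{lemma.K.Q2}. Since $\chi$ does not depend on $r$ and its support avoids the origin, Lemma \ref{lemma:Kpoint} ensures $\partial_r \widetilde{S}_r(q)(\eta) = \chi(\eta)\partial_r S_r(q)(\eta)$ is well-defined and gives the pointwise bound
\[
|\partial_r \widetilde{S}_r(q)(\eta)| \le C\,\widetilde{K}_r(\widehat{q},\widehat{q})(\eta) + C\,\chi(\eta)|\eta|\sum_{i=1}^n K_r(\widehat{x_iq},\widehat{q})(\eta).
\]

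I would then take $L^2_{\alpha-1}$ norms term by term. The first term is controlled by the stronger $L^2_\alpha$ norm, and Lemma \ref{lemma.K.Q2} (with $f_1=f_2=q$) bounds it by $Cr^\lambda\lVert q\rVert_{W^{\beta,2}_1}^2$, which is uniform in $r\in(1-\delta,1+\delta)$. For the second term, the key bookkeeping observation is that on the support of $\chi$ we have $|\eta|\sim\jp{\eta}$, so the factor $|\eta|$ exactly trades one unit of weight: it converts $\lVert \chi(\eta)|\eta| K_r(\widehat{x_iq},\widehat{q})\rVert_{L^2_{\alpha-1}}$ into $\lVert \widetilde{K}_r(\widehat{x_iq},\widehat{q})\rVert_{L^2_\alpha}$ (up to constants). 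A second application of Lemma \ref{lemma.K.Q2}, now with $f_1=x_iq$ and $f_2=q$ (both in the admissible range \eqref{eq:range:S2}), bounds this by $Cr^\lambda\lVert x_iq\rVert_{W^{\beta,2}_1}\lVert q\rVert_{W^{\beta,2}_1}$.

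The only remaining ingredient is the weight-trading inequality $\lVert x_iq\rVert_{W^{\beta,2}_1}\le C\lVert q\rVert_{W^{\beta,2}_2}$, which is precisely why the target norm carries the weight $\jp{\cdot}^2$. Using Remark \ref{remark:Sob} to commute $\jp{D}^\beta$ through $\jp{\cdot}$, this reduces to estimating $\lVert\jp{D}^\beta(x_i\jp{x}q)\rVert_{L^2}$, which I would split as $x_i\jp{D}^\beta\jp{x}q + [\jp{D}^\beta,x_i]\jp{x}q$. The first summand is dominated by $\lVert \jp{x}^2\jp{D}^\beta q\rVert_{L^2}\sim\lVert q\rVert_{W^{\beta,2}_2}$, while the commutator has Fourier symbol of order $\beta-1$ (explicitly $\beta\xi_i\jp{\xi}^{\beta-2}$), hence is bounded by $\lVert \jp{x}q\rVert_{H^\beta}\le \lVert q\rVert_{W^{\beta,2}_2}$.

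No serious obstacle is anticipated: the two input lemmas do the heavy lifting and the argument is essentially a clean two-term decomposition. The single delicate point is the double weight exchange noted above, namely that the factor $|\eta|$ in \eqref{eq:Kpoint} is paid for by the drop from $\alpha$ to $\alpha-1$ on the target side, while the differentiation $\partial_i\widehat{q}=\widehat{-ix_iq}$ coming from $\partial_rS_r$ is paid for by the upgrade from $W^{\beta,2}_1$ to $W^{\beta,2}_2$ on the input side.
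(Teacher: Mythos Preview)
Your proposal is correct and follows essentially the same route as the paper: multiply \eqref{eq:Kpoint} by $\chi$, absorb the factor $|\eta|$ into the weight to pass from $L^2_{\alpha-1}$ to $L^2_\alpha$, apply Lemma~\ref{lemma.K.Q2} to each term, and finish with the weight-trading inequality $\lVert x_iq\rVert_{W^{\beta,2}_1}\le C\lVert q\rVert_{W^{\beta,2}_2}$. The only cosmetic difference is that the paper justifies this last inequality by checking integer $\beta$ and interpolating, whereas you give a direct commutator argument; both work.
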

 Notice the appearance of the Sobolev space  $W^{\beta,2}_2$ instead of $W^{\beta,2}_1$.
 \begin{proof}
Multiplying (\ref{eq:Kpoint}) by $\chi(\eta)$ we get
\begin{equation*} 
  \norm{\partial_r \widetilde S_{r}(q)}_{L^2_{\alpha-1}} 
   \le C  \norm{ \widetilde{K}_{r}(\widehat{q},\widehat{q}) }_{L_{\alpha-1}^2}+  C \sum_{i=1}^{n} \norm{\widetilde K_{r}(\widehat{x_iq},\widehat q)}_{ L^2_\alpha}.
\end{equation*}
Notice that we get the $L^2_{\alpha}$ norm in the last term  due to the extra $|\eta|$ factor  appearing in (\ref{eq:Kpoint}). Then, by  Lemma \ref{lemma.K.Q2} we obtain the desired estimate using that
\begin{equation} \label{eq:pain}
 \norm{\widetilde K_{r}(\widehat{x_iq},\widehat q)}_{ L^2_\alpha}\le C  \norm{x_iq}_{W_1^{\beta,2}} \norm{q}_{W_1^{\beta,2}} \le C  \norm{q}_{W_2^{\beta,2}}^2.   
\end{equation} 
The estimate $\norm{x_iq}_{W_1^{\beta,2}} \le C  \norm{q}_{W_2^{\beta,2}}$ can be verified  for integer $\beta$ and extended by interpolation to the general case.

 \end{proof}

By Lemma \ref{lemma:SrQ2} and \ref{prop:derSr} we can apply Proposition \ref{prop:genPV} to estimate the $\widetilde{Q}_2$ operator, but we leave this for the next section.

%%%%%%%%%%%%%%%%%%%%%%%%%%%%%%%%%%%%%%%%%%%%%%%%%%%%%%%%%%%%%%%%%%%%
%%%%%%%%%%%%%%%%%%%%%%%%%%%%%%%%%%%%%%%%%%%%%%%%%%%%%%%%%%%%%%%%%%%%
%%%%%%%%%%%%%%%%%%%%%%%%%%%%%%%%%%%%%%%%%%%%%%%%%%%%%%%%%%%%%%%%%%%%

%%%%%%%%%%%%%%%%%%%%%%%%%%%%%%%%%%%%%%%%%%%%%%%%%%%%%%%%%%%%%%%%%%%%
%%%%%%%%%%%%%%%%%%%%%%%%%%%%%%%%%%%%%%%%%%%%%%%%%%%%%%%%%%%%%%%%%%%%
%%%%%%%%%%%%%%%%%%%%%%%%%%%%%%%%%%%%%%%%%%%%%%%%%%%%%%%%%%%%%%%%%%%%

\section{Sobolev estimates for the general \texorpdfstring{$\widetilde{Q}_j$}{Qj} operator } \label{sec:Qj}

In this section we prove Theorem \ref{teo.Qj}.

 Let $\ell \ge 1$, and assume we have $\mb r \in (0,\infty)^{\ell}$,  $\mb r  = (r_1, \dots,r_{\ell})$ and  $f\in C^{\infty}_c((0,\infty)^{\ell})$. We define the  operators, 
$$P_i,d_i : C^{\infty}_c((0,\infty)^{\ell}) \to C^{\infty}_c((0,\infty)^{\ell-1}),$$
 following the notation introduced in (\ref{eq:dandP}),
\begin{align*}
d_i(f)(r_1,\dots,\widehat{r_i},\dots,r_\ell) &:= \int_0^\infty \delta(r_i-1)f(\mb r) \, dr_i, \\
 P_i(f)(r_1,\dots,\widehat{r_i},\dots,r_\ell) &:= \pv \int_{0}^\infty \frac{1}{1-r_i} f(\mb r) \, dr_i,
\end{align*}
where  $\widehat{r_i}$ indicates that this coordinate is deleted in the list. Hence, if $\ell=1$, $d_i(f)$ and $P_i(f)$ are just scalars. Also, if  $\mb r \in (0,\infty)^\ell$ we define the manifold,
$$\Gamma_{\mb r}(\eta) =  \Gamma_{ r_1}(\eta) \times  \dots  \times  \Gamma_{ r_\ell}(\eta) ,$$
and we denote by $\sigma_{\mb r} $ its Lebesgue measure (product of the measures of the spheres $\Gamma_{r_i}(\eta)$),
$$d \sigma_{\mb r}(\xi_1,\dots,\xi_\ell) = d\sigma_{r_1\eta}(\xi_1) \times\dots\times \, d\sigma_{r_\ell\eta}(\xi_\ell).$$
\begin{proposition}[$Q_j(q)$ structure] \label{prop:Qjstruct}
 Let $n\ge 2$ and $j\ge 2$. Then we have that
 \begin{equation} \label{eq:Qj}
 \widehat{Q_j(q)}(\eta) =   \prod_{i=1}^{j-1} \left( i\pi d_i + P_i \right)  S_{j,\mb r} (q)(\eta),
 \end{equation}
 where $\mb r  = (r_1, \dots,r_{j-1})$, and
 \begin{align}
  \nonumber  S_{j,\mb r}(q)&(\eta) := \left(\prod_{i=1}^{j-1} \frac{2}{1+r_i}\right) \times \dots
  \\ \label{eq:Sj} &\frac{1}{|\eta|^{j-1}} \int_{\Gamma_{\mb r}(\eta)}   \widehat{q}(\eta -\xi_1) \left(\prod_{i=1}^{j-2} \widehat{q}(\xi_i - \xi_{i+1}) \right) \widehat{q}(\xi_{j-1}) \,\, d\sigma_{\mb r}(\xi_1,\dots,\xi_{j-1}).
 \end{align}
\end{proposition}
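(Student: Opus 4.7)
The plan is to expand (\ref{eq:Qjraw}) on the Fourier side via the Neumann iteration of $(qR_k)^{j-1}$, and then extract the Ewald-sphere structure by means of spherical coordinates centred at $\eta/2$ combined with Sokhotski-Plemelj. In Fourier variables, multiplication by $q$ is convolution with $\hat q$, the resolvent $R_k$ is multiplication by $(-|\cdot|^2+k^2+i0)^{-1}$, and $\widehat{qe^{ik\theta\cdot}}(\xi)=\hat q(\xi-k\theta)$; the outer integral $\int e^{ik\theta\cdot y}(\cdot)\,dy$ evaluates the full iterate at $-k\theta=\eta/2$, since $\eta=-2k\theta$ (so $k=|\eta|/2$). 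Iterating $j-1$ times, and then translating $\xi_i\mapsto \xi_i-\eta/2$ in each variable, I obtain (with the Fourier-convention constants absorbed)
\begin{equation*}
\widehat{Q_j(q)}(\eta)=\int \hat q(\eta-\xi_1)\prod_{i=1}^{j-2}\hat q(\xi_i-\xi_{i+1})\,\hat q(\xi_{j-1})\prod_{i=1}^{j-1}\Bigl((|\eta|/2)^2-|\xi_i-\eta/2|^2+i0\Bigr)^{-1}d\xi_1\cdots d\xi_{j-1}.
\end{equation*}

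Next, in each variable I parametrise by spherical coordinates centred at $\eta/2$: $\xi_i=\eta/2+r_i(|\eta|/2)\omega_i$, $r_i>0$, $\omega_i\in\SP^{n-1}$. A direct computation gives $d\xi_i=(|\eta|/2)\,dr_i\,d\sigma_{r_i\eta}(\xi_i)$, where $d\sigma_{r_i\eta}$ is Lebesgue measure on the Ewald sphere $\Gamma_{r_i}(\eta)$ from (\ref{eq:ewald}), and the denominator in the resolvent becomes $(|\eta|/2)^2(1-r_i^2)+i0$. Combining Sokhotski-Plemelj with the identity $\delta((1-r_i)(1+r_i))=\delta(r_i-1)/2$ on $(0,\infty)$ gives
\begin{equation*}
\bigl((1-r_i^2)+i0\bigr)^{-1}=\frac{1}{1+r_i}\left(\pv\frac{1}{1-r_i}-i\pi\,\delta(r_i-1)\right),
\end{equation*}
so each $d\xi_i$ together with its resolvent contributes the radial weight $\tfrac{2}{|\eta|(1+r_i)}$ and an action of $i\pi d_i+P_i$ on the $r_i$ variable (the signs in $\pm i0$ are compatible with the conventions for $d$ and $P$ in (\ref{eq:dandP})).

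Reassembling the spherical integrations into a single integral over $\Gamma_{\mathbf{r}}(\eta)$ with measure $d\sigma_{\mathbf{r}}$, and pulling the product $\prod_i\tfrac{2}{|\eta|(1+r_i)}=|\eta|^{-(j-1)}\prod_i\tfrac{2}{1+r_i}$ out front, one recognises the remaining integrand on $\Gamma_{\mathbf{r}}(\eta)$ as $S_{j,\mathbf{r}}(q)(\eta)$ in (\ref{eq:Sj}); the distributional factor that survives is exactly $\prod_{i=1}^{j-1}(i\pi d_i+P_i)$, which gives (\ref{eq:Qj}). The one delicate point is the use of Fubini to interchange the spherical integrations with the distributional action in each $r_i$: this requires smoothness of the spherical integrand in $\mathbf{r}$ for $\eta\neq 0$ fixed, which is clear for Schwartz $q$ (compare the $j=2$ computation in Lemma \ref{lemma:Kpoint}) and is then extended by density to the function spaces needed for the estimates of Theorem \ref{teo.Qj}.
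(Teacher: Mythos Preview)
Your argument is correct and reaches the same formula, but the route is genuinely different from the paper's. The paper works in physical space: it starts from an explicit decomposition of $R_k(f)(x)$ into a spherical piece and a principal value piece (formula (\ref{eq:resAlberto})), rewrites both as integrals over the Ewald spheres $\Gamma_r(\eta)$ via the change $rk\omega=-\xi-k\theta$ to obtain (\ref{eq:resfin}), and then proves by \emph{induction on $m$} that the iterates $f_m=R_k\bigl((qR_k)^{m-1}(qe^{ik\theta\cdot})\bigr)$ carry the product structure (\ref{eq:fmex}); the statement then follows from $\widehat{Q_j(q)}(\eta)=\widehat{qf_{j-1}}(-k\theta)$. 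You instead expand the full iterate at once on the Fourier side and apply Sokhotski--Plemelj simultaneously in all $r_i$. Your route is more economical; the paper's inductive scheme has the advantage that the ``Fubini'' issue you flag (commuting the distributional $r_i$-action with the spherical integrals) is absorbed one resolvent at a time, so no separate justification is needed.

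One internal inconsistency worth cleaning up: your Sokhotski step correctly gives
\[
\bigl((1-r_i^2)+i0\bigr)^{-1}=\frac{1}{1+r_i}\Bigl(\pv\tfrac{1}{1-r_i}-i\pi\,\delta(r_i-1)\Bigr),
\]
i.e.\ the action $P_i-i\pi d_i$, yet in the next sentence you write $i\pi d_i+P_i$; the parenthetical about ``the signs in $\pm i0$'' does not justify the flip. The sign is immaterial for all the estimates that follow (everything is bounded in absolute value), but the passage as written is not self-consistent.
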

Proposition \ref{prop:Qjstruct} implies that the higher order operators $Q_j$ have a similar structure to the $Q_2$ operator (we consider $\prod_k^m$ =1 if $k>m$, as it is usual). In fact, when $j=2$, (\ref{eq:Qj}) is  equivalent to equation (\ref{eq:Q2}) since with the new notation we have  $ S_r =S_{2,\mb r}$ (in this case we  have $\mb r = r$, since there is only one parameter).  
\begin{proof}
 Let $k\in(0,\infty)$, we are going need the identity
\begin{equation} \label{eq:resAlberto}
R_k(f)(x) = i \frac{\pi}{2}k^{n-2} \int_{\SP^{n-1}} \widehat{f}(k\omega) e^{ik x \cdot \omega} \, d\sigma(\omega) + \pv \int_{\RR^n} e^{i x \cdot \zeta} \frac{\widehat{f}(\zeta)}{-|\zeta|^2 + k^2} \, d\zeta ,
\end{equation}
 for the resolvent of the Laplacian. (It follows from computing explicitly the limit in (\ref{eq:resolvent1}) in the sense of distributions, see for example  \cite{notasR} and \cite[pp. 209-236]{GS} for more details).
 We take spherical coordinates in the principal value integral, denoting by  $t$ the radial variable and use the change of variables $t=rk$ in the radial integral, 
 \begin{align*} 
  \pv \int_{\RR^n} e^{i x \cdot \zeta} \frac{\widehat{f}(\zeta)}{-|\zeta|^2 + k^2} \, d\zeta &=    \pv\int_0^\infty \frac{1}{(k-t)(k+t)} \int_{\mathbb{S}^{n-1}}e^{ix \cdot t\omega}\widehat{f}(t\omega) \, t^{n-1}d \sigma(\omega)\, dt \\
 = \pv \, \frac{1}{k}\int_0^\infty &\frac{1}{(1-r)(1+r)}\int_{\mathbb{S}^{n-1}}e^{ix \cdot rk\omega} \widehat{f}(rk \omega) \, (rk)^{n-1} d \sigma(\omega)\, dr \\
 = \pv \frac{1}{k}\int_0^\infty &\frac{1}{(1-r)(1+r)}\int_{\Gamma_r(\eta)}e^{i(-\xi-k \theta) \cdot x}\widehat{f}(-\xi -k \theta) \,d \sigma_{r\eta}(\xi)\, dr,
 \end{align*} 
where to obtain the integral over the Ewald sphere in the last line we have used the change of variables   $rk\omega= -\xi-k\theta$ in the spherical integral, and that $\Gamma_r(\eta) = \left\{ \xi \in \mathbb{R}^n: \; |\xi+k\theta|=rk  \right\}$ if $\eta= -2k\theta$ (see (\ref{eq:ewald})).
Hence,  using the analogous change of variables $k \omega=-\xi-k \theta$ in the first integral in (\ref{eq:resAlberto}),  we finally obtain
\begin{align}
 \nonumber R_k(f)(x)&= i \pi \frac{1}{|\eta|}\int_{\Gamma_1(\eta)} e^{i  (-\xi-k\theta) \cdot x} \widehat{f}(-\xi -k\theta) \, d\sigma_\eta(\xi) \\
   \nonumber &\hspace{20mm} + \pv \int_0^\infty \frac{2}{|\eta|(1+r)} \int_{ \Gamma_{r}(\eta)} e^{i  (-\xi-k\theta) \cdot x}  \, \widehat{f}(-\xi -k\theta) \, d \sigma_{r\eta}(\xi) \,dr \\
  \label{eq:resfin} &= (i \pi d + P)\left(  \frac{2}{|\eta|(1+r)} \int_{\Gamma_r(\eta)} e^{i  (-\xi-k\theta) \cdot x}  \, \widehat{f}(-\xi -k\theta) \, d \sigma_{r\eta}(\xi)\right).
\end{align} 

We recall that by \eqref{eq:Qjraw}, we have
\begin{equation} \label{eq:Qjraw2} 
 \widehat{Q_{j}(q)}(-2k\theta) =\int_{\RR^n} e^{ik \theta\cdot y} (qR_k)^{j-1}(q(\cdot)e^{ik\theta \cdot (\cdot)} )(y) \,dy.
 \end{equation}
Let $m \in \NN$, we define
\begin{equation} \label{eq:fm}
f_m(x) := R_k((q R_k)^{m-1}(q(\cdot) e^{ik\theta \cdot (\cdot)})) (x).
\end{equation}
We claim that
\begin{align}
  \nonumber f_m(x) &=\left( \prod_{i=1}^{m} (i\pi d_i + P_i) \right)   \left( \prod_{i=1}^{m}  \frac{2}{(1+r_i)} \right)  \frac{1}{|\eta|^{m}}\times \dots  \\
 \label{eq:fmex}  &\int_{\Gamma_{r_m}(\eta)}  \dots \int_{\Gamma_{r_1}(\eta)}e^{i (-\xi_m -k\theta) \cdot x} \,\widehat{q}(\eta -\xi_1) \left(\prod_{i=1}^{m-1} \widehat{q}(\xi_i - \xi_{i+1}) \right)   d\sigma_{\mb r}(\xi_1,\dots,\xi_{m}). 
 \end{align}
We prove the claim by induction. The case $m=1$ follows directly from \eqref{eq:resfin} using  that $\widehat{qe^{ik \theta \cdot (\cdot)}}(\xi)=
 \widehat{q}(\xi-k \theta)$ and that $\eta=-2k\theta$.
 
We are going to prove (\ref{eq:fmex}) for $m+1$  assuming that it is true for $m$. On the one hand, by (\ref{eq:fm}) and (\ref{eq:resfin}) we have 
\begin{align}\label{eq:parte}
\nonumber f_{m+1}(x) &= R_k(q f_{m} ) (x)=\left(i \pi d_{m+1}+P_{m+1}    \right)\dots \\
\hspace{3mm} &\left( \frac{2}{(1+r_{m+1})|\eta|}\int_{\Gamma_{r_{m+1}}(\eta)}e^{i(-\xi_{m+1}-k \theta)\cdot x}(\widehat{qf_m})(-\xi_{m+1}-k\theta) \, d\sigma_{r_{m+1}\eta}(\xi_{m+1})      \right).
\end{align}
On the other hand, by (\ref{eq:fmex}), changing the order of integration we have 
\begin{align}
\nonumber (\widehat{q f_m})(\zeta) &= \int_{\mathbb{R}^n}q(y)f_m(y)e^{-i\zeta\cdot y} \,dy  \\
\nonumber &=\left(  \prod_{i=1}^m (i \pi d_i+P_i  )  \right)  \left(\prod_{i=1}^m \frac{2}{1+r_i}    \right)\frac{1}{|\eta|^m}  \int_{\Gamma_{r_m}(\eta)}  \dots \int_{\Gamma_{r_1}(\eta)} \,\widehat{q}(\eta -\xi_1) \times \dots \\ 
&\label{eq:alberto_2} \hspace{30mm}  \left(\prod_{i=1}^{m-1} \widehat{q}(\xi_i - \xi_{i+1}) \right)\widehat{q}(k \theta+\zeta +\xi_m)   \, d\sigma_{\mb r}(\xi_1,\dots,\xi_{m})   .
\end{align}
Thus, putting  $\zeta =-\xi_{m+1}-k\theta$  in the previous equality and using  (\ref{eq:parte}),  we get
\begin{align*}
   f_{m+1}(x) &=\left( \prod_{i=1}^{m+1} (i\pi d_i + P_i) \right)   \left( \prod_{i=1}^{m+1}  \frac{2}{(1+r_i)} \right)  \frac{1}{|\eta|^{m+1}}\times \dots \\
    &\int_{\Gamma_{r_{m+1}}(\eta)}  \dots \int_{\Gamma_{r_1}(\eta)}e^{i (-\xi_{m+1} -k\theta) \cdot x} \,\widehat{q}(\eta -\xi_1) \left(\prod_{i=1}^{m} \widehat{q}(\xi_i - \xi_{i+1}) \right)   d\sigma_{\mb r}(\xi_1,\dots,\xi_{m+1}), 
 \end{align*}
which proves the claim.
 
By \eqref{eq:Qjraw2}  we have  that $\widehat{Q_j(q)}(-2k \theta)=\widehat{qf_{j-1}}(-k\theta)$, and
hence, in order to obtain (\ref{eq:Qj}), is enough to put  $\zeta=- k \theta $ in \eqref{eq:alberto_2},
\begin{align*}
\widehat{Q_j(q)}(-2k \theta) &=\left( \prod_{i=1}^{j-1}(i\pi d_i + P_i) \right)   \left( \prod_{i=1}^{j-1}  \frac{2}{(1+r_i)} \right)  \frac{1}{|\eta|^{j-1}} \times \dots \\
 &\int_{\Gamma_{r_{j-1}}(\eta)}  \dots \int_{\Gamma_{r_1}(\eta)}\widehat{q}(\eta -\xi_1) \left(\prod_{i=1}^{j-2}\widehat{q}(\xi_i - \xi_{i+1})\right) \widehat{q}( \xi_{j-1})  \, d\sigma_{\mb r}(\xi_1,\dots,\xi_{j-1}).
 \end{align*}
\end{proof}
 We  now introduce now the $\widetilde S_{j, \mb r}$ spherical  operators,
$$\widetilde S_{j,\mb r}(q)(\eta) := \chi(\eta) S_{j,\mb r}(q)(\eta),$$
as we did for $j=2$. The following proposition generalizes the results  of Lemma \ref{lemma:SrQ2} and Proposition \ref{prop:derSr} for  $j\ge 2$. Its proof will be given later on.
\begin{proposition} \label{prop:Srmulti}
Let $q \in \mathcal S(\RR^n)$, $n\ge 2$, $j\ge 3$ and $0<\delta<1$. Consider  all the multi-indices $\mb a = (a_1,\dots,a_{j-1})$ with $a_i$, $1\le i \le j-1$, either $0$ or $1$. Then the estimate
 \begin{equation} \label{eq:Srmultiest}
 \norm{ \partial_{\mb r}^{\mb a} \widetilde{S}_{j,\mb r} (q)}_{L^2_{\alpha-|\mb a|}} \le C \left( \prod_{i=1}^{j-1} \frac{1}{(1+r_i)^{\gamma}} \right) \norm{q}_{W^{\beta,2}_4}^j,
 \end{equation}
holds for  $\beta\ge0$, a certain  $\gamma >0$ (possibly dependent on $\beta$), and some constant $C =C(n,j,\alpha,\beta)$, if the following conditions also hold
\begin{equation} \label{eq:forgotten}
 r_i \in (1-\delta,1+\delta) \text{ if } a_i = 1, \text{ and } r_i \in (0,\infty) \text{ if } a_i=0.
\end{equation}
\begin{equation} \label{eq:rangeSphej}
  \begin{cases}
 \alpha \le  \beta + (j-1)(\beta - (n-3)/2), \hspace{4 mm} if \hspace{4mm} (n-3)/2 <\beta < (n-1)/2,\\
 \alpha < \beta + (j-1), \hspace{32mm} if \hspace{4mm} (n-1)/2 \le \beta<\infty .
         \end{cases}
\end{equation} 
\end{proposition}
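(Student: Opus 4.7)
The plan is to prove Proposition~\ref{prop:Srmulti} by mirroring the two-step argument already carried out for $j=2$ in Lemmas~\ref{lemma:Kpoint} and~\ref{lemma.K.Q2}, iterated across the $j-1$ Ewald spheres. The structural object is the master operator $K_{j,\mb r}(g_1,\dots,g_j)(\eta) := |\eta|^{-(j-1)}\int_{\Gamma_{\mb r}(\eta)}|g_1(\eta-\xi_1)|\cdots|g_j(\xi_{j-1})|\,d\sigma_{\mb r}$, whose cut-off version $\widetilde K_{j,\mb r}$ will dominate pointwise every term produced by $\partial_{\mb r}^{\mb a}\widetilde S_{j,\mb r}(q)$.

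The first step is a pointwise inequality for $\partial_{\mb r}^{\mb a}\widetilde S_{j,\mb r}(q)$ generalising Lemma~\ref{lemma:Kpoint}. Parametrising each $\Gamma_{r_i}(\eta)$ as $\xi_i=\eta/2+(r_i|\eta|/2)\theta_i$ with $\theta_i\in\SP^{n-1}$ and differentiating under the integral sign, each $\partial_{r_i}$ produces (i) a surface-measure term with no extra gradient and (ii) a factor of $|\eta|$ multiplying a spherical derivative $\theta_i\cdot\nabla_{\xi_i}$ of the integrand. Since $\xi_i$ appears only in the two neighbouring factors $\widehat q(\xi_{i-1}-\xi_i)$ and $\widehat q(\xi_i-\xi_{i+1})$, this gradient lands on one of them. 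Iterating over the multi-index $\mb a$ with entries in $\{0,1\}$, one gets a finite sum of terms each bounded by $|\eta|^{|\mb a|}$ times an instance of $K_{j,\mb r}(\widehat{x^{\gamma_1}q},\dots,\widehat{x^{\gamma_j}q})$, with multi-indices $\gamma_k$ satisfying $\sum_k|\gamma_k|=|\mb a|$ and $|\gamma_k|\le 2$, because each $\widehat q$-factor depends on at most two of the variables $r_i$. The factor $|\eta|^{|\mb a|}$ is exactly what converts an $L^2_\alpha$ bound into the $L^2_{\alpha-|\mb a|}$ bound in the statement.

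The second step is the master estimate $\norm{\widetilde K_{j,\mb r}(\widehat f_1,\dots,\widehat f_j)}_{L^2_\alpha}\le C\bigl(\prod_i r_i^{\lambda_i}\bigr)\prod_k\norm{f_k}_{W^{\beta_k,2}_1}$ generalising Lemma~\ref{lemma.K.Q2}. I would use the telescoping $\eta=(\eta-\xi_1)+\sum_{i=1}^{j-2}(\xi_i-\xi_{i+1})+\xi_{j-1}$ to split the domain into $j$ subregions according to which of the $j$ arguments has magnitude $\gtrsim|\eta|$, then in each region pull the full weight $\jp{\eta}^{\alpha}$ onto that ``large'' factor. Afterwards, I would apply Cauchy--Schwarz on one sphere at a time with weight $|\cdot|^{n-1-2\lambda_i}$, use Lemma~\ref{lemma.integrals} to absorb the reciprocal weight into $r_i^{2\lambda_i}|\eta|^{2\lambda_i}$, and collapse the surviving sphere integral into an $\RR^n$ integral by the uniform-constant trace theorem on spheres (\cite[Proposition~A.1]{fix}), each use of which costs one unit of weight on the corresponding factor. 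Plugging $f_k=x^{\gamma_k}q$ back in and using $|\gamma_k|\le 2$ on top of the trace weight, every factor is dominated by $\norm{q}_{W^{\beta_k,2}_{1+|\gamma_k|}}\le\norm{q}_{W^{\beta,2}_4}$ via Remark~\ref{remark:Sob}, which is precisely the source of the weight-$4$ space in the proposition.

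The parameter constraints are $0<\lambda_i<1$ (so the growth $r_i^{\lambda_i}$ is offset by the prefactor $(1+r_i)^{-1}$ built into $S_{j,\mb r}$, yielding the decay $(1+r_i)^{-\gamma}$ with $\gamma>0$), $\lambda_i\le(n-1)/2$ (from Lemma~\ref{lemma.integrals}), and $(n-1)/2-\lambda_i\le\beta$ (the $j-1$ ``small'' factors must lie in $W^{\beta,2}$), while matching the ``large'' factor to $\beta$ forces $\alpha-(j-1)+\sum_i\lambda_i\le\beta$. When $\beta\ge(n-1)/2$ only $\lambda_i<1$ binds, giving $\alpha<\beta+(j-1)$; when $(n-3)/2<\beta<(n-1)/2$ the binding constraint is $\lambda_i\le\beta-(n-3)/2<1$, yielding $\alpha\le\beta+(j-1)(\beta-(n-3)/2)$, exactly the range \eqref{eq:rangeSphej}. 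The main obstacle I anticipate is the bookkeeping in Step~2: the $j$-way case split together with $j-1$ successive Cauchy--Schwarz/trace-theorem applications must be organised so that constants remain uniform in the radii and so that the decay in each individual $r_i$ is extracted uniformly in the other $r_{i'}$. This reduces to a careful induction on $j$ starting from the $j=2$ template of Section~\ref{sec.Q2L2}, but managing the combinatorial indexing is where the real work lies.
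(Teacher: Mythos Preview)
Your approach is essentially identical to the paper's: a pointwise reduction of $\partial_{\mb r}^{\mb a}\widetilde S_{j,\mb r}(q)$ to instances of $\widetilde K_{j,\mb r}(\widehat{x^{\gamma_1}q},\dots,\widehat{x^{\gamma_j}q})$ with $|\gamma_k|\le 2$, followed by the $L^2_\alpha$ estimate of $\widetilde K_{j,\mb r}$ via the $j$-way case split, Cauchy--Schwarz with the weights of Lemma~\ref{lemma.integrals}, and the trace theorem. Two small corrections to your bookkeeping: first, the trace step costs \emph{two} units of weight on each interior factor, not one, because $\widehat q(\xi_{i-1}-\xi_i)$ depends on both $\xi_{i-1}$ and $\xi_i$ and may be differentiated once in each---this is why the paper's Lemma~\ref{lemma:Kj} lands in $W^{\beta,2}_2$, and $2+|\gamma_k|\le 4$ (not $1+|\gamma_k|$) is the true source of $W^{\beta,2}_4$; second, no induction on $j$ is needed in Step~2, as the $(j-1)$-fold trace theorem (Lemma~\ref{lemma.trazas2}) and the iterated integral bound \eqref{eq:iterInteg} handle all spheres simultaneously with the same $\lambda$.
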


With this proposition we  can prove finally Theorem \ref{teo.Qj}, with the help of the following density argument.
\begin{lemma} \label{lemma:density}
 Assume that the operator $\widetilde{Q}_j$ satisfies an a priori estimate
\begin{equation} \label{eq:estref}
\norm{\widetilde{Q}_j(q)}_{W^{\alpha,2}} \le C \norm{q}_{W^{\beta,p}_\delta}^{j},
\end{equation}
 for every $q\in C^\infty_c(\RR^n)$. Then there is a unique continuous extension  $\widetilde{Q}_j : W^{\beta,p}_\delta(\RR^n) \longrightarrow W^{\alpha,2}(\RR^n)$ of the operator, and estimate $(\ref{eq:estref})$ holds also for $q\in  W^{\beta,p}_\delta(\RR^n)$.
\end{lemma}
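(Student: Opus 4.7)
The plan is to use a standard polarization-and-density argument. First, the explicit formula \eqref{eq:Qjraw} (equivalently, the spherical integral representation in Proposition \ref{prop:Qjstruct}) exhibits $\widetilde Q_j(q)$ as a $j$-homogeneous expression in $q$: it is obtained by substituting a single function $q$ into the $j$ slots of a $j$-linear integral. Consequently there is a unique symmetric $j$-linear operator $T_j$ on $C^\infty_c(\RR^n)^j$ with $T_j(q,\ldots,q)=\widetilde Q_j(q)$, given by the polarization identity
\[
T_j(q_1,\ldots,q_j)=\frac{1}{j!}\sum_{S\subseteq\{1,\ldots,j\}}(-1)^{j-|S|}\widetilde Q_j\left(\sum_{i\in S}q_i\right).
\]
Combining this identity with the hypothesis \eqref{eq:estref} and using multilinearity to rescale inputs to unit norm yields the multilinear estimate
\[
\norm{T_j(q_1,\ldots,q_j)}_{W^{\alpha,2}}\le C'\prod_{i=1}^{j}\norm{q_i}_{W^{\beta,p}_\delta},\qquad q_1,\ldots,q_j\in C^\infty_c(\RR^n).
\]

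Second, given $q\in W^{\beta,p}_\delta(\RR^n)$, choose $q_n\in C^\infty_c(\RR^n)$ with $q_n\to q$ in $W^{\beta,p}_\delta$ (possible since $C^\infty_c$ is dense in these weighted Sobolev spaces for $1\le p<\infty$). The usual telescoping based on multilinearity of $T_j$ gives
\[
\widetilde Q_j(q_n)-\widetilde Q_j(q_m)=\sum_{k=1}^{j}T_j\bigl(\underbrace{q_n,\ldots,q_n}_{k-1},\,q_n-q_m,\,\underbrace{q_m,\ldots,q_m}_{j-k}\bigr),
\]
and the multilinear bound above yields
\[
\norm{\widetilde Q_j(q_n)-\widetilde Q_j(q_m)}_{W^{\alpha,2}}\le jC'\bigl(\norm{q_n}_{W^{\beta,p}_\delta}+\norm{q_m}_{W^{\beta,p}_\delta}\bigr)^{j-1}\norm{q_n-q_m}_{W^{\beta,p}_\delta}.
\]
Since $\{q_n\}$ is bounded in $W^{\beta,p}_\delta$ and $\norm{q_n-q_m}_{W^{\beta,p}_\delta}\to 0$, the sequence $\{\widetilde Q_j(q_n)\}$ is Cauchy in $W^{\alpha,2}(\RR^n)$, and we define $\widetilde Q_j(q)$ as its limit.

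Finally, applying the same telescoping bound to two different approximating sequences shows that the limit does not depend on the choice of $\{q_n\}$, so the extension is well defined; it is continuous by construction, and unique since any two continuous extensions must agree on the dense subset $C^\infty_c(\RR^n)$. Passing to the limit $n\to\infty$ in the hypothesis \eqref{eq:estref} applied to $q_n$ recovers the same bound for $q\in W^{\beta,p}_\delta(\RR^n)$. No step is delicate: the conceptual heart of the argument is the polarization identity that converts the homogeneous (diagonal) estimate into a full multilinear one, and the only technical prerequisite is density of $C^\infty_c$ in $W^{\beta,p}_\delta$, which is standard in this range of $p$.
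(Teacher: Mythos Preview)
Your proof is correct and rests on the same two ingredients as the paper's argument (which appears as Lemma~\ref{lemma:densityMain}): the polarization identity for symmetric multilinear operators, and density of $C^\infty_c$ in $W^{\beta,p}_\delta$. The organization differs slightly, and yours is cleaner. You first convert the diagonal estimate into a genuine multilinear bound $\norm{T_j(q_1,\ldots,q_j)}\le C'\prod_i\norm{q_i}$ via polarization plus rescaling to unit norm, and then the telescoping Cauchy argument is immediate. The paper instead telescopes first and then, for each mixed term $Q(g_l,g_k-g_l,g_k,\ldots,g_k)$, applies the polarization identity with an auxiliary scaling parameter $h$ to express it as a sum of diagonal evaluations and bound it directly; this is effectively your multilinear bound done inline, but in a more ad~hoc form. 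Your route has the advantage of isolating the multilinear estimate as a reusable statement and making the constants transparent; the paper's route avoids stating that estimate explicitly but is otherwise equivalent in content.
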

This lemma is just a trick to extend estimates for $\widetilde{Q}_j(q)$ without having to give an estimate for the  multilinear operator $Q_j(f_1,\dots,f_j)$ (this operator is defined by putting $f_i$ instead of $q$ in (\ref{eq:Qjraw}) following the order of appearance of each $q$ in the formula). It is a direct consequence of the more general statement given in {Lemma \ref{lemma:densityMain}} below.  
The key idea in the proof is to symmetrize $Q_{j}(f_{1},f_{2},\dots ,f_{j})$ and use a polarization identity for multilinear operators. (The advantage of having $f_{i} =q$ in most of the
estimates in this work is a question of  notational simplicity,
but it is not an essential restriction in any of them.)

\begin{proof}[Proof of  {Theorem \ref{teo.Qj}}]
We begin with the case $j=2$. By  {Proposition \ref{prop:derSr}} and  {Lemma \ref{lemma:SrQ2}} for each $q\in \mathcal S(\mathbb{R}^n)$ we can apply  {Proposition \ref{prop:genPV}} with $F_r =\widetilde S_r(q)$, $p=2$, $\tau = \alpha-1$ and $M=C \lVert q\rVert_{W_2^{\beta,2}}^2$. Therefore by   {(\ref{eq:Q2tilde})} this yields the estimate
\[  \lVert \widehat{\widetilde Q_2(q)}\rVert_{L^2_{\alpha'}} \le C \lVert  q \rVert_{W_2^{\beta,2}}^2,\]
  for $\alpha'<\alpha$  and $\alpha$ in the range   {(\ref{eq:range:S2})}.   Then by Plancherel theorem we get the desired estimate for $\widetilde Q_2(q)$ in the Sobolev norm, and by   {Lemma \ref{lemma:density}} we can extend by
density these estimates for $q\in W_{2}^{\beta ,2}(\mathbb{R}^{n})$.
This is enough to prove estimate  {(\ref{eq:estimateQ2})}.

Now, let's study the case $j\ge 3$. Consider $f\in \mathcal{S}$. We
introduce the following operators,
\begin{align}
\label{eq:T1}
T_{j,1}(r_{1},\dots ,r_{j-1})(f) :
&= \widetilde{S}_{j,\mathbf{r}}(f),
\\
\label{eq:Tkind}
T_{j,k}(r_{k}, \dots ,r_{j-1})(f) : &= (i\pi d_{k-1}
+P_{k-1})T_{j,k-1}(r_{k-1}, \dots ,r_{j-1})(f)
\\
\nonumber
&= \prod _{i=1}^{k-1} (i\pi d_{i} +P_{i})
\widetilde{S}_{j,\mathbf{r}}(f),
\end{align}
for $2 \le k \le j-1$,  and 
\begin{align} 
\nonumber T_{j,j}(f) := (i\pi d_{j-1} +P_{j-1})T_{j,j-1}(r_{j-1})(f)  &= \prod_{i=1}^{j-1} (i\pi d_i +P_i) \widetilde{S}_{j,\mathbf{r}}(f) \\
\label{eq:Tj} &= \widehat{\widetilde{Q}_j(f)}.
\end{align} 
$T_{j,k}(r_{k}, \dots ,r_{j-1})(f)(x)$ is a well
defined function, smooth in the variables $r_{k},\dots ,r_{j-1}$ and
$x$ (see  {Proposition \ref{prop:smooth}} in the appendix  for more
details). As we are going to see, the proof can be reduced to proving
the following claim.

\medskip
\textbf{Claim.} \emph{Let $1\le k\le j$, and let $\mathbf{a} = (a
_{1},\dots ,a_{j-1})$ with $a_{i} =0$ if $1\le i \le k-1$, and
$a_{i} =0,1$ if $k \le i \le j-1$. Then the estimate
\begin{equation}
\label{eq:Tkest}
\lVert \partial _{\mathbf{r}}^{\mathbf{a}} T_{j,k}(r_{k},\dots ,r_{j-1})(f)
\rVert _{L^{2}_{\alpha '}} \le c_{k} \lVert f\rVert ^{j}_{W^{\beta ,2}
_{4}},
\end{equation}
holds for $\alpha ' < (\alpha -|\mathbf{a}|)$ if conditions
\textup{ {(\ref{eq:forgotten})}} and \textup{ {(\ref{eq:rangeSphej})}} are
satisfied, with a constant $c_k$ given by
\begin{equation}
\label{eq:ck}
c_{k} = C C_{2}^{k-1}\prod _{i=k}^{j-1} \frac{1}{(1+r_{i})^{\gamma }},
\end{equation}
where $C_{2}$ is the constant introduced  in
 {Proposition \ref{prop:genPV}}.}
\medskip

By  {(\ref{eq:Tj})},  we have that for $q\in \mathcal S(\mathbb{R}^n)$, estimate  {(\ref{eq:Tkest})}  with $k=j$, $\mathbf{a}= 0$, and $f=q$ gives
\begin{equation*}
 \lVert \widehat{\widetilde Q_j(q)} \rVert_{L^2_{\alpha'}} \le C \lVert q \rVert_{W_2^{\beta,2}}^j,
\end{equation*} 
for every $\alpha'<\alpha$ and $\alpha$ in the range (\ref{eq:rangeSphej}).
Then, using Plancherel theorem and  {Lemma \ref{lemma:density}} to extend the resulting estimate for all $q \in W^{\beta,2}_4(\mathbb{R}^n)$, yields estimate (\ref{eq:estimateQj}). This is enough to conclude the proof of the theorem.

We now prove the claim by induction in $k$ (observe that $j$ is fixed in the claim). By  {(\ref{eq:T1})}, the case $k=1$
of estimate  {(\ref{eq:Tkest})} is equivalent to  {Proposition \ref{prop:Srmulti}}. To prove that  {(\ref{eq:Tkest})} holds true for
$2\le k \le j$, in each induction step we are going to use  {Proposition \ref{prop:genPV}} and  {(\ref{eq:Tkind})}. 

Let's assume that the claim holds for a certain $k$, $1\le k <j-1$, then
we are going to prove it for $k+1$. Let $\mathbf{a}' = (a'_{1},\dots
,a'_{j-1})$ with $a'_{i} =0$ if $1\le i \le k$, and $a'_{i} =0,1$ if
$k+1\le i \le j-1$. We are going to apply  {Proposition \ref{prop:genPV}} with
\begin{equation*}
F_{r_{k}}(x) := \partial _{\mathbf{r}}^{\mathbf{a}'} T_{j,k}(r_{k},
\dots ,r_{j-1})(f)(x).
\end{equation*}
By the induction hypothesis  {(\ref{eq:Tkest})} with $\mathbf{a} =
\mathbf{a}'$, and  {(\ref{eq:ck})} we have
\begin{equation}
\label{eq:Tk3ant}
\lVert F_{r_{k}}\rVert _{L^{2}_{\alpha '}} \le \frac{c_{k+1}}{(1+r
_{k})^{\gamma }} C_{2}^{-1} \lVert f\rVert ^{j}_{W^{\beta ,2}_{4}},
\end{equation}
for $\alpha ' < (\alpha -|\mathbf{a}'|)$ and $r_{k} \in (0,\infty )$.
Moreover, taking now $\mathbf{a}$ with $a_{i}=a'_{i}$ for $i \neq k$,
and $a_{k}=1$, we also get from  {(\ref{eq:Tkest})} the estimate
\begin{equation}
\label{eq:Tk3}
\lVert \partial _{r_{k}} F_{r_{k}} \rVert _{L^{2}_{\alpha '-1}}
\le \frac{c_{k+1}}{(1+r_{k})^{\gamma } } C_{2}^{-1} \lVert f\rVert
^{j}_{W^{\beta ,2}_{4}},
\end{equation}
with $\alpha ' < (\alpha -|\mathbf{a}'|-1)$ and $r_{k} \in (1-\delta
,1+\delta )$. Then, for each $f\in \mathcal S(\mathbb{R}^n)$ we can apply    {Proposition \ref{prop:genPV}}  since condition (\ref{eq:pv:Fr1}) is given by (\ref{eq:Tk3}) and   (\ref{eq:pv:Fr2}) by  (\ref{eq:Tk3ant}) with $M = c_{k+1}C_2^{-1} \lVert f \rVert^{j}_{W^{\beta,2}_4}$.
Therefore, for $\alpha ' < (\alpha -|\mathbf{a}'|)$, we obtain that
\begin{align*}
&\lVert \partial _{\mathbf{r}}^{\mathbf{a}'} T_{j,k +1}(r_{k},\dots ,r
_{j-1})(f)\rVert _{L^{2}_{\alpha '}} =
\\
&\lVert (i\pi d_{k} + P_{k})\partial _{\mathbf{r}}^{\mathbf{a}'} T
_{k}(r_{k},\dots ,r_{j-1})(f)\rVert _{L^{2}_{\alpha '}} = \lVert (i
\pi d_{k} + P_{k}) F_{r_{k}} \rVert _{L^{2}_{\alpha '}} \le c_{k}
\lVert f\rVert ^{j}_{W^{\beta ,2}_{4}},
\end{align*}
where the first equality is true by  {Proposition \ref{prop:smooth}} in the
appendix. This concludes the proof of the claim.
\end{proof}

We devote the remaining part of this section to prove Proposition  \ref{prop:Srmulti}. We define the operator
\begin{align*}
& K_{j,\mb r}(g_1,\dots, g_j)(\eta) =  K_{j,\mb r}(g_i)(\eta):=  \\
& \frac{1}{|\eta|^{j-1}} \int_{\Gamma_{\mb r}(\eta)}   |g_1(\eta -\xi_1)| \left(\prod_{i=1}^{j-2} |g_{i+1}(\xi_i - \xi_{i+1})| \right) |g(\xi_{j-1})| \,\, d\sigma_{\mb r}(\xi_1,\dots,\xi_{j-1}),
\end{align*} 
and $\widetilde K_{j,\mb r}(g_1,\dots, g_j)(\eta) := \chi(\eta)K_{j,\mb r}(g_1,\dots, g_j)(\eta)$ . Hence we have that
\begin{equation} \label{eq:KandS}
\left|\widetilde{S}_{j,\mb r}(q) (\eta)\right| \le  \left(\prod_{i=1}^{j-1} \frac{2}{1+r_i}\right) \widetilde K_{j,\mb r}(\widehat{q},\dots,\widehat{q})(\eta) .
\end{equation}
The main tool to prove Proposition \ref{prop:Srmulti} is the following Lemma.
\begin{lemma} \label{lemma:Kj}
Let $n\ge 2$ and $j\ge 3$, and consider  $f_l \in W_2^{\beta,2}(\RR^n)$, $1 \le l \le j $ with $\beta \ge 0$. Then the estimate
\begin{equation} \label{eq:Kjmain}
  \norm{\widetilde K_{j,\mb r}(\widehat{f_1},\dots,\widehat{f_j})}_{L^2_\alpha} \le C\left( \prod_{i=1}^{j-1} (1+r_i)^{\lambda} \right) \prod_{l=1}^{j} \norm{f_l}_{W_2^{\beta,2}},
\end{equation}
holds  when $\alpha$ is in the range given in $(\ref{eq:rangeSphej})$ for some real number $0<\lambda<1$. 
\end{lemma}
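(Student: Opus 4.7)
The proof plan is to extend the strategy of Lemma \ref{lemma.K.Q2} (the $j=2$ case) to the iterated setting, treating the $j-1$ nested spherical integrations via a single Cauchy-Schwarz on the product sphere $\Gamma_{\mathbf r}(\eta)$ followed by the uniform-constant trace inequality of \cite[Proposition A.1]{fix}. First, since $\chi(\eta)$ vanishes near the origin we have $|\eta|^{-(j-1)}\chi(\eta)\le C\jp{\eta}^{-(j-1)}$, as in the $j=2$ case. Second, I will exploit the chain decomposition
\[
\eta = (\eta-\xi_1) + \sum_{l=1}^{j-2}(\xi_l - \xi_{l+1}) + \xi_{j-1} \equiv x_1 + \dots + x_j,
\]
so that $|x_{l^*}|>|\eta|/j$ for at least one $l^*$. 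Since the target estimate \eqref{eq:Kjmain} is symmetric in $f_1,\dots,f_j$, it is enough to treat one representative region, say $|x_j|=|\xi_{j-1}|>c|\eta|$; the remaining $j-1$ regions follow by relabeling the variables on the product sphere and permuting the roles of the $f_l$.

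On this representative region, I will apply a single Cauchy-Schwarz on $\Gamma_{\mathbf r}(\eta)=\prod_{i=1}^{j-1}\Gamma_{r_i}(\eta)$ with the weight $w=\prod_{l=1}^{j-1}|x_l|^{n-1-2\lambda}$, leaving only the "large" variable $x_j$ unweighted. The pure-weight factor $\int_{\Gamma_{\mathbf r}} w^{-1}\,d\sigma_{\mathbf r}$ factorizes when integrated iteratively from $\xi_{j-1}$ outwards: at each step the remaining weight depends on only one $\xi_i$ (the later $\xi$'s being treated as fixed), and Lemma \ref{lemma.integrals} produces a factor of $C(r_i|\eta|)^{2\lambda}$, for a total contribution of $C\prod_i r_i^{2\lambda}\,|\eta|^{2(j-1)\lambda}$. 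On the other side the integrand becomes $\prod_l |\widehat{f_l}(x_l)|^2 \prod_{l=1}^{j-1}|x_l|^{n-1-2\lambda}$.

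Multiplying by $\jp{\eta}^{2\alpha}$ and using the large-factor inequality $\jp{\eta}\le C\jp{\xi_{j-1}}$ to transfer the resulting $\jp{\eta}^{2(\alpha-(j-1)+(j-1)\lambda)}$ weight onto $|\widehat{f_j}(\xi_{j-1})|^2$ (valid under $\alpha-(j-1)+(j-1)\lambda\ge 0$; the opposite regime is absorbed via Remark \ref{remark:Sob}), I will then apply the uniform-constant trace inequality of \cite[Proposition A.1]{fix} on each sphere factor, iteratively converting the spherical integrals into full-space ones. Integration in $\eta$, together with Fubini and Plancherel, then yields a bound of the form
\[
C\prod_i (1+r_i)^{2\lambda}\;\|f_j\|^2_{W^{\alpha-(j-1)+(j-1)\lambda,2}_2}\prod_{l=1}^{j-1}\|f_l\|^2_{W^{(n-1)/2-\lambda,2}_2},
\]
where the weight $\jp{x}^2$ arises from the gradient terms produced by the iterated trace, paralleling the $\jp{x}$ weight in the single-sphere case.

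Finally I will choose $\lambda\in(0,1)$ to obtain the two regimes in \eqref{eq:rangeSphej}. When $\beta\ge (n-1)/2$, any $\lambda$ sufficiently close to $1$ makes all the Sobolev exponents $\le\beta$, yielding $\alpha<\beta+(j-1)$. When $(n-3)/2<\beta<(n-1)/2$, the constraint $(n-1)/2-\lambda\le\beta$ forces $\lambda=(n-1)/2-\beta\in(0,1)$, and then the constraint on the first exponent gives $\alpha\le\beta+(j-1)(\beta-(n-3)/2)$. The principal technical obstacle is the clean organization of the single product-sphere Cauchy-Schwarz so that the pure-weight factor factorizes via iterated use of Lemma \ref{lemma.integrals}, together with the careful bookkeeping of weights and gradient terms produced by the iterated trace, so as to land in $W^{\beta,2}_2$ rather than a higher weighted Sobolev space.
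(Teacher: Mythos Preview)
Your proposal is correct and follows essentially the same approach as the paper: the same chain decomposition and case splitting, the same single Cauchy--Schwarz on the product sphere with weight $\prod_{l\neq l^*}|x_l|^{n-1-2\lambda}$, the same iterated use of Lemma~\ref{lemma.integrals} to evaluate the pure-weight factor as $C\prod_i r_i^{2\lambda}|\eta|^{2(j-1)\lambda}$, the same trace argument (stated in the paper as Lemma~\ref{lemma.trazas2}) producing the $W^{\beta,2}_2$ weight from the double gradient hits on the middle factors, and the same parameter discussion with $\beta=\alpha-(j-1)(1-\lambda)$. One small caveat: the other regions do not literally follow by ``relabeling'' since $K_{j,\mathbf r}$ is not symmetric in the $f_l$ (the endpoint factors $\widehat{f_1}(\eta-\xi_1)$ and $\widehat{f_j}(\xi_{j-1})$ sit differently from the middle ones); rather, as the paper puts it, ``the analysis is exactly the same'' in each case, with the order in which one applies Lemma~\ref{lemma.integrals} to the pure-weight integral adjusted so that at each step the remaining integrand depends on only one $\xi_i$.
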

\begin{proof} 
 Since 
 $$\eta = (\eta-\xi_1) + \sum_{i=1}^{j-2}(\xi_i-\xi_{i+1}) + \xi_{j-1},$$ 
 if we fix $c<1/j$, one of the  conditions $|\eta-\xi_1|>c|\eta|$, $|\xi_i-\xi_{i+1}|>c|\eta|$ for some $1\le i\le j-2$, or $|\xi_{j-1}|>c|\eta|$  must  hold.  Hence, the sets
\begin{align*}
A^1_\textbf{r}(\eta) &= \left\{      (\xi_1, \dots , \xi_{j-1}) \in \Gamma_\textbf{r}(\eta): \; |\eta -\xi_1 |>c |\eta| \right\}, \\
A^i_\textbf{r}(\eta) &= \left\{ (\xi_1,  \dots  , \xi_{j-1}) \in \Gamma_\textbf{r}(\eta): \; |\xi_{i-1} -\xi_i |>c |\eta| \right\}, \;\; i=2,   \dots , j-1, \\
A^j_\textbf{r}(\eta) &=\left\{      (\xi_1,  \dots  , \xi_{j-1}) \in \Gamma_\textbf{r}(\eta): \; |\xi_{j-1} |>c |\eta| \right\},
\end{align*}
satisfy $\Gamma_\textbf{r}(\eta)= \cup_{k=1}^jA^k_\textbf{r}(\eta)$. As a consequence 
\begin{equation} \label{eq:allcases}
\|\widetilde{K}_{j,\textbf{r}}(\widehat{f_1}, \dots , \widehat{f_j})\|_{L^2_\alpha} \leq \sum_{k=1}^j \|\widetilde{K}^k_{j,\textbf{r}}(\widehat{f_1}, \dots , \widehat{f_j})\|_{L^2_\alpha} ,
\end{equation}
 where $\widetilde{K}^k_{j,\textbf{r}}$ is defined as $\widetilde{K}_{j,\textbf{r}}$ but integrating  over $A^k_\textbf{r}(\eta)$ instead of $\Gamma_{\bf r}(\eta)$.

We now fix a parameter $0<\lambda<(n-1)/2$, such that
\begin{equation} \label{eq:Kjbeta}
\beta := \alpha -(j-1)(1-\lambda).
\end{equation}  
In the region where $\chi(\eta)$ does not vanish, $|\eta| \sim \jp{\eta}$, and hence
\begin{align} 
  \nonumber &\|\widetilde{K}^k_{j,\textbf{r}}(\widehat{f_1}, \dots , \widehat{f_j})\|_{L^2_\alpha}^2  \le \\
   \label{eq:KjB} &\int_{\RR^n} \jp{\eta}^{2\beta}|\eta|^{-2(j-1)\lambda}
 \left( \int_{A^k_\textbf{r}(\eta)}   |\widehat{f_1}(\eta -\xi_1)| \left(\prod_{i=1}^{j-2}| \widehat{f}_{i+1}(\xi_i - \xi_{i+1})|  \right) |\widehat{f_j}(\xi_{j-1})| \, d\sigma_{\mb r} \right)^2 d\eta,
\end{align}
where  $d\sigma_{\mb r} = d\sigma_{\mb r}(\xi_1,\dots,\xi_{j-1})$.
The analysis is exactly the same for each $\widetilde{K}^k_{j,\textbf{r}}$ $k=1,\cdot \cdot \cdot, j$, so we only show one explicitly, for example the case  $k=j$.

 If $\beta \ge 0$ we can use that  ${\jp{\eta}^\beta \le C \jp{\xi_{j-1}}^\beta}$  in  $A^j_{\mb r}(\eta)$. Hence multiplying and dividing by $|\eta-\xi_1|^{n-1-2\gamma}  \prod_{i=1}^{j-2} |\xi_{i}-\xi_{i+1}|^{n-1-2\gamma}$ and applying Cauchy-Schwarz inequality, we get the following point-wise estimate for the integrand of (\ref{eq:KjB}),
\begin{align}
\nonumber \jp{\eta}^{2\beta} &|\eta|^{-2(j-1)\lambda} \left( \int_{A^j_{\mb r}(\eta)}   |\widehat{f_1}(\eta -\xi_1)| \left(\prod_{i=1}^{j-2}| \widehat{f}_{i+1}(\xi_i - \xi_{i+1})|   \right) |\widehat{f_j}(\xi_{j-1})| \, d\sigma_{\mb r} \right)^2 \\
 \nonumber \le |\eta|^{-2(j-1)\lambda} &\int_{\Gamma_{\mb r}(\eta)} |\widehat{f_1}(\eta -\xi_1)|^2|\eta-\xi_1|^{n-1-2\lambda} \left(\prod_{i=1}^{j-2}| \widehat{f}_{i+1}(\xi_i - \xi_{i+1})|^2|\xi_i - \xi_{i+1}|^{n-1-2\lambda} \right) \\
     \label{eq:Kj1}  \dots &\times |\widehat{f_j}(\xi_{j-1})|^2 \jp{\xi_{j-1}}^{2\beta} \, d\sigma_{\mb r}  \int_{\Gamma_{\mb r}(\eta)} \frac{1 }{|\eta-\xi_1|^{n-1-2\gamma}}  \prod_{i=1}^{j-2} \frac{1}{|\xi_{i}-\xi_{i+1}|^{n-1-2\gamma}}   \, d\sigma_{\mb r} .
\end{align}  
 Now, by  Lemma \ref{lemma.integrals}  we have that
\begin{equation} \label{eq:iterInteg}
|\eta|^{-2(j-1)\lambda} \int_{\Gamma_{\mb r}(\eta)} \frac{ 1 }{|\eta-\xi_1|^{n-1-2\gamma}} \prod_{i=1}^{j-2} \frac{1}{|\xi_{i}-\xi_{i+1}|^{n-1-2\gamma}}  \, d\sigma_{\mb r} \le C \prod_{i=1}^{j-1} r_i^{2\lambda},
\end{equation}
 where $C$ is some constant independent of $\eta$ (to see this, always compute first the integral  in the variable $\xi_{i}$ that only appears in one factor, in this case $\xi_{j-1}$). Hence, using this in (\ref{eq:Kj1}) and integrating in the $\eta$ variable we get the estimate 
 \begin{equation*}
\|\widetilde{K}^j_{j,\textbf{r}}(\widehat{f_1}, \dots , \widehat{f_j})\|_{L^2_\alpha}^2 \le C\prod_{i=1}^{j-1} r_i^{2\lambda}  \int_{\RR^n} \int_{\Gamma_{\mb r}(\eta)}  |F(\xi_1,\dots,\xi_{j-1},\eta)|^2 \, d\sigma_{\mb r} \,d\eta,
\end{equation*} 
where
\begin{align*} 
  F(\xi_1,\dots,&\xi_{j-1},\eta) := \widehat{f_1}(\eta -\xi_1) \jp{\eta-\xi_1}^{(n-1)/2-\lambda} \times \dots \\
 &\nonumber  \left(\prod_{i=1}^{j-2} \widehat{f}_{i+1}(\xi_i - \xi_{i+1})\jp{\xi_i - \xi_{i+1}}^{(n-1)/2-\lambda} \right) \widehat{f_j}(\xi_{j-1}) \jp{\xi_{j-1}}^{\beta} .
 \end{align*}
Therefore,  as in Lemma \ref{lemma.K.Q2}, we apply  the trace theorem to each one of the integrals over he spheres $\Gamma_{r_i}(\eta)$, to obtain
\begin{align} 
\nonumber \|\widetilde{K}^j_{j,\textbf{r}}&(\widehat{f_1}, \dots , \widehat{f_j})\|_{L^2_\alpha}^2  \le C \left(\prod_{i=1}^{j-1} r_i^{2\lambda} \right) \times \dots \\
 \label{eq:Kj3}  &\hspace{-4mm}\sum_{0 \le |\alpha_1|,\dots, |\alpha_{j-1}| \le 1} \int_{\RR^n} \dots \int_{\RR^n} | \partial_{\xi_1}^{\alpha_1} \dots \partial_{\xi_{j-1}}^{\alpha_{j-1}} F(\xi_1,\dots,\xi_{j-1},\eta)|^2 \,d \xi_1 \dots \,d\xi_{j-1}  \, d\eta,
\end{align}
 where the $\alpha_i$ are multi-indices related to the corresponding $\RR^n$ variables $\xi_i$, see Lemma \ref{lemma.trazas2} in the appendix for a more detailed formulation.  Now, using the Leibniz rule in (\ref{eq:Kj3}) we can  put the derivative operators on the functions ${\widehat{f_i}\jp{\cdot}^{a}}$. In the worst case we are going to get  terms  of the kind
  $$\partial_{\xi_i}^{\alpha_i} \partial_{\xi_{i+1}}^{\alpha_{i+1}} \left( \widehat{f}_{i+1}(\xi_{i} -\xi_{i+1})\jp{\xi_{i} -\xi_{i+1}}^{a} \right),$$
  with at most two derivative operators having $|\alpha_i| = |\alpha_{i+1}|=1$. Therefore, if we  integrate each  summand first in $\eta$ and then in $\xi_1, \xi_{2}, \dots ,\xi_{j-1}$, we obtain
\begin{equation} \label{eq:Kj4}
\|\widetilde{K}^j_{j,\textbf{r}}(\widehat{f_1}, \dots , \widehat{f_j})\|_{L^2_\alpha}^2  \le C \left(\prod_{i=1}^{j-1} r_i^{2\lambda} \right) \norm{f_{j}}_{W_2^{\beta,2}}^2\prod_{l=1}^{j-1} \norm{f_l}_{W_2^{(n-1)/2 - \lambda ,2}}^2 .
\end{equation}
 Putting together (\ref{eq:Kj4}) and the analogous estimates coming from the analysis of the other cases in (\ref{eq:allcases}), we obtain
\begin{align*}
\norm{\widetilde{K}_{j,\mb r}(\widehat{f_i})}_{W_2^{\alpha,2}} \le C \left( \prod_{i=1}^{j-1} r_i^{2\lambda} \right) \sum_{i=1}^{j} \, \, \norm{f_i}_{W_2^{\beta,2}} \prod_{\substack{1\le l \le j \\ l \neq i}} \norm{f_l}_{W_2^{(n-1)/2-\gamma,2}} .
\end{align*}
We reason as in Lemma \ref{lemma.K.Q2}. First we impose the extra condition  $\lambda<1$.

As a consequence of Remark \ref{remark:Sob}, equation (\ref{eq:Kjmain}) follows directly in the range $\beta \ge (n-1)/2$. The restrictions on $\lambda$ together with (\ref{eq:Kjbeta}) give us the following restrictions on $\alpha$,
\begin{equation}\label{restriccion_2}
\begin{cases}
0 < \lambda < 1 \\ 0 < \lambda \le \frac{n-1}{2}
\end{cases}   \Longleftrightarrow \begin{cases}
\beta +(j-2)< \alpha < \beta +(j-1)  \\ \beta +(j-1) - \frac{n-1}{2} \le \alpha < \beta +(j-1).
\end{cases}                
\end{equation}
We discard the lower bounds for $\alpha$ as in Lemma \ref{lemma.K.Q2}.

Otherwise, if $\beta$ is in the range $\beta < (n-1)/2$, estimate (\ref{eq:Kjmain}) holds if we add the extra condition  $(n-1)/2-\lambda \le \beta$. Then, since $\lambda<1$, we must have $\beta>(n-3)/2$ as in Lemma \ref{lemma.K.Q2}. Also, (\ref{eq:Kjbeta}) together with $(n-1)/2-\lambda \le \beta$ imply that $\alpha \le \beta + (j-1)(\beta-(n-3)/2)$ which is a more restrictive condition than $\alpha<\beta+(j-1)$ since we have $\beta<(n-1)/2$. Hence, we have obtained the ranges of parameters given in the statement.
\end{proof}

\begin{proof}[Proof of Proposition $\ref{prop:Srmulti}$]
By (\ref{eq:KandS}), estimate (\ref{eq:Srmultiest}) follows directly if $\mb a = 0$. Therefore, we consider the case $\mb a \neq 0$.

Let $\mb r = (r_1,\dots,r_k)$, $1\le k < \infty$. We follow the same computations  to obtain  (\ref{eq:derivative2}) from   (\ref{eq:derivative1}). We have that for a general function $F(\xi_1,\dots,\xi_k,\eta)$, $C^1$ in the first $k$ variables, 
\begin{align} 
\nonumber \partial_{r_i} &\left( \frac{1}{1+r_i} \int_{\Gamma_{\mb r}(\eta)}  F(\xi_1,...,\xi_k,\eta) \,\, d\sigma_{\mb r}(\xi_1,\dots,\xi_k) \right)  \\
 \nonumber &  \hspace{5mm} =\frac{(n-2)r_{i}+(n-1)}{r_{i}(1+r_{i})^2} \int_{\Gamma_{\mb r}(\eta)}  F(\xi_1,...,\xi_k,\eta) \,\, d\sigma_{\mb r} \\
\label{eq:lastlemm} & \hspace{15mm} + |\eta|\frac{1}{1+r_{i}}\int_{\Gamma_{\mb r}(\eta)}  \theta_i \cdot \nabla_{\xi_i} F(\xi_1,...,\xi_k,\eta) \,\, d\sigma_{\mb r}, 
\end{align}
where $\theta_i = \frac{\xi_i -\eta/2}{|\xi_i -\eta/2|}$ is a unitary vector. Observe that the coefficients before the integrals are functions of $r_i$ which are bounded for $r_i \in (1-\delta,1+\delta)$ for any $0<\delta<1$ fixed. Hence if we take a derivative $\partial^{\mb a}_{\mb r}$ with $\mb a= (a_1,\dots,a_k)$ and  $a_i=0,1$,   we have
\begin{align}
 \nonumber &\left|\partial_{\mb r}^{\mb a} \left( \left( \prod_{i=1}^{j-1} \frac{1}{1+r_i} \right) \int_{\Gamma_{\mb r}(\eta)}  F(\xi_1,...,\xi_k,\eta) \, d\sigma_{\mb r} \right)\right|  \\
 \label{eq:d1} &\le C \left( \prod_{i=1}^{j-1} \frac{1}{1+r_i} \right)  \sum_{ \substack{0\le |\alpha_i| \le a_i, \\ 1 \le i \le k }} |\eta|^{|\alpha_1| + \dots + |\alpha_k| }\int_{\Gamma_{\mb r}(\eta)} \left| \partial_{\xi_1}^{\alpha_1} \dots \partial_{\xi_k}^{\alpha_k} F(\xi_1,...,\xi_k,\eta) \right| \, d\sigma_{\mb r},
 \end{align}
 where $\alpha_i$ are multi-indices associated to derivatives in $\RR^n$, and we have imposed $r_i \in (1-\delta , 1+\delta)$ if $a_i=1$ (to bound the coefficients dependent on $r_i$ as we did before). Notice that  $|\alpha_1| + \dots + |\alpha_k|$ can take all the integer values from $0$ to $|\mb a|$. We are interested in  computing $\partial_{\mb r}^{\mb a} S_{j,\mb r}$ so we put $k=j-1$ and
\begin{equation*}   
 F(\xi_1,...,\xi_{j-1},\eta) =\widehat{q}(\eta -\xi_1) \left(\prod_{i=1}^{j-2} \widehat{q}(\xi_i - \xi_{i+1}) \right) \widehat{q}(\xi_{j-1}). 
 \end{equation*}
In this case,  each potential is going to be derived at most twice, since in the worst case $\widehat{q}$ is valued on the difference of two variables, $\xi_i$ and $\xi_{i+1}$. Therefore  it suffices to give the following rough estimate,
\begin{equation*}
\int_{\Gamma_{\mb r}(\eta)} \left| \partial_{\xi_1}^{\alpha_1} \dots \partial_{\xi_{j-1}}^{\alpha_{j-1}} F(\xi_1,...,\xi_{j-1},\eta) \right| \, d\sigma_{\mb r} \le C \sum_{\substack{0 \le |\alpha_i'| \le 2 \\ 1\le i\le j-1}} K_{j,\mb r}(\partial^{\alpha_1'} \widehat{q},\partial^{\alpha'_2} \widehat{q}, \dots, \partial^{\alpha'_{j-1}} \widehat  q) (\eta),
\end{equation*}
 for some new multi-indices $\alpha_1',\dots ,\alpha_{j-1}'$. Hence, by (\ref{eq:Sj}), putting together  (\ref{eq:d1}) and the previous estimate, we   obtain
\begin{align*} 
  &\left| \partial_{\mb r}^{\mb a}{S}_{j,\mb r} (q)(\eta) \right| \\
   &\le C \left( \prod_{i=1}^{j-1} \frac{1}{1+r_i} \right) \frac{1}{|\eta|^{j-1}} \left( 1+|\eta|+\dots+ |\eta|^{|\mb a|} \right) \sum_{\substack{0 \le |\alpha_i| \le 2 \\ 1\le i\le j-1}} K_{j,\mb r}(\widehat{ qx^{\alpha_1}},\widehat{ qx^{\alpha_2}}, \dots, \widehat{ qx^{\alpha_{j-1}}})(\eta).
  \end{align*}
Then,  since $|\alpha_i|\le 2$, multiplying the previous inequality by $\chi(\eta)$, (\ref{eq:Srmultiest}) follows form Lemma \ref{lemma:Kj} using that $\norm{x^{\alpha_i} q}_{W_2^{\beta,2}} \le C  \norm{q}_{W_4^{\beta,2}}$, which can be obtained by the the same reasoning given after  (\ref{eq:pain}).
\end{proof}

%%%%%%%%%%%%%%%%%%%%%%%%%%%%%%%%%%%%%%%%%%%%%%%%%%%%%%%%%%%%%%%%%%%%
%%%%%%%%%%%%%%%%%%%%%%%%%%%%%%%%%%%%%%%%%%%%%%%%%%%%%%%%%%%%%%%%%%%%
%%%%%%%%%%%%%%%%%%%%%%%%%%%%%%%%%%%%%%%%%%%%%%%%%%%%%%%%%%%%%%%%%%%%

\section{Implicit estimates for the \texorpdfstring{$Q_j$}{Qj} operator} \label{sec:implicitQj}

In this section we prove Proposition \ref{prop:convergence}.
 We follow the method developed for fixed angle scattering in \cite{R},  and for backscattering in \cite{RV} (case $\beta \le n/2$) and \cite{RRe} (extension to general $\max(0,m)<\beta<\infty$). It has been also adapted to the elasticity setting in \cite{BFPRM1} and \cite{BFPRM2}.  As mentioned in the introduction, we have improved the regularity gain given in \cite{RV} and \cite{RRe} and we have obtained directly the general case $m \le  \beta<\infty$ by using the cancellation given by the fractional Laplacian $(-\Delta)^{s}$. As in the mentioned works, we begin by giving some estimates of the resolvent of the Laplacian (see  \cite{R,ReT} or \cite{notasR}). We define the conjugate resolvent operator
\begin{equation} \label{eq.Res.conj}
R_\theta(q)(x) := e^{-ik\theta \cdot x}R_k \left( e^{ik\theta\cdot(\cdot)} q(\cdot)\right )(x).
\end{equation}
\begin{lemma} \label{lemma.Res}
Let $s\ge 0$ and let $r$ and $t$ be such that $0\le 1/t-1/2 \le 1/(n+1)$ and $0\le 1/2 -1/r \le 1/(n+1)$. There exist $\delta$, $\delta'>0$ and $C$ (independent of k) such that
$$ \norm{R_{\theta}(q)}_{W^{s,r}_{-\delta}} \le C k^{-1 + (1/t-1/r)(n-1)/2  }\norm{q}_{W^{s,t}_{\delta'}} .$$
\end{lemma}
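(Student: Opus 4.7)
The plan is to reduce the general $s\ge 0$ case to $s=0$ via the Fourier multiplier structure of $R_\theta$, and then establish the $s=0$ estimate by interpolating between a weighted $L^2$ Agmon--Hörmander bound and a Stein--Tomas endpoint bound. A direct computation from \eqref{eq:resolvent1}, using that $R_\theta$ conjugates $R_k$ by the plane wave $e^{ik\theta\cdot x}$, shows that $R_\theta$ is the Fourier multiplier with symbol
\begin{equation*}
m_\theta(\xi) = \bigl(-|\xi|^2 - 2k\theta\cdot\xi + i0\bigr)^{-1}.
\end{equation*}
In particular $R_\theta$ commutes with the Bessel potential $\jp{D}^s$, and commuting $\jp{D}^s$ past the polynomial weights $\jp{\cdot}^{\pm\delta}$ costs only lower-order errors by a Mihlin multiplier argument (extending Remark \ref{remark:Sob} to $L^r$). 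This reduces the problem to the $s=0$ estimate
\begin{equation*}
\norm{R_\theta q}_{L^r_{-\delta}} \le C k^{-1 + (1/t - 1/r)(n-1)/2}\norm{q}_{L^t_{\delta'}}.
\end{equation*}

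Next, I would split $m_\theta$ by the Sokhotski--Plemelj formula into a principal-value part and an imaginary trace part supported on the Ewald sphere $S_{k,\theta} = \{\xi : |\xi+k\theta|=k\}$, which passes through the origin and has radius $k$. The trace term is controlled by a Stein--Tomas restriction/extension argument after rescaling $S_{k,\theta}$ to the unit sphere: for $\delta' > 1/2$ the trace theorem controls $\hat q|_{S_{k,\theta}}$ in $L^2(S_{k,\theta})$ by $\norm{q}_{L^t_{\delta'}}$ when $t$ lies in the Stein--Tomas range, while the dual extension bound controls the resulting oscillatory integral in $L^r_{-\delta}$ for $\delta > 1/2$. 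At the endpoint $1/t - 1/r = 2/(n+1)$ the scaling of $S_{k,\theta}$ by $k$ produces precisely the factor $k^{-2/(n+1)}$, which matches $-1 + (2/(n+1))(n-1)/2$. The principal-value piece satisfies the same bound by the weighted form of the Kenig--Ruiz--Sogge uniform Sobolev estimate, which is exactly the content of the references \cite{R,ReT,notasR} for this conjugated operator.

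Complex interpolation between this Stein--Tomas endpoint and the weighted $L^2$ Agmon--Hörmander bound
\begin{equation*}
\norm{R_\theta q}_{L^2_{-\delta}} \le C k^{-1}\norm{q}_{L^2_{\delta'}},
\end{equation*}
valid for any $\delta,\delta' > 1/2$, then yields the full range of exponents $0 \le 1/t - 1/2 \le 1/(n+1)$ and $0 \le 1/2 - 1/r \le 1/(n+1)$: the exponent $-1$ at $t=r=2$ and $-2/(n+1)$ at the Stein--Tomas endpoint interpolate linearly along the segment parameterized by $1/t - 1/r$, producing exactly $-1 + (1/t - 1/r)(n-1)/2$. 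The main technical obstacle lies in the principal-value term, whose sharp $k$-dependent $L^t_{\delta'} \to L^r_{-\delta}$ boundedness requires delicate oscillatory-integral analysis adapted to the curved sphere $S_{k,\theta}$; since this is already carried out in \cite{R,ReT,notasR}, I would invoke those results rather than reprove them, so that the remaining work here is essentially the multiplier reduction, the Sokhotski--Plemelj split, and the interpolation.
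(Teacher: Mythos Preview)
The paper does not prove this lemma: it is stated as a known resolvent estimate with only the references \cite{R,ReT,notasR} in lieu of proof. Your proposal therefore goes beyond what the paper does, since you sketch the standard argument (Fourier-multiplier reduction to $s=0$, Sokhotski--Plemelj splitting, Stein--Tomas restriction/extension combined with the Agmon--H\"ormander weighted $L^2$ bound, and complex interpolation) before ultimately deferring to the same references for the principal-value term. That outline is correct in spirit and is indeed the route taken in the cited works; the only point to flag is that the commutation of $\jp{D}^s$ with the weights $\jp{\cdot}^{\pm\delta}$ in $L^r$ for $r\neq 2$ is not quite covered by Remark~\ref{remark:Sob} and needs a separate (though standard) argument, and in fact the references handle the $s>0$ case slightly differently. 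In any case, since the paper simply quotes the result, your proposal is at least as complete as the paper's treatment.
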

We need also a theorem of Zolesio on the product of functions in the Sobolev spaces (a proof can be found in \cite{grin} and for the compactly supported case in \cite[pp. 182-183]{ReT})
\begin{lemma}[Zolesio] \label{lemma.zol}
Let $s_1,s_2,s \ge 0$, $s \le s_1$, $s \le s_2$, and let $r,t$ and $p$ be such that $t < \min(p,r)$ and 
$$s_1+s_2-s \ge n \left( \frac{1}{p} + \frac{1}{r} - \frac{1}{t} \right).$$
Then
$$ \norm{q f}_{W^{s,t}} \le C \norm{q}_{W^{s_1,p}} \norm{f}_{W^{s_2,r}} .$$
Moreover, if $q$ is compactly supported and $\delta,\delta'\in \RR$, then
\begin{equation} \label{eq:zol}
 \norm{q f}_{W^{s,t}_{-\delta}} \le C(\supp \, q,\delta,\delta') \norm{q}_{W^{s_1,p}} \norm{f}_{W^{s_2,r}_{\delta'}} .
 \end{equation}
\end{lemma}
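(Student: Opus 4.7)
The plan is to establish the unweighted product estimate via Bony's paraproduct decomposition, and then obtain the weighted/compactly supported refinement by a cutoff argument.

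For the unweighted inequality $\norm{qf}_{W^{s,t}} \le C \norm{q}_{W^{s_1,p}}\norm{f}_{W^{s_2,r}}$, I would decompose
\[
qf = T_q f + T_f q + R(q,f)
\]
into "low-high", "high-low", and "high-high" paraproducts via a Littlewood--Paley decomposition $q = \sum_j \Delta_j q$, $f = \sum_k \Delta_k f$. On each paraproduct I would apply $\jp{D}^s$, use Hölder's inequality on the dyadic blocks, and then Bernstein's inequality (to exchange between $L^a$ and $L^b$ on a dyadic block at cost $2^{jn(1/a-1/b)}$) to relocate integrability exponents. The three hypotheses $s \le s_1$, $s \le s_2$ and $s_1+s_2-s \ge n(1/p+1/r-1/t)$ are exactly what is needed to make the resulting dyadic series summable, and the strict inequality $t < \min(p,r)$ ensures that the intermediate Lebesgue indices called for by Bernstein are admissible. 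In the high-low piece the smoothness is absorbed essentially by $f$, using $s \le s_2$; in the low-high piece by $q$, using $s \le s_1$; and the remainder is where the full scaling condition is sharply used.

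For the weighted statement $(\ref{eq:zol})$, I would fix a cutoff $\phi \in C_c^\infty(\mathbb{R}^n)$ with $\phi \equiv 1$ on $\supp q$ and write $qf = q\cdot(\phi f)$. Since the product is supported in a fixed compact set, the weight $\jp{\cdot}^{-\delta}$ is uniformly bounded there, so
\[
\norm{qf}_{W^{s,t}_{-\delta}} \le C(\delta,\phi)\,\norm{qf}_{W^{s,t}}.
\]
Applying the unweighted product estimate and then writing $\phi f = \bigl(\phi\jp{\cdot}^{-\delta'}\bigr)\bigl(\jp{\cdot}^{\delta'} f\bigr)$, the factor $\phi\jp{\cdot}^{-\delta'}$ is smooth and compactly supported and multiplication by it is bounded on $W^{s_2,r}$; combined with the norm equivalence $\norm{\jp{\cdot}^{\delta'}\jp{D}^{s_2}g}_{L^r} \sim \norm{\jp{D}^{s_2}\jp{\cdot}^{\delta'}g}_{L^r}$ recorded in Remark \ref{remark:Sob}, this yields $\norm{\phi f}_{W^{s_2,r}} \le C(\phi,\delta')\,\norm{f}_{W^{s_2,r}_{\delta'}}$, which closes the proof.

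The main obstacle I expect is the high-high remainder $R(q,f)$, where derivatives must be distributed across both factors simultaneously. One cannot push all the derivatives onto a single factor; convergence of the double dyadic sum is obtained by Cauchy--Schwarz in the dyadic index together with the Littlewood--Paley characterization of Sobolev norms, and it is precisely here that the scaling hypothesis $s_1 + s_2 - s \ge n(1/p + 1/r - 1/t)$ is sharp. A secondary technicality is the verification of the endpoint cases of the Sobolev embeddings that appear implicitly in the Bernstein step, for which the strict inequality $t < \min(p,r)$ supplies just enough room.
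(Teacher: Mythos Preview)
The paper does not actually prove this lemma; it is stated as a known result with references (to \cite{grin} for the unweighted estimate and to \cite[pp.~182--183]{ReT} for the compactly supported case). Your paraproduct sketch is a standard and correct route to the unweighted inequality, and your cutoff argument for (\ref{eq:zol}) is the natural one, so there is nothing to compare against in the paper itself.

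Two small caveats on the weighted part. First, you invoke the equivalence $\norm{\jp{\cdot}^{\delta'}\jp{D}^{s_2}g}_{L^r}\sim\norm{\jp{D}^{s_2}\jp{\cdot}^{\delta'}g}_{L^r}$ from Remark~\ref{remark:Sob}, but that remark is stated in the paper only for $L^2$; the equivalence does hold for $1<r<\infty$ (via $L^r$-boundedness of order-zero pseudodifferential operators), but strictly speaking this needs a separate justification. Second, the step ``since $qf$ is compactly supported, the weight is bounded there, so $\norm{qf}_{W^{s,t}_{-\delta}}\le C\norm{qf}_{W^{s,t}}$'' is not literally correct as written: for non-integer $s$ the function $\jp{D}^s(qf)$ is not compactly supported, so boundedness of the weight on $\supp(qf)$ is not by itself the reason. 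The clean fix is the same commutation argument you use later: write $\jp{\cdot}^{-\delta}(qf)=(\jp{\cdot}^{-\delta}\psi)(qf)$ with $\psi\in C_c^\infty$ equal to $1$ on $\supp q$, use that multiplication by $\jp{\cdot}^{-\delta}\psi\in C_c^\infty$ is bounded on $W^{s,t}$, and then apply the weight/derivative norm equivalence.
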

\begin{proof}[Proof of Proposition $\ref{prop:convergence}$]
For brevity, we will omit the dependency of the constants on the dimension $n$. Without loss of generality assume $q \in C^\infty_c(B_\rho)$, where $B_\rho$ denotes the ball of radius $\rho$. In terms of $R_\theta$, defined in (\ref{eq.Res.conj}),  the expression of $Q_j$ given in (\ref{eq:Qjraw}) becomes 
$$\widehat{Q_j(q)}(\xi) = \int_{\RR^n} e^{i2k\theta \cdot y} (q R_{\theta})^{j-1}(q)(y) \,dy,$$
with $\xi=-2k\theta$. In spherical coordinates we can write 
\begin{equation*}
\norm{\widetilde Q_j(q)}^2_{W^{\alpha ,2}} \le  \int_{C_0}^{\infty}  k^{n-1+2\alpha}\int_{\SP^{n-1}}\left | \int_{\RR^n} e^{i2k\theta \cdot y} (q R_{\theta})^{j-1}(q)(y) \,dy \right|^2 \,d\sigma(\theta) \, dk.
\end{equation*}
Now,  if $f$ is a $C_c^\infty(\RR^n)$ function and $\beta \ge 0$,  the fractional Laplacian (see for example \cite[Section 3]{valdinoci}) can be defined by the identity
\begin{equation} \label{eq:fracfourier}
 \mathcal{F} \left((-\Delta)^{\beta/2} \,f\right) (\xi) := |\xi|^\beta \, \widehat{f}(\xi),
 \end{equation}
and we have that in the sense of distributions 
$$(-\Delta)^{\beta/2} e^{i2k\theta \cdot x} = (2k)^{\beta} e^{i2k\theta \cdot x},$$
see \cite[chapter 2]{silvestre} for a rigorous extension to distributions of the fractional Laplacian. Hence applying this to the previous inequality, since $(q R_{\theta})^{j-1}(q)\in C^{\infty}_c(\RR^n)$ we obtain 
\begin{align}
\nonumber \norm{\widetilde Q_j(q)}^2_{W^{\alpha,2}}& \ \\ 
\nonumber \le C(\beta) \int_{C_0}^{\infty}  &k^{n-1+2\alpha-2\beta} \int_{\SP^{n-1}} \left | \int_{\RR^n} (-\Delta)^{\beta/2} (e^{i2k\theta \cdot y} )(q R_{\theta})^{j-1}(q)(y) \,dy \right|^2 \,d\sigma(\theta) \, dk \\
\nonumber  = C(\beta) \int_{C_0}^{\infty}  &k^{n-1+2\alpha-2\beta} \int_{\SP^{n-1}} \left | \int_{\RR^n}  e^{i2k\theta \cdot y} (-\Delta)^{\beta/2} \left( (q R_{\theta})^{j-1}(q)\right) (y) \,dy \right|^2 \,d\sigma(\theta) \, dk \\
\label{eq.prpQj} &\le C(\beta) \int_{C_0}^{\infty}  k^{n-1+2\alpha-2\beta} \int_{\SP^{n-1}} \norm{(-\Delta)^{\beta/2} \left( (q R_{\theta})^{j-1}(q)\right) }_{L^1(\RR^n)}^2 \, d\sigma(\theta) \, dk.
\end{align}
Applying Lemma \ref{lemma.LapFracL1} in the Appendix we have
\begin{align*}
\norm{(-\Delta)^{\beta/2} \left( (q R_{\theta})^{j-1}(q)\right)  }_{L^1} &\le C(\beta)\norm{\jp{\cdot}^{\delta}q}_{W^{\beta,2}}\norm{\jp{\cdot}^{-\delta}  R_\theta ((q R_{\theta})^{j-2}(q))}_{W^{\beta,2}}\\
&\le C(\beta,\rho)\norm{q}_{W^{\beta,2}}\norm{  R_\theta ((q R_{\theta})^{j-2}(q))}_{W_{-\delta}^{\beta,2}}
\end{align*}
using that $q$ is compactly supported. Now, choose  $\delta$ in the previous equation as in Lemma \ref{lemma.Res}.
The idea to deal with the norm in the right hand side  is to iterate lemmas \ref{lemma.Res} and \ref{lemma.zol} following the diagram,

{\footnotesize{
\begin{equation*}
\minCDarrowwidth10mm
\begin{CD}
\mathrm{W}_{\delta'}^{\beta,t_{j-1}}
@>
\mathrm{R_\theta}
>>
\mathrm{W}_{-\delta}^{\beta,r_{j-1}}
@>
{\mathrm{\rm{q \cdot}}}
>>
\mathrm{W}_{\delta'}^{\beta,t_{j-2}}
\ldots 
@>
q\cdot
>>
W_{\delta'}^{\beta,t_{1}}  
@>
\mathrm{R_\theta}
>>
W_{-\delta}^{\beta,r_{1}}
\\
q
@>
{}
>>
R_\theta(q)
@>
{}
>>
qR_\theta(q)
\ldots
@>
{}
>>
(qR_\theta)^{j-2}(q)
@>
{}
>>
R_\theta(qR_\theta)^{j-2}(q)
\end{CD}
\end{equation*}
}}

where $r_1 =2$ and $t_{j-1} =2$ and $r_\ell$ and $t_\ell$, $\ell= 1, \dots, j-2$ have to satisfy  the conditions

\begin{align*}
&0\le \frac{1}{t_\ell} -\frac{1}{2} \le \frac{1}{n+1} \hspace{8mm}\text{and} \hspace{8mm} 0\le \frac{1}{2} -\frac{1}{r_{\ell+1}} \le \frac{1}{n+1},\\
&t_\ell < 2 \hspace{29.5mm} \text{and} \hspace{8mm} 0\le \frac{1}{2} +\frac{1}{r_{\ell+1}} - \frac{1}{t_\ell} \le  \frac{\beta}{n}.
\end{align*}
Hence we obtain
$$\norm{  R_\theta ((q R_{\theta})^{j-2}(q))}_{W_{-\delta}^{\beta,2}} \le C^j(\beta,\rho) k^{\gamma_j}\norm{q}^{j-1}_{W^{\beta,2}},$$
(in (\ref{eq:zol}) the constant depends on the support $B_\rho$ of $q$) where
\begin{align*}
\gamma_j &= -(j-1) + \frac{(n-1)}{2}  \sum_{\ell=1}^{j-1} \left(\frac{1}{t_\ell}- \frac{1}{r_\ell} \right) \\
&=  -(j-1) + \frac{(n-1)}{2} \sum_{\ell=1}^{j-2} \left(\frac{1}{t_\ell}- \frac{1}{r_{\ell+1}}\right).
\end{align*}
Now, for small $\varepsilon > 0$, when $\beta\ge m = (n-4)/2 + 2/(n+1)$ we can choose $r_\ell$ and $t_\ell$ satisfying all the previous conditions and 
$$1/t_\ell - 1/r_{\ell+1} = \max (1/2-\beta/n,\varepsilon) ,$$
for all $1\le \ell \le j-2$, and so we obtain
$$\gamma_j = -(j-1) + \frac{(n-1)}{2}(j-2) \max (1/2-\beta/n,\varepsilon). $$
 Putting all the previous estimates together in (\ref{eq.prpQj}) we obtain
\begin{multline} \label{eq:Qjimplicit}
 \norm{\widetilde Q_j(q)}^2_{W^{\alpha ,2}} \le C^{2j}(\beta,\rho)\, \norm{q}^{2j}_{W^{\beta ,2}}\int_{C_0}^\infty k^{n-1 + 2\alpha -2\beta + 2\gamma_j} \, dk \\
 = C^{2j}(\beta,\rho) \frac{C_0^{-2(\alpha_j-\alpha)}}{\alpha_j-\alpha} \norm{q}_{ W^{\beta,2}}^{2j},
 \end{multline}
with $\alpha<\alpha_j$ and $\alpha_j = \beta + (j-1) - \frac{n}{2} - \frac{(n-1)}{2} (j-2) \max{\left( 0,\frac{1}{2} - \frac{\beta}{n} \right )}$. By density, we can extend estimate  (\ref{eq:Qjimplicit})  for  $q\in W^{\beta,2}(\RR^n)$ compactly supported in $B_\rho$. This follows from Lemma \ref{lemma:density}, with minor changes to take into account the restriction in the support.

Hence we can consider now $q\in W^{\beta,2}(\RR^n)$ compactly supported in $B_\rho$. Choose some $\alpha>0$. Now, for  $\beta \ge m$, $\alpha_j$ grows linearly with $j$. Then, for any integer $l>0$ such that  $\alpha_l > \alpha$  we have by the previous estimates that
$$\left \| \sum_{j=l}^{\infty}{\widetilde Q_{j}(q)} \right\|_{W^{\alpha,2}} \le \sum_{j=l}^{\infty} \norm{\widetilde Q_j(q)}_{W^{\alpha,2}}\le  \sum_{j=l}^{\infty} C_0^{-(\alpha_j-\alpha)} C^j(\alpha,\beta,\rho) \norm{q}_{W^{\beta,2}}^{j}.$$
Using  the linear growth of $\alpha_j$ we can choose some $\varepsilon(\beta) =\varepsilon>0$ such that for every $j\ge l$, $(\alpha_j-\alpha) \ge j \varepsilon$. Therefore we obtain  that
$$ \left \| \sum_{j=l}^{\infty}{\widetilde Q_{j}(q)} \right\|_{W^{\alpha,2}} \le   \sum_{j=l}^{\infty} C_0^{-\varepsilon j } C^j(\alpha,\beta,\rho)\norm{q}_{W^{\beta,2}}^{j},$$
and the right hand side converges taking $C_0 > \left( C(\alpha,\beta,\rho)\norm{q}_{W^{\beta,2}} \right)^{1/\varepsilon}$. 
\end{proof}

%%%%%%%%%%%%%%%%%%%%%%%%%%%%%%%%%%%%%%%%%%%%%%%%%%%%%%%%%%%%%%%%%%%%%%%
%%%%%%%%%%%%%%%%%%%%%%%%%%%%%%%%%%%%%%%%%%%%%%%%%%%%%%%%%%%%%%%%%%%%%%%
%%%%%%%%%%%%%%%%%%%%%%%%%%%%%%%%%%%%%%%%%%%%%%%%%%%%%%%%%%%%%%%%%%%%%%%

\section{Some limitations on the regularity of the double dispersion operator} \label{sec:5}

In this section we use a certain family of compactly  supported radial and real  functions, to obtain the upper bounds to the maximum regularity of the $ Q_2$ operator given by Theorem \ref{teo:Q2count}. This family of functions was constructed  in \cite{fix} to illustrate an analogous  phenomenon in the fixed angle and full data scattering problems. 

See also \cite[pp. 20]{BM10} for an explicit radial counterexample for the the case $\beta =(1/2)^-$ and $n=3$.
\begin{lemma}[Proposition 5.3  of \cite{fix}] \label{lemma:gbeta}
For every $0<\beta<\infty$ there is a radial, real and compactly supported function $g_\beta$  such that $\widehat{g_\beta}$ is non negative, $\widehat{g_\beta}(0)>0$, and for some $c>0$ we have that
\begin{equation} \label{eq:gbetasymp}
\widehat{g_\beta}(\xi) \sim \, \jp{\xi}^{-n/2-\beta} \, \, \, \text{if} \, \, \, |\xi|>c.
\end{equation}
\end{lemma}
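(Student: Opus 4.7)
The plan is to construct $g_\beta$ by truncating the Bessel potential kernel of order $n/2+\beta$ against a smooth positive-definite compactly supported radial window. The positive-definite window preserves the non-negativity of the Fourier transform, while the Bessel kernel supplies the prescribed asymptotic decay profile on the Fourier side.

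Concretely, let $\phi_\beta(\xi):=\jp{\xi}^{-n/2-\beta}$, a smooth, strictly positive, radial, decreasing function, and denote by $G_{n/2+\beta}:=\mathcal{F}^{-1}(\phi_\beta)$ the classical Bessel potential kernel, a positive radial $L^1_{loc}$ function with at worst an integrable singularity at the origin and exponential decay at infinity. Choose any nonzero nonnegative radial $\eta\in C^\infty_c(B_{1/4})$ and put $\psi:=\eta*\eta$. Then $\psi\in C^\infty_c(B_{1/2})$ is nonnegative, real, and radial, while $\widehat{\psi}=(\widehat{\eta})^{2}$ is Schwartz, nonnegative, and strictly positive on some ball $B_\rho$ around the origin (since $\widehat{\eta}(0)=\int \eta>0$ and $\widehat{\eta}$ is continuous). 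Finally set
\begin{equation*}
g_\beta(x) := \psi(x)\, G_{n/2+\beta}(x).
\end{equation*}
By construction $g_\beta$ is real, radial, and compactly supported in $B_{1/2}$, and $\widehat{g_\beta} = c_{n}\, \widehat{\psi} * \phi_{\beta}$ is a continuous nonnegative function with $\widehat{g_\beta}(0)>0$ (both convolution factors being simultaneously positive on a set of positive measure).

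The only nontrivial task is the two-sided asymptotic $\widehat{g_\beta}(\xi)\sim\jp{\xi}^{-n/2-\beta}$ for $|\xi|>c$. For the upper bound, split the defining convolution into the regions $\{|z|\le|\xi|/2\}$ and $\{|z|>|\xi|/2\}$: on the first, $|\xi-z|\ge|\xi|/2$ yields $\phi_\beta(\xi-z)\lesssim\jp{\xi}^{-n/2-\beta}$ together with $\widehat\psi\in L^{1}$; on the second, the Schwartz decay of $\widehat\psi$ delivers $O(\jp{\xi}^{-N})$ for every $N$. The matching lower bound comes from restricting to $|z|\le\rho$: for $|\xi|$ large, $\phi_\beta(\xi-z)\gtrsim\jp{\xi}^{-n/2-\beta}$ uniformly in $z\in B_\rho$ by the slow variation of $\phi_\beta$ on balls of bounded radius, and $\int_{B_\rho}\widehat\psi>0$ by the choice of $\eta$. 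The main obstacle is precisely this lower bound, since a naive convolution argument only delivers the upper bound of the right order; the positivity of $\widehat\psi$ on a neighborhood of the origin, built into the construction $\psi=\eta*\eta$ with $\eta\ge 0$, is exactly what secures the matching lower bound.
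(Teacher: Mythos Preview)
Your construction is correct and is essentially the same as the paper's: the paper also sets $g_\beta(x)=(\phi*\phi)(x)\,G_\beta(x)$ with $\widehat{G_\beta}(\xi)=\jp{\xi}^{-n/2-\beta}$ and $\phi\in C^\infty_c(\RR^n)$ real and radial, invoking the same mechanism (positive-definiteness of the cutoff for nonnegativity of $\widehat{g_\beta}$, and the Bessel kernel for the asymptotics). Your convolution-splitting argument for the two-sided bound $\widehat{g_\beta}(\xi)\sim\jp{\xi}^{-n/2-\beta}$ is in fact more detailed than what the paper sketches, since it merely refers to \cite{fix} for the full proof.
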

Notice that $g_\beta \in W^{\gamma,2}$ if and only if $\gamma<\beta$.
The construction of these functions is not difficult, the idea is to define 
\begin{equation} \label{eq:lasthope}
g_\beta(x) :=  (\phi *\phi)(x) G_\beta(x) ,
\end{equation} 
where $\phi$ is any real and radial $C^\infty_c(\RR^n)$ function and  the  $G_\beta$ functions  are, up to normalizing factors,  kernels of Bessel potential operators. Indeed, if we have that
$$
\widehat{G_\beta}(\xi) := {\jp{\xi}^{-n/2-\beta}},
$$
then $G_\beta(x)$ is a smooth and exponentially decaying function outside the origin (see, for example \cite[Chapter V]{stein}). Hence,  multiplying $G_\beta$ as in (\ref{eq:lasthope}) by a $C^\infty_c(\RR^n)$ cut off function, non vanishing at the origin, we get the desired asymptotic behaviour  of $\widehat{g_\beta}(\xi)$. The choice of the cut off $\phi *\phi$ guarantees the positivity of $\widehat{g_\beta}(\xi)$ (see \cite{fix} for more details).

 The key idea behind the proof of Theorem \ref{teo:Q2count} is to  study the asymptotic behavior of $|\widehat{ {Q}_2(g_\beta)}(\eta)|$ when $|\eta| \to \infty $.  This is greatly simplified by the fact we have the explicit formula (\ref{eq:Q2}). Now,  $g_\beta$ has a real Fourier transform $\widehat{g_\beta}(\xi)$ by construction,  so $\widehat{Q_2(g_\beta)}$ has a real part given by the principal value term in (\ref{eq:Q2}) and an imaginary part given by $\pi S_{r=1}(g_\beta)$. As there is no possible cancellation between the real an imaginary parts, we are going to study only the asymptotic behavior of the spherical integral, which has the advantage of having a positive integrand.
To simplify notation we  put $S(q) := S_{r=1}(q)$ and $ \Gamma(\eta) := \Gamma_{r=1}(\eta)$. The main estimate is the following one.
\begin{lemma} \label{lemma.example.backs}
Let $\beta > -n/2$ and assume that $q_\beta \in \mathcal S'(\RR^n)$ satisfies the following conditions,
\begin{enumerate}[i)]
\item Its Fourier transform $\widehat{q_\beta}(\xi)$ is real and non negative function in all $ \RR^n$.

\item There is a constant $c>0$ such that if $|\xi|>c$, then $\widehat{q_\beta}(\xi)\ge C\jp{\xi}^{-n/2-\beta}$.

\item  $\widehat{q_\beta}(\xi)$ is  continuous and satisfies $\widehat{q_\beta}(0) >0$.
\end{enumerate}
Then we  have that if $|\eta|>4c$, there is a constant $C$ independent of $\eta$ such that
\begin{equation*}
S(q_\beta)(\eta) \ge C  \max\left( \jp{\eta}^{-\beta-n/2 -1},\jp{\eta}^{-2\beta-2}\right )  .
\end{equation*}
\end{lemma}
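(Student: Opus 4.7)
The plan is to use the explicit representation
\begin{equation*}
S(q_\beta)(\eta) = \frac{1}{|\eta|}\int_{\Gamma(\eta)} \widehat{q_\beta}(\xi)\, \widehat{q_\beta}(\eta-\xi)\, d\sigma(\xi),
\end{equation*}
(that is, \eqref{eq:Sr} specialized to $r=1$), together with the non-negativity of $\widehat{q_\beta}$ coming from hypothesis (i), to bound $S(q_\beta)(\eta)$ from below by integrating over two well-chosen subsets of the Ewald sphere $\Gamma(\eta)$. Each of the two subsets will yield one of the two lower bounds $\jp{\eta}^{-\beta-n/2-1}$ and $\jp{\eta}^{-2\beta-2}$, and the statement will follow at once on taking the maximum.

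For $\jp{\eta}^{-\beta-n/2-1}$ I would localize near $\xi=0$. By (iii) and continuity there is a fixed $\rho>0$ (which I can take smaller than $3c$) on which $\widehat{q_\beta}(\xi)\ge \widehat{q_\beta}(0)/2$. Parametrizing $\Gamma(\eta)$ via $\xi=(|\eta|/2)(e+\omega)$ with $e=\eta/|\eta|$ and $\omega\in\SP^{n-1}$ yields $|\xi|=|\eta|\sin(\theta/2)$, where $\theta$ is the angle between $\omega$ and $-e$; hence $|\xi|<\rho$ corresponds to a cap on $\SP^{n-1}$ of half-angle $\sim \rho/|\eta|$, whose image on $\Gamma(\eta)$ has surface measure $\sim (|\eta|/2)^{n-1}(\rho/|\eta|)^{n-1}=C\rho^{n-1}$ uniformly in $|\eta|$. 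The delicate point is precisely this cancellation of the Jacobian $(|\eta|/2)^{n-1}$ against the small angular measure $(\rho/|\eta|)^{n-1}$, which is what prevents the first bound from degenerating as $|\eta|\to\infty$. On the cap one also has $|\eta-\xi|\in[|\eta|-\rho,|\eta|+\rho]$, so $|\eta-\xi|>c$ once $|\eta|>4c$ and $\jp{\eta-\xi}\sim\jp{\eta}$, and hypothesis (ii) gives $\widehat{q_\beta}(\eta-\xi)\ge C\jp{\eta}^{-n/2-\beta}$. Multiplying the two pointwise lower bounds together, integrating over the cap, and dividing by $|\eta|$ produces the first estimate.

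For $\jp{\eta}^{-2\beta-2}$ I would use an equatorial region. Expanding $|\xi-\eta/2|^{2}=|\eta|^{2}/4$ on $\Gamma(\eta)$ yields the Pythagorean identity $|\xi|^{2}+|\eta-\xi|^{2}=|\eta|^{2}$ (an instance of Thales's inscribed-angle theorem), so on the subset $A:=\{\xi\in\Gamma(\eta):|\eta|/4\le|\xi|\le\sqrt{15}\,|\eta|/4\}$ both factors $|\xi|$ and $|\eta-\xi|$ lie in $[|\eta|/4,\sqrt{15}\,|\eta|/4]$ and are therefore comparable to $|\eta|$. In the latitude parametrization above this corresponds to a fixed range of $\theta$, so the surface measure of $A$ is a positive fraction of that of $\Gamma(\eta)$, hence $\sim|\eta|^{n-1}$. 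When $|\eta|>4c$, hypothesis (ii) applies to both arguments on $A$, yielding $\widehat{q_\beta}(\xi),\,\widehat{q_\beta}(\eta-\xi)\ge C|\eta|^{-n/2-\beta}$; the integral over $A$ is then at least $C|\eta|^{n-1}\cdot|\eta|^{-n-2\beta}=C|\eta|^{-1-2\beta}$, and dividing by $|\eta|$ gives the second estimate. Taking the maximum of the two bounds completes the plan.
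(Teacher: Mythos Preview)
Your argument is correct and follows essentially the same strategy as the paper's proof: both split the Ewald sphere into a small cap near the origin (yielding the $\jp{\eta}^{-\beta-n/2-1}$ bound via hypothesis~(iii)) and an equatorial band of measure $\sim|\eta|^{n-1}$ where both arguments are comparable to $|\eta|$ (yielding the $\jp{\eta}^{-2\beta-2}$ bound via hypothesis~(ii)). The only cosmetic differences are that the paper defines its band by $|(\xi-\eta/2)\cdot\theta|\le|\eta|/4$ rather than by $|\xi|\in[|\eta|/4,\sqrt{15}\,|\eta|/4]$, and argues the cap has measure bounded below by noting $B_\varepsilon\cap\Gamma(\eta)$ approaches a flat disc, whereas you compute the Jacobian explicitly; your use of the Pythagorean identity $|\xi|^2+|\eta-\xi|^2=|\eta|^2$ on $\Gamma(\eta)$ is a nice touch that makes the symmetry of the band transparent.
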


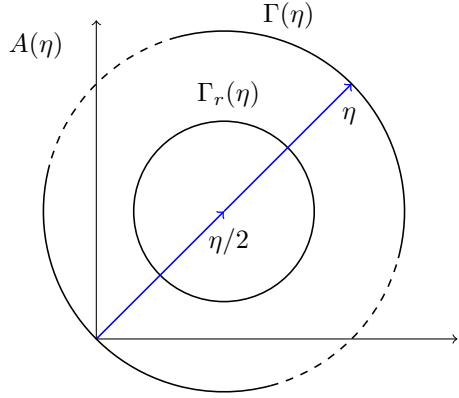
\begin{figure} 
\centering
\begin{tikzpicture}[scale=0.8]
 \draw[<->] (6,0) -- (0,0) --
(0,5.3);
\draw [  semithick] (2.12132,2.12132) circle [radius=1.5];
  \draw [->][semithick,blue] (0,0) -- (2.12132,2.12132);
    \draw [->][semithick,blue] (2.12132,2.12132) -- (4.24264,4.24264);
    \draw [dashed,semithick] ([shift=(105:3)]2.12132,2.12132) arc  (105:165:3);
        \draw [semithick] ([shift=(165:3)]2.12132,2.12132) arc  (165:285:3);   
         \draw [dashed,semithick] ([shift=(285:3)]2.12132,2.12132) arc  (285:345:3);
             \draw [semithick] ([shift=(-15:3)]2.12132,2.12132) arc  (-15:105:3);
 \node [above] at (2.2,3.7) {  $\Gamma_r(\eta)$ };
 \node [above] at (3.2,5) {  $\Gamma(\eta)$ };
 \node [below] at (2.2,2) {  $\eta/2$ };
 \node [below] at (4.2,4) {  $\eta$ };
  \node [above] at (-1,4.5) {  $A(\eta)$ };
\end{tikzpicture}
   \caption{The largest sphere is the Ewald sphere $\Gamma(\eta):= \Gamma_1(\eta)$, and the small one represents the Ewald sphere  $\Gamma_r(\eta)$ for some $r<1$.  The dashed region is the set $A(\eta)\subset \Gamma(\eta)$ .}
   
   \label{fig.ewald1}
\end{figure}

\begin{proof} 
Since $\widehat{q_\beta}$ is non negative, we have that
\begin{equation} \label{eq.example.3}
 S(q_\beta)(\eta) \ge   \frac{1}{|\eta|}\int_{A(\eta)}\widehat{q_\beta}(\xi)\widehat{q_\beta}(\eta-\xi) \,d\sigma_\eta(\xi),
\end{equation}
where, if we write $\eta = |\eta|\theta$ with $\theta$ a unitary vector, $A(\eta)\subset \Gamma(\eta)$ is defined  as follows
$$A(\eta) := \{ \xi \in \Gamma(\eta): |(\xi-\eta/2)\cdot \theta|\le |\eta|/4 \}.$$
That is, $A(\eta)$ is a band around the equator orthogonal to $\eta$ of width proportional  to $|\eta|$ (see figure \ref{fig.ewald1}). Observe that we have that $\xi\in A(\eta)$  if and only if $\eta-\xi\in A(\eta)$, and that in this region  $|\xi| \ge |\eta|/4$. Hence, if we consider $|\eta|>4c$ (where $c$ is given in the statement) and $\xi\in A(\eta)$, we have that $|\xi|>c$ and $|\eta-\xi|>c$, so from (\ref{eq.example.3}) we get
\begin{align}
\nonumber S(q_\beta)(\eta) &\ge C\frac{1}{|\eta|}\int_{A(\eta)} \jp{\eta-\xi}^{-\beta-n/2}\jp{\xi}^{-\beta-n/2} \,d\sigma_\eta(\xi)\\
  \label{anadida} &\ge  C \jp{\eta}^{-2\beta-n } |\eta|^{n-2} > C \jp{\eta}^{-2\beta -2},
\end{align}
where to get the last line we have used that the measure of $A(\eta)$ is proportional to  $|\eta|^{n-1}$, and that $|\xi| \le |\eta|$ and $|\eta-\xi| \le |\eta|$ always hold in $\Gamma(\eta)$.

 Now, if $\widehat{q_\beta}$ is continuous  and $\widehat{q_\beta}(0)>0$, we can take a ball $B_\varepsilon$ around the origin of radius $0 < \varepsilon< c$  such that $\widehat{q_\beta}{(\xi)}$ is positive in its closure. Then if $|\eta|> 2c$, $\xi \in B_\varepsilon\cap \Gamma(\eta)$ implies $|\eta-\xi| > c$, so
\begin{align}
\nonumber S(q_\beta)(\eta) &\ge  \frac{1}{|\eta|}\int_{B_\varepsilon\cap \Gamma(\eta)}\widehat{q_\beta}(\xi)\widehat{q_\beta}(\eta-\xi) \,d\sigma_\eta(\xi)\\
\label{eq.example.4} &\ge C \frac{1}{|\eta|}\int_{B_\varepsilon\cap\Gamma(\eta)} \jp{\eta-\xi}^{-\beta-n/2} \,d\sigma_\eta(\xi) \ge  C \jp{\eta}^{-\beta-n/2 -1} ,
\end{align}
using that  $|\eta-\xi| \le |\eta|$ always, and that the measure $|B_\varepsilon\cap\Gamma(\eta)|$ is bounded below by a positive constant independent of $\eta$ (this is because the region $B_\varepsilon\cap\Gamma(\eta)$ approaches a flat disc of radius $\varepsilon$ for $\eta$ large). To finish we have just to put together (\ref{anadida}) and (\ref{eq.example.4}).
\end{proof}
The proof of Theorem \ref{teo:Q2count} follows from the previous lemma and  the following simple result.
\begin{lemma} \label{lemma.Wloc}
Let $f \in \mathcal S'(\RR^n)$ be such that $\widehat f$ is a non negative measurable function. Assume also that for some $c>0$, $\gamma \in \RR $ and $|\eta|>c$ we have   ${\widehat{f}(\eta)\ge C \jp{\eta}^{-n/2-\gamma}}$. Then we have that   $f\notin W_{loc}^{\alpha,2}(\RR^n)$ if $\alpha \ge \gamma$.
\end{lemma}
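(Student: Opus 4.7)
The plan is to argue by contradiction. Suppose $f \in W^{\alpha,2}_{loc}(\mathbb{R}^n)$ for some $\alpha \geq \gamma$. Then for any $\phi \in C^\infty_c(\mathbb{R}^n)$ we would have $\phi f \in W^{\alpha,2}(\mathbb{R}^n)$, equivalently $\langle \cdot \rangle^{\alpha} \widehat{\phi f} \in L^2(\mathbb{R}^n)$. The strategy is to pick $\phi$ cleverly so that $\widehat{\phi}$ is non-negative; this way the convolution $\widehat{\phi f} = \widehat{\phi} \ast \widehat{f}$ inherits non-negativity from $\widehat{f}$ and can be bounded below pointwise using the given asymptotic lower bound, forcing a divergent $L^2$ integral against $\langle \xi \rangle^{2\alpha}$.

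The concrete choice is to fix a non-trivial $\psi \in C^\infty_c(\mathbb{R}^n)$ with $\int \psi \neq 0$, set $\tilde{\psi}(x) = \overline{\psi(-x)}$, and take $\phi := \psi \ast \tilde{\psi}$. Then $\phi \in C^\infty_c(\mathbb{R}^n)$ and $\widehat{\phi} = |\widehat{\psi}|^2 \geq 0$, with $\widehat{\phi}(0) = |\widehat{\psi}(0)|^2 > 0$. By continuity, there exist $\delta, c_0 > 0$ with $\widehat{\phi}(\zeta) \geq c_0$ for $|\zeta| \leq \delta$. Using that $\widehat{\phi f} = \widehat{\phi} \ast \widehat{f}$ (which is meaningful and pointwise non-negative since both factors are non-negative measurable functions), one obtains for every $\xi \in \mathbb{R}^n$
\begin{equation*}
\widehat{\phi f}(\xi) = \int_{\mathbb{R}^n} \widehat{\phi}(\xi - \eta) \widehat{f}(\eta)\, d\eta \geq c_0 \int_{B_\delta(\xi)} \widehat{f}(\eta)\, d\eta.
\end{equation*}

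For $|\xi|$ sufficiently large (say $|\xi| > c + \delta$) we have $B_\delta(\xi) \subset \{|\eta| > c\}$, so the pointwise hypothesis $\widehat{f}(\eta) \geq C \langle \eta \rangle^{-n/2 - \gamma}$ together with $\langle \eta \rangle \sim \langle \xi \rangle$ on $B_\delta(\xi)$ yields
\begin{equation*}
\widehat{\phi f}(\xi) \geq C' \langle \xi \rangle^{-n/2 - \gamma}.
\end{equation*}
Substituting this into the Sobolev norm gives
\begin{equation*}
\norm{\phi f}_{W^{\alpha,2}}^2 \geq C'' \int_{|\xi| > R} \langle \xi \rangle^{2\alpha - n - 2\gamma}\, d\xi,
\end{equation*}
and the integral on the right diverges whenever $2\alpha - 2\gamma \geq 0$, i.e., whenever $\alpha \geq \gamma$, contradicting $\phi f \in W^{\alpha,2}(\mathbb{R}^n)$.

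The only mildly delicate step is the justification of $\widehat{\phi f} = \widehat{\phi} \ast \widehat{f}$ at the level of tempered distributions; this is standard once one notes that $\widehat{\phi} \in \mathcal{S}(\mathbb{R}^n)$ and $\widehat{f}$ is a non-negative measurable function (hence defines a positive, therefore tempered, distribution), so the convolution is well defined pointwise a.e.\ and coincides with $\mathcal{F}(\phi f)$. Everything else is a direct computation.
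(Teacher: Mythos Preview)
Your proof is correct and follows essentially the same approach as the paper's: both multiply $f$ by a compactly supported test function whose Fourier transform is non-negative and strictly positive at the origin (you use $\phi=\psi\ast\tilde\psi$ so that $\widehat\phi=|\widehat\psi|^2$, the paper uses $\psi=\phi\ast\phi$ with $\phi$ real and radial), then bound the convolution $\widehat\phi\ast\widehat f$ from below on a small ball to obtain $\widehat{\phi f}(\xi)\gtrsim\langle\xi\rangle^{-n/2-\gamma}$ for large $|\xi|$, which forces the $W^{\alpha,2}$ norm to diverge when $\alpha\ge\gamma$. Your added remark on the distributional justification of $\widehat{\phi f}=\widehat\phi\ast\widehat f$ is a welcome detail.
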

\begin{proof}
We can always take a function $\psi \in C^\infty_c(\RR^n)$ such that $\widehat \psi(\xi) \ge 0$ in $\RR^n$ and $\widehat \psi(0)>0$ (for example, is enough to choose $\psi = \phi*\phi$ with $\phi \in C^\infty_c(\RR^n) $ radial and real, as in the definition of $g_\beta$). Then we can take an $0<\varepsilon<c/2$ small such that $\widehat{\psi}(\xi)$ is bounded below by a positive constant in $B_\varepsilon$. Hence if $ |\eta| \ge 2c$,
\begin{align*}
\widehat{\psi f}(\eta) &= \int_{\RR^n} \widehat \psi(\xi) \widehat{f}(\eta-\xi) \, d\xi \\
&\ge \int_{B_\varepsilon} \widehat \psi(\xi) \widehat{f}(\eta-\xi) \, d\xi \ge C\jp{\eta}^{-n/2-\gamma}.
\end{align*}
As a consequence we have that $\psi f\notin W^{\alpha,2}(\RR^n)$ for $\alpha\ge\gamma$, which implies that $f\notin W_{loc}^{\alpha,2}(\RR^n)$ by  definition of the local Sobolev spaces.
\end{proof}
\begin{proof}[Proof of Theorem $\ref{teo:Q2count}$]
 By Lemma \ref{lemma:gbeta} the function  $g_\beta$ satisfies all the conditions necessary to apply Lemma \ref{lemma.example.backs}, so for $\eta$ large we have
\begin{equation}\label{eq.max1}
 S(g_\beta)(\eta) \ge C \max\left( \jp{\eta}^{-\beta-n/2 -1} ,\jp{\eta}^{-2\beta-2 }\right ).
 \end{equation}
By (\ref{eq:Q2}), we have that
\begin{equation}\label{eq:Qg} \widehat{Q_{2}(g_\beta)}(\eta) = P(S_r(g_\beta))(\eta) + i\pi S(g_\beta)(\eta),
\end{equation}
and $\widehat{g}_\beta$ is real, so $P(S_r(g_\beta))$ and $S(g_\beta)$ are real functions of $\eta$ also. This means that if we assume $Q_{2}(g_\beta) \in W_{loc}^{\alpha,2}(\RR^n)$, we must have $\mathcal F^{-1} (S(g_\beta)) \in W_{loc}^{\alpha,2}(\RR^n) $, since there are no possible cancellations  between the real and imaginary  parts in (\ref{eq:Qg}). 

As a consequence of (\ref{eq.max1}), applying Lemma \ref{lemma.Wloc} with $f= \mathcal F^{-1} (S(g_\beta))$ we obtain that $\alpha$ must satisfy  simultaneously $\alpha< \beta+1$ and  $\alpha< 2\beta+ (n-4)/2$.

Hence, we have shown that for every $0<\beta<\infty$ there is a radial, real and compactly supported function $g_\beta$ such that $g_\beta \in W^{\gamma,2}$ if and only if $\gamma<\beta$, but we have that $ Q_2(g_\beta) \in W_{loc}^{\alpha,2}(\RR^n)$ only if $\alpha< \min(\beta + 1, 2\beta  -(n-4)/2)$. This enough to conclude the proof.
 \end{proof}

%%%%%%%%%%%%%%%%%%%%%%%%%%%%%%%%%%%%%%%%%%%%%%%%%%%%%%%%%%%%%%%%%%%%%%%
%%%%%%%%%%%%%%%%%%%%%%%%%%%%%%%%%%%%%%%%%%%%%%%%%%%%%%%%%%%%%%%%%%%%%%%
%%%%%%%%%%%%%%%%%%%%%%%%%%%%%%%%%%%%%%%%%%%%%%%%%%%%%%%%%%%%%%%%%%%%%%%

\section{Further Remarks}

In the introduction we have seen that there is a gap betwen the negative and positive results of recovery of singularities given in Theorems \ref{teo:main1} and   \ref{teo:main2}. This is a consequence of Theorems \ref{teo.Qj} and \ref{teo:Q2count}, where essentially the same gap is manifested in the results concerning the  $Q_2$ operator.  It  appears in the range   ${(n-4)/2 < \beta< (n-2)/2}$ (see for example figure \ref{fig.dim2y3} for the case $n=4$). What happens in this range is not known except for some partial results in dimension 2 and 3. In \cite[Proposition 3.1]{RV} a $1/2^-$ derivative gain for $\beta < 1/2$ is given    in dimension 3  using finer properties of the structure of the Ewald spheres. In this work, thanks to the trace theorem, we have not used any special properties of the spheres $\Gamma_r(\eta)$ in the estimates of the $Q_2$ operator. This suggest that there is an opportunity to improve the positive results in order to narrow this gap. Another possible strategy that we have already mentioned,  is to choose a weaker scale for measuring the regularity of the $Q_2(q)$ operator. This is the approach of \cite{BFRV13} in dimension 2, where they show that, if $\Lambda^\alpha(\RR^2)$ denotes the Hölder class and $q\in W^{\beta,2}(\RR^2)$, $\beta\ge 0$, then, modulo a $C^\infty$ function, $q-q_B \in \Lambda^\alpha(\RR^2)$  for every $\alpha<\beta$. This is a  $1^-$ derivative gain in the sense of integrability. A similar result holds also in dimension $n\ge 3$ and this will be the subject of a forthcoming work.

 A similar problem is what happens in the limiting case $\alpha =  \beta + 1$, when $\beta \ge (n-1)/2$. It is not difficult to show, modifying slightly the proof of lemma \ref{lemma.K.Q2}, that  there is a whole 1 derivative gain when $\beta > (n-1)/2$  for the spherical operator $S_r$. Unfortunately, it is not possible to say the same about the principal value operator, since in the estimate of the $P_{G,B}$ term in Proposition \ref{prop:genPV} is necessary to sacrifice an $\varepsilon$ of the regularity of the spherical operator (hence the final estimate for $\alpha'<\alpha$). Also, since this term involves cancellations,  it is difficult to determine if this is a limitation of the techniques, or if it is possible to construct a counterexample.

\section*{Aknowledgments}

I am very grateful to  my PhD advisors Alberto Ruiz  and Juan Antonio Barceló for their invaluable advice and constant support during the development of this work. 

The author was supported by Spanish government predoctoral grant BES-2015-074055 (project MTM2014-57769-C3-1-P).

\appendix

 \section{Some technical results} 
 
 \setcounter{equation}{0}
 
 \renewcommand{\theequation}{\Alph{section}.\arabic{equation}}
 
In this section we give the proofs of some technical results used throughout this work. 

\begin{lemma}[Trace theorem] \label{lemma.trazas2}
Let $\xi_1,\dots ,\xi_k \in \RR^n$, and $\mb r \in (0,\infty)^k $.  Assume that for every $\eta \in \RR^n$ the function $F(\xi_1,\xi_2,...,\xi_k,\eta)$ is $C^1$  in the first $k$ variables. Then if $\alpha_1,\dots,\alpha_k$ are  multi indices corresponding  to the variables $\xi_1,\dots ,\xi_k$ we have that
\begin{align*}
\nonumber &\int_{\Gamma_{\mb r}(\eta)}  |F(\xi_1,...,\xi_k,\eta)|^2 \,\, d\sigma_{\mb r}(\xi_1,\dots,\xi_k)   \\
 &\le C \sum_{0\le |\alpha_1|,\dots, |\alpha_k|\le 1} \int_{\RR^n} \dots  \int_{\RR^n} |\partial_{\xi_1}^{\alpha_1}\dots \partial_{\xi_k}^{\alpha_k} F(\xi_1,...,\xi_k,\eta)|^2 \,d \xi_1 \dots \, d \xi_k ,
\end{align*}
where the constant $C$ does not depend on $\eta$, or  $\mb r$.
\end{lemma}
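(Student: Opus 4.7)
The plan is to prove this by induction on $k$, iterating the standard trace theorem for a single sphere with a crucial radius-independent constant. Conceptually, since $\Gamma_{\mb r}(\eta) = \Gamma_{r_1}(\eta)\times\dots\times\Gamma_{r_k}(\eta)$ is a product manifold, the natural strategy is to control one spherical integral at a time, converting it into an $\RR^n$ volume integral, and then recurse.

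For the base case $k=1$, I apply the standard trace theorem from $H^1(\RR^n)$ to the sphere $\Gamma_{r_1}(\eta)$, viewing $F(\cdot,\eta)$ as a function of $\xi_1$. The key observation, established as \cite[Proposition A.1]{fix}, is that on spheres the trace constant may be chosen independent of the radius and the center; this is a scaling argument in which the dilation factors from the surface measure cancel with those from the $L^2$ norm of the gradient. Applied to $\Gamma_{r_1}(\eta)$, which has radius $r_1|\eta|/2$ and center $\eta/2$, this yields the estimate with $C$ independent of $\eta$ and $r_1$.

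For the inductive step, assume the result for $k-1$ and freeze $\xi_1,\ldots,\xi_{k-1}$ and $\eta$. Applying the $k=1$ estimate with respect to $\xi_k$ alone gives
\begin{equation*}
\int_{\Gamma_{r_k}(\eta)} |F|^2 \,\ds{r_k}(\xi_k) \le C \sum_{|\alpha_k|\le 1} \int_{\RR^n} |\partial_{\xi_k}^{\alpha_k} F|^2 \, d\xi_k,
\end{equation*}
with constant independent of $r_k$ and $\eta$. I then integrate both sides against $\ds{r_1}(\xi_1)\cdots \ds{r_{k-1}}(\xi_{k-1})$, use Tonelli's theorem (the integrands are nonnegative) to push the $\xi_k$-integration to the outside on the right-hand side, and apply the inductive hypothesis to each resulting integral $\int_{\Gamma_{r_1}(\eta)\times\dots\times\Gamma_{r_{k-1}}(\eta)} |\partial_{\xi_k}^{\alpha_k} F|^2 \, \ds{r_1}\cdots \ds{r_{k-1}}$, now viewed as a function of the remaining $k-1$ spherical variables. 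This produces the desired bound, with the sum collapsing to range over all multi-indices $(\alpha_1,\ldots,\alpha_k)$ with $|\alpha_i|\le 1$ and a final constant $C^k$ that depends only on $n$ and $k$.

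The only delicate point is the radius-independence of the trace constant at each step; this is cited from \cite[Proposition A.1]{fix}. Without it, the iteration would pick up factors of $r_i|\eta|/2$ at each stage, which would be fatal for the applications in Sections~\ref{sec.Q2L2} and \ref{sec:Qj}. The remainder of the argument—inductive bookkeeping, Tonelli's theorem, and using the $C^1$ regularity in the first $k$ variables to guarantee well-defined traces on each sphere—is routine.
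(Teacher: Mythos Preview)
Your proposal is correct and follows essentially the same approach as the paper: induction on $k$ using the single-sphere trace inequality with radius-independent constant from \cite[Proposition A.1]{fix}. The paper's proof is terser, simply stating that the general case follows inductively from that estimate, while you have spelled out the inductive step with Tonelli's theorem and the bookkeeping on multi-indices.
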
 
\begin{proof}
The general case follows inductively using that, as mentioned in the proof of Lemma \ref{lemma.K.Q2},  the estimate
\begin{equation} \label{eq:traza}
\int_{\SP_\rho} |f(x)|^2 \, d\sigma_\rho(x) \le  \int_{\RR^n} |f(x)|^2 \, dx + \int_{\RR^n} |\nabla f(x)|^2 \, dx ,
\end{equation}
  holds for any sphere  $\SP_\rho \subset  \RR^n$ of radius $\rho$ and (Lebesgue) measure $\sigma_\rho$, see { \cite[Proposition A.1]{fix}} for an elementary proof of this result.
\end{proof}

We now give the proof of the density lemma from which Lemma \ref{lemma:density}  follows directly.
\begin{lemma} \label{lemma:densityMain}
Let $X,Y$ be   Banach spaces, and let $D\subset Y$ be a dense subspace. Consider an operator  $T:D \longrightarrow X$ such that $T$ is the restriction to the diagonal of a multlinear operator of order $j$. That is, assume that there is some $Q: D\times, \dots,\times D \longrightarrow X$ multilinear such that for every $f\in D$, $T(f) = Q(f,\dots,f)$. Then, if for every $f\in D$
\begin{equation} \label{eq:XY}
 \lVert T(f) \rVert_X \le C \lVert f \rVert_Y^k,
\end{equation}  
 we have that there is a unique continuous extension of $T$ to the whole space $Y$, and it satisfies the estimate \eqref{eq:XY} for every $f\in Y$.
\end{lemma}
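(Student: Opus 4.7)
The plan is to use symmetrization together with the polarization identity for multilinear forms, and then upgrade to a product estimate via rescaling.

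First, I would symmetrize $Q$ to exploit the fact that $T$ only sees the diagonal. Define
\[
\widetilde{Q}(f_1,\ldots,f_j) := \frac{1}{j!}\sum_{\pi\in S_j} Q(f_{\pi(1)},\ldots,f_{\pi(j)}),
\]
so that $\widetilde{Q}: D^j \to X$ is symmetric and $j$-linear, with $\widetilde{Q}(f,\ldots,f)=Q(f,\ldots,f)=T(f)$ for all $f\in D$. Then apply the standard polarization identity for symmetric multilinear forms,
\[
\widetilde{Q}(f_1,\ldots,f_j) = \frac{1}{j!\,2^j}\sum_{\epsilon\in\{-1,+1\}^j} \epsilon_1\cdots\epsilon_j\, T\!\left(\sum_{i=1}^j \epsilon_i f_i\right),
\]
which follows by expanding the right-hand side, writing $T(\sum\epsilon_i f_i)=\widetilde{Q}(\sum\epsilon_i f_i,\ldots,\sum\epsilon_i f_i)$, and noting that only the $j!\cdot 2^j$ tuples coming from permutations contribute. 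Combined with the hypothesis $\|T(g)\|_X\le C\|g\|_Y^j$, this immediately gives a sum-type bound
\[
\|\widetilde{Q}(f_1,\ldots,f_j)\|_X \le C_1 \Bigl(\sum_{i=1}^j \|f_i\|_Y\Bigr)^j.
\]

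Next I would upgrade this to the natural product estimate. For any $\lambda_i>0$, multilinearity yields $\widetilde{Q}(f_1,\ldots,f_j)=\prod_i\lambda_i^{-1}\,\widetilde{Q}(\lambda_1 f_1,\ldots,\lambda_j f_j)$, so the sum bound applied to $(\lambda_i f_i)$ gives $\|\widetilde{Q}(f_1,\ldots,f_j)\|_X\le C_1 \prod_i\lambda_i^{-1}\bigl(\sum_i\lambda_i\|f_i\|_Y\bigr)^j$. Optimizing by setting $\lambda_i=\|f_i\|_Y^{-1}$ when $f_i\ne 0$ (and handling zero components by continuity) produces
\[
\|\widetilde{Q}(f_1,\ldots,f_j)\|_X \le C_2 \prod_{i=1}^j \|f_i\|_Y.
\]
By density of $D$ in $Y$, this product estimate lets $\widetilde{Q}$ extend uniquely to a bounded symmetric $j$-linear map $Y^j\to X$ with the same constant.

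Finally, define the proposed extension $\widetilde{T}:Y\to X$ by $\widetilde{T}(f):=\widetilde{Q}(f,\ldots,f)$. It agrees with $T$ on $D$ and satisfies $\|\widetilde{T}(f)\|_X\le C_2\|f\|_Y^j$. Continuity of $\widetilde{T}$ follows by the telescoping identity
\[
\widetilde{T}(g)-\widetilde{T}(h) = \sum_{k=1}^j \widetilde{Q}\bigl(\underbrace{h,\ldots,h}_{k-1},\,g-h,\,\underbrace{g,\ldots,g}_{j-k}\bigr),
\]
each summand of which is controlled by $C_2\|h\|_Y^{k-1}\|g-h\|_Y\|g\|_Y^{j-k}$. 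Uniqueness is automatic: any continuous map $Y\to X$ agreeing with $T$ on the dense subspace $D$ is determined everywhere.

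The only delicate step is the polarization identity, which is classical but requires careful combinatorial bookkeeping; the rescaling trick to pass from the sum bound to the product bound, and the handling of vanishing components, are routine. Once the product estimate is in hand, density does all the remaining work.
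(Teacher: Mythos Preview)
Your argument is correct and uses the same three ingredients as the paper's proof---symmetrization, the polarization identity, and a rescaling trick---but you assemble them in a different order. The paper works directly with a Cauchy sequence $\{g_i\}\subset D$: it first applies the telescoping identity to $T(g_k)-T(g_l)$, then polarizes each resulting off-diagonal term $Q(g_l,\dots,g_l,g_k-g_l,g_k,\dots,g_k)$ back into diagonal evaluations of $T$, and finally rescales by a small parameter $h$ (placing $h^{-(j-1)}$ on the difference slot and $h$ on the others) to force smallness. You instead first polarize to get a sum bound on $\widetilde{Q}$, then optimize the rescaling to get the product estimate $\lVert\widetilde{Q}(f_1,\dots,f_j)\rVert_X\le C_2\prod_i\lVert f_i\rVert_Y$, and only then extend and telescope. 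Your route is slightly more informative, since it actually produces a bounded extension of the full symmetric multilinear map $\widetilde{Q}$ to $Y^j$, not just of its diagonal restriction; the paper's argument stays at the level of the diagonal throughout. One small remark: to recover the original constant $C$ in \eqref{eq:XY} (rather than $C_2$), pass to the limit in $\lVert T(f_n)\rVert_X\le C\lVert f_n\rVert_Y^j$ along an approximating sequence $f_n\to f$, using the continuity you established.
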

\begin{proof}
Let $\{g_i\}_{i \in \mathbb{N}}$, $g_i \in D$ for every $i \in \mathbb{N}$, be a Cauchy sequence in the $Y$ norm. To prove the proposition it is enough to show that then $\{T(g_i)\}_{i \in \mathbb{N}}$ is also a Cauchy sequence in $X$, since this implies that there must be a unique continuous extension of $T$ to the whole space $Y$.

Without loss of generality we can consider $Q$ symmetric, since otherwise we can take its symmetric part:
\[{Q}_{S}(f_1,\dots,f_j): = \frac{1}{j!} \sum_{\sigma} {Q}(f_{\sigma(1)},\dots,f_{\sigma(j)}),\]
where the sum is over all the permutations $\sigma$ of $j$ elements.  Therefore using the symmetry and the multilinearity  we have
\begin{multline} \label{eq:dens1}
T(g_k)-T(g_l) =   Q(g_k-g_l,g_k,\dots,g_k) + Q(g_l,g_k-g_l,g_k,\dots,g_k) \\
 +  Q(g_l,\dots,g_l,g_k-g_l) .
\end{multline}
We can now use a polarization identity for multilinear operators to express each of the previous terms as combinations of diagonal terms. See \cite{pol} for the explicit derivation of the identity:
\[j!Q(f_1,\dots,f_k)= \sum_{m=1}^j(-1)^{j-m} \sum_{J,|J|=m} T(S_J),\]
where the inner sum in the right hand side is over all distinct subsets $J \subset \{1,2,\dots,j\}$ of $m$ elements, and $S_J= \sum_{i\in J} f_i$.
Since each term in the last line of (\ref{eq:dens1}) can be treated in the same way, we illustrate only one case.
Let  $h>0$ be a (small) constant that we will choose later. Then  the polarization identity can be written in the following way
\begin{multline*}
 Q(g_l,g_k-g_l,g_k,\dots,g_k) = Q(hg_l,h^{-(j-1)}(g_k-g_l),hg_k,\dots,hg_k)\\
 = \frac{1}{j!}\sum_{0<a+b+c \le j} (-1)^{j-1-(a+b+c)} N(a,b,c)T( ah^{-(j-1)}(g_l-g_k) + h(bg_l+cg_k)),
\end{multline*}
where $ a,b, c$ are integers satisfying $0 \le a,b \le 1$, since $g_l$ and $g_l-g_k$ appear only once in the term we have chosen, and $0 \le c \le j-2$ since $g_k$ appears $j-2$ times (the integer coefficient $N(a,b,c)$ is just to account for repetitions).

Since $\{g_i\}$ is a Cauchy sequence, it is bounded, so $\lVert g_i \rVert_Y \le M$ for every $i \in \mathbb{N}$ and some constant $M>0$. Hence, taking the $X$ norm and using estimate (\ref{eq:estref}) we obtain
 \begin{align}
\nonumber  \lVert Q(g_l,&g_k-g_l,g_k,\dots,g_k) \rVert_X \le \\
 \nonumber &= \frac{1}{j!}\sum_{0<a+b+c \le j}  N(a,b,c)  \lVert  T( ah^{-(j-1)}(g_l-g_k) + h(bg_l+cg_k)) \rVert_X\\
\label{eq:dens2}& \le C(j)\left ( h^{-j(j-1)} \lVert g_l-g_k \rVert_{Y}^j + h^{j}M^j \right) <{\varepsilon}/{j},
\end{align}
for the following choices ($C(j)$ is some constant dependent only on $j$), 
\[h^j < \frac{\varepsilon}{2jC(j)M^j}, \, \, \, \text{and} \, \, \, \lVert g_l-g_k \rVert_Y < \frac{\varepsilon}{2jC(j)M^{j-1}}. \]
So, using (\ref{eq:dens2}) for each term in (\ref{eq:dens1}) we finally obtain
\[\lVert T(g_k)-T(g_l) \rVert_X <\varepsilon,\] 
which shows that $\{T(g_i)\}_{i\in \mathbb{N}}$ is a Cauchy sequence in $X$.
\end{proof}

\begin{proposition} \label{prop:smooth} Let $f \in \mathcal S$ and $1\le k \le j$. Then we have that
$$T_k(f)(\eta) = \prod_{i=1}^{k-1} (i\pi d_i + P_i) \widetilde S_{j, \mb r}(f)(\eta),$$
is a well defined function in $\mathcal S \left((0,\infty)^{j-k}\times \RR^n \right)$. Moreover, for $k \le  m \le j-1$
\begin{equation} \label{eq:comm}
\prod_{i=1}^{k-1} (i\pi d_i + P_i) \partial_{r_m} \widetilde S_{j, \mb r}(f)(\eta) = \partial_{r_m}\prod_{i=1}^{k-1} (i\pi d_i + P_i)  \widetilde S_{j, \mb r}(f)(\eta).
\end{equation}
\end{proposition}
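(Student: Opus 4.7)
The plan is to prove both assertions by induction on $k$, exploiting the fact that for $f\in\mathcal{S}(\RR^n)$, $\widehat{f}$ is Schwartz, so the spherical integrand in (\ref{eq:Sj}) is very regular. First I would parametrize each Ewald sphere via $\xi_i = \eta/2 + (r_i|\eta|/2)\theta_i$ with $\theta_i \in \SP^{n-1}$, as was done in (\ref{eq:derivative1}) for $j=2$. This rewrites $\widetilde S_{j,\mb r}(f)(\eta)$ as
\begin{equation*}
\chi(\eta)\left(\prod_{i=1}^{j-1} \frac{r_i^{n-1}|\eta|^{n-2}}{2^{n-2}(1+r_i)}\right)\int_{(\SP^{n-1})^{j-1}} H(\mb r,\boldsymbol\theta,\eta)\,d\sigma(\theta_1)\cdots d\sigma(\theta_{j-1}),
\end{equation*}
where $H$ is a product of evaluations of $\widehat{f}$ at arguments depending smoothly on $(\mb r,\boldsymbol\theta,\eta)$. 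Since the domain $(\SP^{n-1})^{j-1}$ is compact and $\widehat f$ is Schwartz, differentiation under the integral sign shows that $\widetilde S_{j,\mb r}(f)$ is $C^\infty$ in $(\mb r,\eta)$ on $(0,\infty)^{j-1}\times\RR^n$, with Schwartz decay in $\eta$ uniformly on compact subsets of $(0,\infty)^{j-1}$. Moreover, the prefactor $r_i^{n-1}/(1+r_i)$ gives polynomial decay as $r_i\to 0$, while for $r_i\to\infty$ at least one argument of $\widehat{f}$ (either $\xi_{i-1}-\xi_i$ or $\xi_i-\xi_{i+1}$) has magnitude $\sim r_i|\eta|$, so the Schwartz decay of $\widehat f$ forces rapid decay of $\widetilde S_{j,\mb r}(f)$ in each $r_i$ as well.

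Next I would show that each operator $(i\pi d_i + P_i)$ preserves this class of functions. For $d_i$, which simply evaluates at $r_i=1$, the conclusion is immediate. For $P_i$, I would use the decomposition from the proof of Proposition \ref{prop:genPV}:
\begin{equation*}
P_i(G)(\cdots) = \int_{|1-r_i|<\delta}\frac{G(r_i,\cdots) - G(1,\cdots)}{1-r_i}\,dr_i + \int_{|1-r_i|\ge\delta}\frac{G(r_i,\cdots)}{1-r_i}\,dr_i.
\end{equation*}
The first integrand extends continuously to $r_i=1$ by the fundamental theorem of calculus (using the $C^\infty$ regularity of $G$ in $r_i$), and both integrals converge absolutely thanks to the decay of $G$ in $r_i$ established above. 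Differentiation under the integral sign in the remaining variables (including $\eta$ and the other $r_l$'s) is justified by dominated convergence, using the same uniform decay bounds for the derivatives of $G$. This inductive step yields that $T_k(f)$ lies in the natural Schwartz-type class on $(0,\infty)^{j-k}\times\RR^n$.

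Finally, the commutation identity (\ref{eq:comm}) is a corollary of the same estimates: since $\partial_{r_m}\widetilde S_{j,\mb r}(f)$ for $m\ge k$ satisfies the same smoothness and $r$-decay properties as $\widetilde S_{j,\mb r}(f)$ itself (a direct computation analogous to (\ref{eq:derivative2}) and (\ref{eq:lastlemm})), the derivative $\partial_{r_m}$ passes through each integral defining $d_i$ and $P_i$ (for $i<k\le m$, hence $i\ne m$) by dominated convergence. The main technical obstacle is tracking the $r_i$-decay uniformly after each application of $P_i$: the singular weight $(1-r_i)^{-1}$ could a priori damage the rapid decay, but this is controlled by observing that the tail integral $\int_{|1-r_i|>\delta}G/(1-r_i)\,dr_i$ inherits its $r_l$-decay ($l\ne i$) and $\eta$-decay directly from $G$, while the compact integral $\int_{|1-r_i|<\delta}$ is bounded and smooth in all remaining variables. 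Hence iterating yields the desired smoothness throughout.
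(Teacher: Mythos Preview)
Your proposal is correct and follows essentially the same approach as the paper: both use the spherical parametrization $\xi_i=\eta/2+(r_i|\eta|/2)\theta_i$ to establish Schwartz regularity of $\widetilde S_{j,\mb r}(f)$, control the seminorms by absorbing weights into the Schwartz decay of $\widehat f$ (the paper phrases the $r_i$-decay via $r_i\le C(1+|\xi_i|)$ for $|\eta|>C_0$, which is your ``one argument has magnitude $\sim r_i|\eta|$'' observation), and then show inductively that $i\pi d_i+P_i$ preserves this class with commutation of $\partial_{r_m}$. The only cosmetic difference is that where you split $P_i$ explicitly into the near/far pieces from Proposition~\ref{prop:genPV}, the paper packages the same argument as the single estimate $\|P_i(g)\|\le C(\|\partial_{r_i}g\|_\infty+\|\langle r_i\rangle^\varepsilon g\|_\infty)$.
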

\begin{proof}
We only sketch some of the main computations. Let's verify that $\widetilde S_{j\mb r}(f)$ is a function in $\mathcal S \left((0,\infty)^{j-1} \times \RR^n \right)$ (that is, the case $k=0$). First any derivative in the $\eta$ or $\mb r$ variables can be computed as in (\ref{eq:derivative1}) (see also  (\ref{eq:lastlemm})). The $|\eta|$ factors appearing in the expression of $\widetilde S_{j,\mb r}$ and its derivatives are non-smooth for $\eta = 0$, but this is not a problem since we have the smooth cut-off $\chi(\eta)$ which vanishes  in the origin. Essentially the estimate of each Schwartz class seminorm  can be reduced to the basic case
\begin{align*}
&\jp{\eta}^\gamma  \int_{\Gamma_{\mb r}(\eta)} |f(\eta -\xi_1)| \left(\prod_{i=1}^{j-2} |f(\xi_i - \xi_{i+1}) |\right) |f(\xi_{j-1})| \, d\sigma_{\mb r} \le \frac{C}{|\eta|^{n-1}} \dots\\ 
 &\int_{\Gamma_{\mb r}(\eta)} |f(\eta -\xi_1)|\jp{\eta-\xi_1}^{\gamma'} \left(\prod_{i=1}^{j-2} |f(\xi_i - \xi_{i+1})|\jp{\xi_i-\xi_{i+1}}^{\gamma'} \right) |f(\xi_{j-1})|\jp{\xi_{j-1}}^{\gamma'} d\sigma_{\mb r} \\
 &\le \norm{f\jp{\cdot}^{\gamma'}}_{\infty}^j,
\end{align*}
where $\gamma'= \gamma + n-1$. If instead of a weight $\jp{\eta}^\gamma $ we have $\jp{\mb r}^\gamma$, an analogous procedure can be followed using that $r_i = 2|\xi_i -\eta/2|/|\eta| \le C(1 + |\xi_i|)$ for $|\eta| >C_0$, that is, where the cut-off $\chi(\eta)$ does not vanish.

We give the following indications to  prove (\ref{eq:comm}) and that $T_k(f) \in \mathcal S \left((0,\infty)^{j-1-k}\times \RR^n \right)$ for $k>0$. Let $g \in \mathcal S( (0,\infty)^{k} )$ be a function of the variable $\mb r \in (0,\infty)^{k}$.  Is not very difficult to bound the principal value operators in the Schwartz class since we have the estimate
$$\norm{P_i(g) } \le C (\norm{\partial_{r_i} g}_\infty + \norm{\jp{r_i}^{\varepsilon}g }_\infty),$$
 for $\varepsilon>0$. This implies that $\partial_{r_m} P_i(g) = P_i(\partial_{r_m} g)$ since the limit that defines the derivative $\partial_{r_m}$ is continuous in the norms of the right hand side  (this is a consequence of the mean value theorem together with the fact that $g\in \mathcal \mathcal S( (0,\infty)^{k} )$ which means that all the derivatives are uniformly bounded). The same reasoning can be applied to control the partial derivatives in $\eta$ of $T_k(f)$.
\end{proof}

We state now Lemma \ref{lemma.LapFracL1}, used in the proof of Proposition \ref{prop:convergence}.

\begin{lemma} \label{lemma.LapFracL1}
Let $f,g\in C^\infty_c(\RR^n)$, then if $\beta \ge 0$ we have that
$$\norm{(-\Delta)^{\beta/2}(fg)}_{L^1} \le C(\beta) \norm{f}_{W^{\beta,2}} \norm{g}_{W^{\beta,2}} .$$
\end{lemma}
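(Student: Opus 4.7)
My plan is to establish the estimate as an instance of the endpoint Kato--Ponce (fractional Leibniz) inequality
\[
\|(-\Delta)^{\beta/2}(fg)\|_{L^{1}}\lesssim\|(-\Delta)^{\beta/2}f\|_{L^{2}}\|g\|_{L^{2}}+\|f\|_{L^{2}}\|(-\Delta)^{\beta/2}g\|_{L^{2}},
\]
valid for $\beta>0$. Since $\||\xi|^{\beta}\hat h\|_{L^{2}}\le C_{\beta}\|\langle\xi\rangle^{\beta}\hat h\|_{L^{2}}=C_{\beta}\|h\|_{W^{\beta,2}}$, the stated bound follows from this inequality directly.

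To prove the Kato--Ponce bound I would first handle the case of even-integer $\beta$ by direct computation. For $\beta=2k$, I would expand $(-\Delta)^{k}=\sum_{|\alpha|=k}c_{\alpha}\partial^{2\alpha}$ and apply the classical Leibniz rule to each $\partial^{2\alpha}(fg)$. This yields a finite sum of products $\partial^{\gamma}f\cdot\partial^{2\alpha-\gamma}g$ with $|\gamma|,|2\alpha-\gamma|\le 2k$, and Cauchy--Schwarz in $L^{1}$ gives $\|\partial^{\gamma}f\cdot\partial^{2\alpha-\gamma}g\|_{L^{1}}\le\|\partial^{\gamma}f\|_{L^{2}}\|\partial^{2\alpha-\gamma}g\|_{L^{2}}\le\|f\|_{W^{2k,2}}\|g\|_{W^{2k,2}}$, as required.

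For general real $\beta\ge 0$ I would fix an even integer $2k>\beta$ and apply Stein's interpolation theorem to the analytic bilinear family
\[
T_{z}(f,g)=e^{z(z-2k)}(-\Delta)^{z/2}(fg),\qquad 0\le\Re z\le 2k,
\]
whose Gaussian-type prefactor ensures admissible growth of the operator norm along vertical lines. The right edge $\Re z=2k$ is handled by the integer case combined with the $L^{2}$-boundedness of the imaginary-power multiplier $(-\Delta)^{i\Im z/2}$ (whose norm grows only polynomially in $|\Im z|$). On the left edge $\Re z=0$, one must bound $e^{-t^{2}}(-\Delta)^{it/2}(fg)$ in $L^{1}$; since $fg$ has compact support (inherited from $f,g\in C_{c}^{\infty}$), Cauchy--Schwarz on a bounded ball converts the $L^{2}$ bound into $L^{1}$ control there, while the integrable far-field decay of the convolution kernel of $(-\Delta)^{it/2}$ (as a tempered distribution, applied to the compactly supported $fg$) controls the remaining tail.

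The main obstacle is precisely this $L^{1}$ endpoint handling in the interpolation, since imaginary powers of the Laplacian are not bounded on $L^{1}$; the compact-support hypothesis is essential to bridge the $L^{2}$-theory of imaginary powers and the $L^{1}$-target. An alternative route avoiding complex interpolation is to decompose $fg$ into Bony paraproducts $fg=\pi_{f}g+\pi_{g}f+R(f,g)$: each paraproduct is a bilinear Fourier multiplier essentially concentrated at its high-frequency factor, so $(-\Delta)^{\beta/2}\pi_{f}g\sim\pi_{f}((-\Delta)^{\beta/2}g)$, which one bounds in $L^{1}$ by pointwise Cauchy--Schwarz combined with the Littlewood--Paley square-function characterization of $L^{2}$; the remainder term is handled analogously.
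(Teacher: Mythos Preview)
Your reduction to the endpoint Kato--Ponce inequality is a legitimate route in principle, and your treatment of the even-integer case is correct and coincides with the paper's. However, your complex-interpolation argument for non-integer $\beta$ has a real gap. The obstruction is not only on the left edge: on \emph{both} edges you must apply $(-\Delta)^{it/2}$ and land in $L^1$, and this fails. The kernel of $(-\Delta)^{it/2}$ is (distributionally) a multiple of $|x|^{-n-it}$, whose modulus $\sim |x|^{-n}$ is \emph{not} integrable at infinity, so there is no ``integrable far-field decay'' to invoke. Concretely, if $\int fg\neq 0$ then $|\xi|^{it}\widehat{fg}(\xi)$ is discontinuous at $\xi=0$, hence not the Fourier transform of an $L^1$ function; thus $(-\Delta)^{it/2}(fg)\notin L^1$ in general, regardless of the compact support of $fg$. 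The same problem recurs on the right edge: $(-\Delta)^{it/2}\bigl[(-\Delta)^k(fg)\bigr]$ is not in $L^1$, so the $L^2$-boundedness of imaginary powers does not help. Your paraproduct sketch is closer to a working proof of the $L^1$-endpoint fractional Leibniz rule (which is indeed known, e.g.\ Grafakos--Oh), but as written it is too vague; in particular the step ``$(-\Delta)^{\beta/2}\pi_f g\sim \pi_f((-\Delta)^{\beta/2}g)$ bounded in $L^1$ by pointwise Cauchy--Schwarz'' needs care, since the low-frequency factors $S_{j-2}f$ are not square-summable in $j$.

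The paper avoids all of this with a much more elementary argument. For $0<\beta<2$ it uses the pointwise identity
\[
(-\Delta)^{\beta/2}(fg)(x)=f(x)(-\Delta)^{\beta/2}g(x)+g(x)(-\Delta)^{\beta/2}f(x)+\int_{\RR^n}\frac{(f(x)-f(y))(g(x)-g(y))}{|x-y|^{n+\beta}}\,dy,
\]
which follows directly from the singular-integral definition of the fractional Laplacian. The first two terms are handled by Cauchy--Schwarz in $L^1$; the bilinear remainder, after Cauchy--Schwarz in both variables, is bounded by $\|f\|_{\dot W^{\beta/2,2}}\|g\|_{\dot W^{\beta/2,2}}$ via the Gagliardo seminorm characterisation. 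For $\beta\ge 2$ one writes $(-\Delta)^{\beta/2}=(-\Delta)^{\tilde\beta/2}(-\Delta)^k$ with $k=[\beta/2]$ and $\tilde\beta\in[0,2)$, expands $(-\Delta)^k(fg)$ by the classical Leibniz rule, and applies the previous case to each product $\partial^a f\cdot\partial^b g$. This is self-contained and entirely avoids the $L^1$-endpoint difficulties of imaginary powers or paraproducts.
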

\begin{proof}
Assume first that $ 0 < \beta < 2 $ (the case of $\beta=0$ is trivial). Then we have the pointwise relation (it can be computed by hand using the principal value formula of the fractional Laplacian see for example \cite[p. 636]{soria})
\begin{align*}
(-\Delta)^{\beta/2} (fg)(x) = f(x)(-\Delta)^{\beta/2}(g)(x) &+ g(x) (-\Delta)^{\beta/2}(f)(x) \\
&+  \int_{\RR^n} \frac{(f(x)-f(y))(g(x)-g(y))}{|x-y|^{n+\beta}} \,dy,
\end{align*}
where we need $0 < \beta < 2 $ so that the singularity in the last integral can be controlled. Since by (\ref{eq:fracfourier}) we have that $\norm{(-\Delta)^{\beta/2} u}_{L^2} \le  \norm{u}_{W^{\beta,2}},$
 taking the $L^1(\RR^n)$ norm and applying Cauchy-Schwarz inequality to the first two terms we obtain
\begin{align*}
\norm{(-\Delta)^{\beta/2}(fg)}_{L^1} \le  2 &\norm{f}_{W^{\beta,2}} \norm{g}_{W^{\beta,2}} + \dots \\
 & \dots  \int_{\RR^n} \left| \int_{\RR^n} \frac{(f(x)-f(y))(g(x)-g(y))}{|x-y|^{n+\beta}} \,dy \right |\, dx.
\end{align*}
But the last can be bounded using Cauchy Schwarz and that
$$ \int_{\RR^n}  \int_{\RR^n} \frac{|f(x)-f(y)|^2}{|x-y|^{n+\beta}} \,dy \, dx \le C \norm{f}_{W^{\beta/2,2}}^2,$$
(in fact, the left hand side  is an equivalent norm for the homogeneous $\dot{W}^{\beta/2,2}$ when $0<\beta<2$, see \cite[Proposition 3.4]{valdinoci}). 

If we assume now that $\beta\ge 2$,  define $k := [\beta/2]$, that is the integer part, and  $\tilde \beta = \beta-2k$ so we have now $\tilde{\beta}\in [0,2)$, and
\begin{equation} \label{eq.frac1}
(-\Delta)^{\beta/2} (fg) = (-\Delta)^{\tilde{\beta}/2}(-\Delta)^k (fg).
\end{equation}
An integer power of the Laplacian is an homogeneous constant coefficient differential operator of order $2k$, and therefore if $a,b\in \NN^{n}$ we have
$$(-\Delta)^k (fg) = \sum_{|a|+|b|=2k} c_{a,b}\partial^a f \partial^b g,$$
where we are not interested in the particular values of the constants $c_{a,b}$.  Then, to bound the $L^1(\RR^n)$ norm of (\ref{eq.frac1}) we can apply the same arguments as before so we obtain
$$\norm{(-\Delta)^{\beta/2}(fg)}_{L^1} \le \sum_{|a|+|b|=2k} |c_{a,b}|\norm{\partial^a f}_{W^{{\tilde\beta},2}} \norm{\partial^b g}_{W^{{\tilde\beta},2}} \le C \norm{f}_{W^{\beta,2}} \norm{g}_{W^{\beta,2}}, $$
using that $\norm{\partial^a f}_{W^{{\tilde\beta},2}} \le \norm{ f}_{W^{{\tilde\beta} + |a|,2}} \le \norm{f}_{W^{\beta,2}}$ since $|a|\le 2k$.
\end{proof}

  \bibliographystyle{myplainurl}
  \bibliography{Backscattering}

\end{document}